\documentclass[10pt,a4paper]{amsart}
\usepackage[utf8]{inputenc}
\usepackage[T1]{fontenc}
\usepackage{ mathrsfs }
\usepackage{url}
\usepackage{tikz}
\usepackage{tikz-cd}
\usepackage{amsmath}
\usepackage{amsthm}
\usepackage[none]{hyphenat}
\usepackage{pgfplots}
\usepgflibrary{shapes.misc} 
\pgfplotsset{width=11cm,compat=1.15}
\usepgfplotslibrary{fillbetween}
\usepackage{amsfonts}
\usepackage{amssymb}
\usepackage{graphicx}
\usepackage{xfrac}
\usepackage{mathtools}
\newcommand*{\vertbar}{\rule[-1ex]{0.5pt}{2.5ex}}
\newcommand*{\horzbar}{\rule[.5ex]{2.5ex}{0.5pt}}

\theoremstyle{plain}
\newtheorem{thm}{Theorem}[section]
\newtheorem{lem}[thm]{Lemma}
\newtheorem{prop}[thm]{Proposition}

\newtheorem{rmk}[thm]{Remark}
\theoremstyle{definition}
\newtheorem{dfn}[thm]{Definition}
\newtheorem{exmp}[thm]{Example}
\newtheorem{asmp}[thm]{Assumption}

\begin{document}
\title{Singular Lagrangian torus fibrations on the smoothing of algebraic cones}
\author{Santiago Achig-Andrango}
\address{Instituto de Matem\'atica Pura e Aplicada, Rio de Janeiro, Brazil}
\email{esantiag@impa.br} 
\maketitle
\begin{abstract}
	Given a lattice polytope $Q\subset \mathbb{R}^n$, we can consider the cone $\sigma=C(Q)=\{\lambda(q,1)\in \mathbb{R}^{n+1}|\lambda \in \mathbb{R}_{\geq0}, q\in Q\} \subset \mathbb{R}^{n+1}$, and the affine toric variety $Y_{\sigma}$ associated to $\sigma$. In \cite{altmann1997versal}, Altmann showed that the versal deformation space of $Y_\sigma$ can be described by the Minkowski decomposition of the polytope $Q$. Under some conditions on $Q$, we can obtain a smooth deformation $Y_\epsilon$ of $Y_\sigma$ using Altmann's result. In this article, we consider $Y_\epsilon$ inside some $\mathbb{C}^N$ and construct a complex fibration on $Y_\epsilon$, with general fibre $(\mathbb{C}^*)^n$ and finite singular fibres described using global coordinates related to the components of the Minkowski decomposition. We construct a singular Lagrangian torus fibration out of the complex fibration, as in \cite{auroux2007mirror, auroux2009special, gross2001examples, lau2014open}. This singular fibration admits a convex base diagram representation with cuts as a natural generalization of base diagrams described in \cite{symington71four} for Almost Toric Fibrations ($\dim=4$). In particular, we obtain a convex base diagram whose image is the dual cone of $C(Q)$. There is a 1-parameter family of monotone Lagrangian tori in each of these fibrations. Using the wall-crossing formula \cite{pascaleff2020wall}, we describe the potential associated with this family in terms of the Minkowski decomposition of $Q$, recovering the result of \cite{lau2014open}, and discuss non-displaceability. We also discuss some other consequences of our results.
\end{abstract}
	\section{Introduction}
	The SYZ conjecture \cite{strominger1996mirror} gives a geometric interpretation for the mirror symmetry phenomenon first observed in physics \cite{candelas1990calabi, greene1990duality, candelas1991pair}. It claims that if two Calabi-Yau manifolds are the mirror to each other, then both manifolds admit a special Lagrangian torus fibration over the same base and are dual to each other. Under this premise, several refinements, including the need to consider singular Lagrangian fibrations, were developed over the years in a nonexhaustive and long list of work, \cite{gross2001topological, kontsevich2001homological, fukaya2005multivalued, gross2015mirror, tu2014reconstruction, gross2006mirror, gross2011real, gross2018intrinsic, gross2019intrinsic, abouzaid2017family, abouzaid2021homological, auroux2007mirror, auroux2009special, chan2012syz, lau2014open,  abouzaid2020monotone,  chan2016lagrangian}.\\
	\begin{rmk}
		For the definition of a singular Lagrangian fibration, see Definition \ref{defsinglagfib}.
	\end{rmk}
	
	 In the present article, we describe the smoothing of certain Gorenstein singularities based on \cite{altmann1997versal}. We then construct a special Lagrangian torus fibration in the complement of a divisor as in \cite{auroux2007mirror, auroux2009special}. Some of the results and examples in this article are similar to those in \cite{chan2012syz, lau2014open}, which also studies Altmann's smoothing of cones and Lagrangian torus fibrations. However, the proofs of the main theorems are different. In particular, we use an embedding of the $Y_\epsilon$ in some $\mathbb{C}^N$ in order to describe a complex fibration and the singular Lagrangian fibration using global coordinates related to the components of the Minkowski decomposition. The author hopes that this description will help to study the Fukaya and Wrapped Fukaya category of $Y_\epsilon$ using a Morse-Bott-Lefschetz fibration as in \cite{chan2013homological, chan2016lagrangian,abouzaid2020monotone, keating2023symplectomorphisms}. One of the main contributions of the article is to give a local model for the base of the Lagrangian torus fibration similar to Symington's almost toric base diagrams. This paper provides some illustrative examples.
\subsection{Main Results} Given a lattice polytope $Q\subset \mathbb{R}^n$, we can consider $\sigma=C(Q)=\{\lambda(q,1)\in \mathbb{R}^{n+1}|\lambda \in \mathbb{R}_{\geq0}, q\in Q\}$, the cone of $Q$ in $\mathbb{R}^{n+1}$. Since $\sigma$ is a polyhedral rational cone, we can associate an algebraic toric variety $Y_{\sigma}$, singular in general. In \cite{altmann1997versal}, Altmann showed that the versal deformation space of $Y_\sigma$ can be described in terms of a Minkowski decomposition of the polytope $Q$. In this setup, Gross \cite{gross2001examples} gave a recipe to construct special Lagrangian fibrations, and in \cite{chan2012syz, lau2014open} the authors studied the special torus Lagrangian fibration with singularities resulting from \cite{gross2001examples}. This construction can be understood using a complex fibration $\pi: Y_\epsilon \to \mathbb{C}$, where the preimage of a point is isomorphic to $(\mathbb{C}^*)^n$ except at $k$ singular points, where $k$ is the number of components on the Minkowski decomposition. In Theorem \ref{theoremA}, under certain conditions, we describe the local behavior of the singularities of $\pi$, using global coordinates related to the Minkowski decomposition, which guarantees that $Y_\epsilon$ is smooth.
\begin{thm}\label{theoremA}
	Given some conditions in $Q$, a neighborhood of the singular fibres of $\pi:Y_\epsilon\to \mathbb{C}$ is biholomorphic to the preimage of a neighborhood of $0$ under the map  $x_0\cdots x_m$ defined in $\mathbb{C}^{m+1}\times (\mathbb{C}^*)^{n-m}$, for some $m$. In particular, the conditions imply that $Y_\epsilon$ is smooth.
\end{thm}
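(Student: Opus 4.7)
The plan is to work directly with Altmann's explicit embedding $Y_\epsilon\hookrightarrow\mathbb{C}^N$, where the coordinates on $\mathbb{C}^N$ correspond to lattice vectors packaged from the Minkowski decomposition $Q=Q_1+\cdots+Q_k$, and the defining equations are deformed binomial relations parametrized by $\epsilon$. The complex fibration $\pi:Y_\epsilon\to\mathbb{C}$ should be identified with the restriction of the monomial coordinate dual to the axis $(0,\ldots,0,1)\in\mathbb{R}^{n+1}$ of the cone $\sigma=C(Q)$. My first step is to locate the critical points of $\pi$: since the generic fibre is $(\mathbb{C}^*)^n$, and since the Altmann deformation replaces the single cone-point singularity of $Y_\sigma$ by $k$ smooth points, I expect $k$ singular fibres, one over each critical value $c_i$, and each containing a distinguished critical point $p_i$ naturally attached to a vertex of the summand $Q_i$.

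Next, I would perform a local analysis at a fixed $p_i$. Under the hypotheses on $Q$ -- which I expect to amount to each summand $Q_i$ being smooth at the vertex producing $p_i$, so that its incident edges form a basis of a rank-$m$ sublattice -- I would select from the $N$ embedding coordinates a privileged subset $(x_0,\ldots,x_m,y_1,\ldots,y_{n-m})$ whose values at $p_i$ form a local analytic coordinate system on an ambient smooth model. The $x_i$, $i=0,\ldots,m$, vanish at $p_i$ and correspond to the $m+1$ facets of $Q_i$ incident to the chosen vertex, whereas the $y_j$ are nonzero at $p_i$ and encode the toric directions transverse to the singular stratum. The remaining Altmann coordinates can then be eliminated in terms of these via the holomorphic implicit function theorem, applied to the deformed binomial equations that are linear in the eliminated variables.

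Tracking how $\pi$ relates to the retained coordinates, the unique surviving deformed binomial should reduce, near $p_i$, to an equation of the form $x_0x_1\cdots x_m=\pi-c_i$. This exhibits the desired biholomorphism between a neighborhood of the singular fibre $\pi^{-1}(c_i)$ and the preimage of a neighborhood of $0$ under $x_0\cdots x_m:\mathbb{C}^{m+1}\times(\mathbb{C}^*)^{n-m}\to\mathbb{C}$, and smoothness of $Y_\epsilon$ near $p_i$ follows at once. The main obstacle I anticipate is the combinatorial bookkeeping in the second step: identifying the correct privileged subset of Altmann generators, and verifying that once the $\epsilon$-perturbation is taken into account and the other variables eliminated, the surviving relation really takes the clean product form $x_0\cdots x_m$ rather than acquiring higher-order analytic corrections. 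This is where the precise shape of the hypothesis on $Q$ -- in particular the smoothness assumption on the Minkowski summands at the relevant vertices -- should do the essential work, and where I expect most of the computation to be concentrated.
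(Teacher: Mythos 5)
Your strategy mirrors the paper's: embed $Y_\epsilon$ via Altmann-style characters, select $n+1$ privileged coordinates attached to the summand $M_p$, eliminate the rest, and extract a surviving relation of the form $x_0\cdots x_m = \pi - c_i$. You also correctly anticipate that the hypothesis on $Q$ amounts to each summand being a unimodular simplex (the paper's ``admissible decomposition''), and that this is what makes the clean product form possible.

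But there is a genuine gap. You carry out a local analysis at a single critical point $p_i$ and eliminate via the holomorphic implicit function theorem, which only yields a biholomorphism near $p_i$; the theorem asks for one on a neighborhood of the entire singular fibre $\pi^{-1}(c_i)$, which is the whole hypersurface $\{x_0\cdots x_m = 0\}$ inside $\mathbb{C}^{m+1}\times(\mathbb{C}^*)^{n-m}$. The critical locus of $\pi$ inside this fibre has positive-dimensional strata wherever at least two of the $x_j$ vanish simultaneously, so ``a distinguished critical point $p_i$'' is a misleading picture, and the local normal forms at different such points would still have to be glued coherently — a step your proposal does not address. The paper avoids patching entirely by making the elimination algebraic and global: admissibility of $M_p$ produces matrices $A_p$, $C_p$ with $V_pA_p=\mathrm{Id}$ and $V_pC_p=0$ (the rows of $V_p$ being the non-zero vertices of $M_p$), and the privileged characters $y_p, x_{p,1},\dots,x_{p,m_p},w^+_{p,1},\dots,w^+_{p,n-m_p}$ are attached to $b_p=-\sum_l a_{p,l}$, to the columns $a_{p,l}$ of $A_p$ (the dual basis to the edge vectors, not ``facets'' as you suggest), and to the columns of $C_p$. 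Every other generating character in $\mathscr{A}_{\mathscr{H}}$ is then shown to be a genuine Laurent monomial in these and the units $t_j$ ($j\neq p$), with no local holomorphic correction terms, so the biholomorphism holds over a full neighborhood of the fibre in one stroke. The identity you flag as crucial — that the exponent of $t_p$ in the surviving relation is exactly $1$ — follows from $\langle\phi(b_p),e_p\rangle = 1$ and $\langle\phi(a_{p,l}),e_p\rangle=0$, which is precisely where $V_pA_p=\mathrm{Id}$ and $0\in M_p$ enter.
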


In \cite{auroux2007mirror, auroux2009special}, Auroux constructed a special singular Lagrangian fibration in the complement of a divisor of $\mathbb{C}^n$. The mentioned fibration is constructed from the complex fibration $\widetilde{\pi}:\mathbb{C}^k\to \mathbb{C}$ given by $\widetilde{\pi}(x_1,\dots,x_k)=x_1\cdots x_k$. We showed that the local model of the singular fibres of the complex fibration $\pi$ is of the form $\mathbb{C}^k\times(\mathbb{C}^*)^{n+1-k}\to \mathbb{C}^k \xrightarrow{\widetilde{\pi}} \mathbb{C}$. So the analogous approach in \cite{auroux2007mirror, auroux2009special} produces a singular Lagrangian fibration as in \cite{chan2012syz, lau2014open}. To visualize symplectic aspects of this singular Lagrangian fibration, we describe it by a convex base diagram with cuts, which can be thought of as analogous to the moment map of a toric action, and is a generalization of the base diagrams constructed by Symington \cite{symington71four} to describe almost toric fibrations. In particular, we show that the image of the diagram is the polyhedral dual to $\sigma$.
\begin{thm} \label{theoremB}
	With the same assumptions as in Theorem \ref{theoremA}, there is a singular Lagrangian fibration in $Y_\epsilon$ that can be represented by a convex diagram $Y_\epsilon \to \mathbb{R}^{n+1}$ with cuts, such that the image of the convex diagram is $\sigma^\vee$ (the dual cone of $\sigma$). In addition, there is a one-parameter family of monotone Lagrangian torus fibres in $Y_\epsilon$.
\end{thm}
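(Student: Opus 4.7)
The plan is to adapt Auroux's construction of special Lagrangian torus fibrations from \cite{auroux2007mirror, auroux2009special} to the global setting provided by the embedding $Y_\epsilon \subset \mathbb{C}^N$. First I would exhibit an $n$-dimensional subtorus $T^n \subset (\mathbb{C}^*)^{n+1}$ whose action on $Y_\sigma$ extends to a Hamiltonian $T^n$-action on the smoothing $Y_\epsilon$ and preserves the fibration $\pi\colon Y_\epsilon\to\mathbb{C}$; this is the subtorus singled out by the Minkowski decomposition coordinates. Writing $\mu\colon Y_\epsilon\to\mathbb{R}^n$ for its moment map, the candidate singular Lagrangian fibration is
\[
F=(H,\mu)\colon Y_\epsilon\to\mathbb{R}\times\mathbb{R}^n=\mathbb{R}^{n+1},
\]
where $H=\tfrac{1}{2}|\pi-c|^2$ (or $\log|\pi|$) is chosen so that the level sets are compact and the critical locus of $F$ is concentrated on isolated Lagrangian tori lying inside the singular fibres of $\pi$.

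The verification that $F$ is a singular Lagrangian fibration in the sense of Definition \ref{defsinglagfib} is largely formal away from the singular fibres: the components of $\mu$ Poisson-commute with each other because they come from a torus action, and $H$ Poisson-commutes with $\mu$ because $\pi$ is $T^n$-invariant. Near each singular fibre I would invoke Theorem \ref{theoremA}: in coordinates $(x_0,\dots,x_m,z_1,\dots,z_{n-m})$ on $\mathbb{C}^{m+1}\times(\mathbb{C}^*)^{n-m}$ with $\pi=x_0\cdots x_m$, Auroux's nodal fibration
\[
\bigl(|x_0\cdots x_m-c|,\;|x_1|^2-|x_0|^2,\ldots,|x_m|^2-|x_0|^2\bigr)
\]
combined with the standard moment map on $(\mathbb{C}^*)^{n-m}$ reproduces the local picture of $F$, and its singular fibre structure is the well-known generalized conifold model. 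This identifies each critical fibre of $F$ with an $(n-1)$-torus times a nodal collar.

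To recover $\sigma^\vee$ as the image, I would analyse the restriction of $\mu$ to a generic smooth fibre $\pi^{-1}(z)\cong(\mathbb{C}^*)^n$ and show that it is an affine translate of a convex polytope whose shape depends on $z$ only through $|\pi|$ and the phases of the Minkowski components. Sweeping over $z$ and combining with the $H$-coordinate, the union of images is a convex cone, and the toric asymptotics of $Y_\sigma$ as $|\pi|\to\infty$ match its recession cone with the polar cone $\sigma^\vee$; the smoothing contributes only bounded modifications that collapse onto the same cone in the limit. The cuts of Symington's type arise at the images of the singular fibres and encode the affine monodromy of $F$ around them, which in the local model is the standard Auroux monodromy along the direction $\partial_H$, so after introducing them the base becomes convexly immersed onto $\sigma^\vee$.

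For the one-parameter family of monotone tori I would follow \cite{auroux2007mirror, chan2012syz, lau2014open}: a smooth fibre $L_{(h,\mathbf{u})}$ is monotone precisely when the Maslov index and symplectic area of the basic holomorphic discs coincide up to a common factor. The basic discs are of two types, the Clifford-type discs in the $T^n$-directions and the sections of $\pi$ bounded by the singular fibres, and computing their areas shows that monotonicity fixes $\mathbf{u}$ and selects a single value of $|\pi-c|$ for each choice of $c$, producing the advertised one-parameter family. The main obstacle, I expect, is the global identification of the image with $\sigma^\vee$: piecing together the local Auroux models at the various critical values of $\pi$ with the toric asymptotics at infinity requires the global coordinates coming from the Minkowski decomposition and uses the hypotheses on $Q$ in an essential way, since these are what guarantee that the local monodromy contributions add up to the correct combinatorial data on the cone.
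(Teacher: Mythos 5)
Your overall framework is the same as the paper's: use the Gross--Auroux recipe, taking the $T^n$-action on the fibres of $\pi$ together with a radial function of $\pi$ to produce a singular Lagrangian torus fibration, and the local model from Theorem~\ref{theoremA} to control the singular fibres. But both of the substantive claims of Theorem~\ref{theoremB} are left unproved, and the routes you sketch toward them would not close.

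On the identification of the image with $\sigma^\vee$: the recession-cone/``toric asymptotics'' reasoning does not apply. The paper's base diagram is taken in \emph{flux} coordinates $(\lambda_1,\dots,\lambda_{n+1})$, with $\lambda_{n+1}$ the symplectic area of a collapsing disk, and its raw image (before any cut-transfer) is essentially the upper half-space $\{\lambda_{n+1}\ge 0\}$ with bounded modifications near the cuts; its recession cone is therefore the half-space, not $\sigma^\vee$. The cone $\sigma^\vee$ only appears after the transferring-the-cut operation (Definition~\ref{tcut}): one applies the affine monodromy matrices $M^{\mathrm{af}}_{\mathfrak{b}_{p,j}}$, whose shear directions are exactly the non-zero vertices $v_{p,j}$ of the Minkowski summand $M_p$, to the sectors $\overline{D_{p,j}}$. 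The content of Lemma~\ref{newdiag} is that the cumulative effect on the boundary $\{\lambda_{n+1}=0\}$ is $\lambda_{n+1}\mapsto\sum_p \max\{\langle\lambda,-M_p\rangle\}=\eta_0(\lambda)$, and one then invokes Altmann's description $\partial\sigma^\vee\cap\mathbb{Z}^{n+1}=\{(c,\eta_0(c))\}$ to conclude. This is an explicit combinatorial/affine-geometric computation, not a limiting or asymptotic argument, and it is exactly where the admissibility hypotheses on $Q$ enter.

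On the monotone family: your argument that ``the basic discs are of two types \ldots and computing their areas shows that monotonicity fixes $\mathbf{u}$ and selects a single value of $|\pi-c|$ for each choice of $c$'' is not a proof and also mis-parametrizes the family. The family in the paper is the central ray $\mathbb{R}_{\ge 0}e_{n+1}\cap\sigma^\vee$ with the center of $\gamma$ held fixed at $1$ and the scale $s$ varying; for each $s>0$ there is one monotone fibre $\mathfrak{L}_s$. To establish monotonicity the paper uses two inputs you do not mention: first, that the Maslov-$2$ flux disks attached to the facets of $\sigma^\vee$ all have area exactly $s$ (a computation using that all facet normals are of the form $(v,1)$); second, and crucially, that since $Y_{\widetilde\sigma,\epsilon}$ is affine hence Weinstein, $c_1=\omega=0$ on $\pi_2(Y_{\widetilde\sigma,\epsilon})$, so $\pi_2(Y_{\widetilde\sigma,\epsilon},\mathfrak{L}_s)\cong\pi_1(\mathfrak{L}_s)\oplus\pi_2(Y_{\widetilde\sigma,\epsilon})$ and the relation $\omega=\tfrac{s}{2}\mu$ holds on all of $\pi_2(Y_{\widetilde\sigma,\epsilon},\mathfrak{L}_s)$. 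Without that second ingredient one cannot rule out Maslov-$2$ classes of a different area, so ``computing the areas of the basic discs'' does not suffice.
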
 
These results allow us to carry out the following immediate applications:
\begin{itemize}
	\item Using the wall-crossing formula \cite{pascaleff2020wall}, we can describe the potential of the one-parameter family of monotone fibres in terms of the Minkowski decomposition of Q. This potential was also described in \cite{lau2014open} using a different technique.
	\item  We show several examples of families of non-displaceable monotone tori.
	\item When $Q$ is itself a polytope associated with the fan of a Fano algebraic variety, we can immediately describe a singular Lagrangian skeleton for which the smoothing $Y_\epsilon$ retracts to. Moreover, there is a natural compactification of $Y_\epsilon$, and by the work of \cite{diogoprep} the potential of the monotone Lagrangian on the compactification $\overline{Y_\epsilon}$ of $Y_\epsilon$ has a term associated with the relative Gromov Witten invariant between $\overline{Y_\epsilon}$ and the compactifying divisor. This term can be readily identified in the potential for the Lagrangian in $Y_\epsilon$, which is described in terms of the Minkowski decomposition of $Q$.
\end{itemize}
Some future developments arise naturally from this work. In \cite{chan2016lagrangian}, the authors discussed homological mirror symmetry for the conifold beginning with a singular Lagrangian fibration as in this article. Also, we hope to be able to use the fibration to describe a Weinstein structure on $Y_\epsilon$ and work towards describing symplectic homology and Wrapped Floer homology for these spaces. We also want to describe a Gelfand-Cetlin fibration in the sense of \cite{shelukhin2018geometry} originated as a limit of the singular Lagrangian fibrations described in this paper. These limits can be thought of as moving the singular fibres toward the boundary of the convex base diagram with cuts. We intend to provide some applications using these Gelfand-Cetlin fibrations.\\

In Section \ref{Background}, we review the basic concepts of toric geometry in the algebro-geometric framework. We also review some ideas and introduce definitions regarding singular Lagrangian fibrations that are useful for this article. In Section \ref{examples}, we describe a toy example to illustrate how the proofs of Theorem \ref{theoremA} and Theorem \ref{theoremB} work. In Section \ref{thm}, we give the proofs of the main theorems. In Section \ref{secwallc}, we describe the potential function of families of monotone Lagrangians in $Y_{\epsilon}$. In Section \ref{otherex}, we present more examples of the application of the main results. In Section \ref{secompact}, we show how to obtain a compactification of $Y_\epsilon$ compatible with the $\mathbb{C}$-fibration under certain conditions (see Theorem \ref{comp1}), and Section \ref{future} presents future research developments.

\subsection*{Acknowledgments} I am deeply indebted to my co-advisor, Renato Vianna, for presenting to me this research project and for all the fantastic support given at every step of the process, including in the writing of this manuscript. I am also thankful to Vinicius Ramos for all the guidance during my graduate studies and, in particular, for introducing me to topics related to symplectic topology and setting me up to be co-advised by Renato Vianna. Special thanks to Eduardo Alves da Silva for solving my doubts about toric algebraic geometry. Thanks to Jonathan Evans and Lu\'is Diogo for taking an interest in my research and for helpful discussions. This work was supported by the CNPq, Conselho Nacional de Desenvolvimento Científico e Tecnológico, Ministry of Science, Technology and Innovation, Brazil.

	\section{Background}
	\label{Background}
	This section aims to review some concepts of toric varieties from algebraic geometry and discuss base diagrams for singular Lagrangians fibrations.
	\subsection{Toric Algebraic Geometry}
	In this subsection, we discuss Toric Varieties. Most parts of the definitions are classical and taken from \cite{cox}.  
 	\begin{dfn}
		The affine variety $ (\mathbb{C}^*)^n $ is a group under component-wise multiplication.	An \textbf{algebraic torus} $ T $ is an affine variety isomorphic to $ (\mathbb{C}^*)^n $, where $ T $ inherits a group structure from the isomorphism.
	\end{dfn}

There are two essential groups associated with an arbitrary algebraic torus $ T $: the group of characters and the group of one-parameter subgroups.

\begin{dfn}
	
	A \textbf{character} of an algebraic torus T is a morphism $ \chi: T \to \mathbb{C}^*  $, as an algebraic variety,  that is a group homomorphism. 
\end{dfn}

For example, if $ T=(\mathbb{C}^*)^n $, $ m = (a_1,\dots,a_n) \in \mathbb{Z}^n  $ gives a character $ \chi^m: (\mathbb{C}^*)^n \to \mathbb{C}^* $ defined by

\begin{equation}\label{defchar}
	\chi^m(t_1,\dots, t_n) = t_1^{a_1}\dots t_n^{a_n}.
\end{equation}

\begin{dfn}

A \textbf{one-parameter subgroup} of an algebraic torus $ T $ is a morphism $ \lambda:\mathbb{C}^* \to T $, as an algebraic variety, that is a group homomorphism. 
	\end{dfn}

For example, if $ T=(\mathbb{C}^*)^n $, $ u = (b_1,\dots,b_n) \in \mathbb{Z}^n  $ gives a one-parameter subgroup $ \lambda^u : \mathbb{C}^* \to (\mathbb{C}^*)^n $ defined by

\begin{equation}\label{defone}
	\lambda^u(t) = (t^{b_1} ,\dots, t^{b_n} ). 
\end{equation}
	
	For an arbitrary algebraic torus $ T $, its characters and the one-parameter subgroups form free abelian groups of rank	equal to the dimension of $ T $. Denote the group of characters and the group of one-parameter subgroups as $M$ and $N$, respectively. We say that $ m\in M $ gives the character $ \chi^m $ and that $ u \in N $ gives the one-parameter subgroup $ \lambda^u:\mathbb{C}^* \to T $.\\

There is a natural bilinear pairing $ \langle, \rangle: M\times N \to \mathbb{C} $ defined as follows:\\
Given a character $ \chi^m $ and a one-parameter subgroup $ \lambda^u $, the composition $  \chi ^m \circ \lambda^u : \mathbb{C}^* \to \mathbb{C}^* $ is a character of $ \mathbb{C}^* $, which is given by $ t \mapsto t^l $ for
some $ l\in \mathbb{Z} $. Then $ \langle m,u \rangle=l.$\\

By \cite[\textsection 16]{humphreys1975linear}, all characters and one-parameter subgroups of $(\mathbb{C}^*)^n$ arise as in (\ref{defchar}) and (\ref{defone}), respectively. Therefore, we can identify $ M $ and $ N $ with $ \mathbb{Z}^n $. The bilinear pairing obtained is the usual dot product
\[ \langle m,u \rangle  = \sum_{i=1}^{n} a_ib_i. \]

These two groups are important because we can construct affine toric varieties from subsets of them, as we will see below. In what follows, we will denote by $ T $ an algebraic torus isomorphic to $ (\mathbb{C}^*)^n  $, $ M $ its character lattice, and $ N $ its group of one-parameter subgroups. 

	\begin{dfn}[{\cite[Definition~1.1.3]{cox}}]
		An \textbf{affine toric variety} is an irreducible affine variety $ V $ containing
		an algebraic torus $ T  $ as a Zariski open subset such that the action of $ T $ on itself extends to an algebraic action of $ T $ on $ V $.
	\end{dfn}
	
	A set $ \mathscr{A} = \{m_1,\dots,m_s\} \subset M  $ gives characters $ \chi^{m_i}: T \to \mathbb{C}^* $. Consider the map
	\[ \Phi_{\mathscr{A}}: T \to (\mathbb{C}^*)^s \]
	defined by
	\[ \Phi_{\mathscr{A}}(t)= (\chi^{m_1}(t),\dots,\chi^{m_s}(t))\in (\mathbb{C}^*)^s\]
\begin{dfn}[{\cite[Definition~1.1.7]{cox}}] \label{toricfromA}
	Given a finite set $ \mathscr{A} \subseteq M $, the affine toric variety $ Y_{\mathscr{A}} $ is defined to be the Zariski closure in $\mathbb{C}^s$ of the image of the map $ \Phi_{\mathscr{A}} $.
\end{dfn}

The following proposition ensures that $ Y_{\mathscr{A}} $ is an affine toric variety.

\begin{prop}[{\cite[Proposition 1.1.8 ]{cox}}]
	
		 Given $ \mathscr{A} \subset M  $ as above, let $ \mathbb{Z}\mathscr{A} \subset M  $ be the sublattice generated by $ \mathscr{A} $. Then $ Y_{\mathscr{A}} $ is an affine toric variety whose algebraic torus (the image of $ \Phi_{\mathscr{A}} $) has character lattice $  \mathbb{Z}\mathscr{A} $. In particular, the dimension of $ Y_{\mathscr{A}} $ is the rank of $ \mathbb{Z}\mathscr{A} $.

\end{prop}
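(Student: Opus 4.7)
The plan is to identify the image $\Phi_{\mathscr{A}}(T)$ as a subtorus of $(\mathbb{C}^*)^s$ with character lattice $\mathbb{Z}\mathscr{A}$, and then use standard Zariski-closure arguments to establish that this subtorus is a dense open subset of $Y_{\mathscr{A}}$ and that its action extends to all of $Y_{\mathscr{A}}$.

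First I would examine $\Phi_{\mathscr{A}}$ as a morphism of algebraic tori. Its kernel $H \subset T$ is cut out by $\chi^{m_i}(t) = 1$ for $i = 1,\dots,s$, and this corresponds (via the pairing $\langle\,,\rangle$) to the annihilator in $N$ of the sublattice $\mathbb{Z}\mathscr{A} \subset M$. Hence $\Phi_{\mathscr{A}}$ factors as
\[ T \twoheadrightarrow T/H \hookrightarrow (\mathbb{C}^*)^s, \]
where $T/H$ is an algebraic torus whose character lattice is exactly $\mathbb{Z}\mathscr{A}$ and whose dimension is $\mathrm{rank}(\mathbb{Z}\mathscr{A})$. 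I would then invoke the fact (cf.\ \cite{humphreys1975linear}) that an algebraic subgroup of a torus is closed to conclude that $T_{\mathscr{A}} := \Phi_{\mathscr{A}}(T)$ is a closed subtorus of $(\mathbb{C}^*)^s$, isomorphic to $T/H$.

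Next I would study the Zariski closure $Y_{\mathscr{A}}$ of $T_{\mathscr{A}}$ in $\mathbb{C}^s$. Irreducibility of $Y_{\mathscr{A}}$ is immediate from irreducibility of $T_{\mathscr{A}}$. Because $T_{\mathscr{A}}$ is already closed in the open subset $(\mathbb{C}^*)^s \subset \mathbb{C}^s$, I obtain $Y_{\mathscr{A}} \cap (\mathbb{C}^*)^s = T_{\mathscr{A}}$, so $T_{\mathscr{A}}$ sits inside $Y_{\mathscr{A}}$ as a Zariski open and dense subvariety. This gives at once $\dim Y_{\mathscr{A}} = \dim T_{\mathscr{A}} = \mathrm{rank}(\mathbb{Z}\mathscr{A})$, and identifies the character lattice of the distinguished torus as $\mathbb{Z}\mathscr{A}$.

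Finally, to promote $Y_{\mathscr{A}}$ to an affine toric variety I would extend the action. The torus $(\mathbb{C}^*)^s$ acts algebraically on $\mathbb{C}^s$ by coordinate-wise multiplication, so restriction yields an action of $T_{\mathscr{A}}$ on $\mathbb{C}^s$. Since translation by a fixed element of $T_{\mathscr{A}}$ is a biregular automorphism of $\mathbb{C}^s$ that carries $T_{\mathscr{A}}$ into itself, it also preserves the Zariski closure $Y_{\mathscr{A}}$; this provides the required extension of the self-action. The only point that really needs care, and the one I would be most attentive to, is the closedness of the image of a homomorphism of algebraic tori inside the ambient torus; once that is granted, the rest of the argument is a routine unpacking of definitions.
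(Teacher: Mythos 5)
Your proof is correct and follows essentially the same route as the textbook proof in Cox--Little--Schenck (which the paper cites without repeating): factor $\Phi_{\mathscr{A}}$ through its image, invoke that images of torus homomorphisms are closed subtori, intersect the closure with $(\mathbb{C}^*)^s$ to get an open dense torus with the claimed character lattice, and extend the self-action by density. The one imprecision worth noting is that the kernel $H=\{t\in T:\chi^{m_i}(t)=1\text{ for all }i\}$ is $\mathrm{Hom}(M/\mathbb{Z}\mathscr{A},\mathbb{C}^*)$, which may be disconnected when $M/\mathbb{Z}\mathscr{A}$ has torsion, so it is not literally the annihilator sublattice in $N$; this does not affect the conclusion, since $T/H$ is still a torus with character lattice $\mathbb{Z}\mathscr{A}$ by injectivity of $\mathbb{C}^*$ as a $\mathbb{Z}$-module.
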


\begin{exmp} \label{exmpva1}
	Set
	\[ \mathscr{A}= \left\{ \begin{pmatrix}
		-1\\
		0\\
		1
	\end{pmatrix}, \begin{pmatrix}
		0\\
		-1\\
		1
	\end{pmatrix}, \begin{pmatrix}
		0\\1\\0
	\end{pmatrix}, \begin{pmatrix}
		1\\0\\0
	\end{pmatrix}   \right\}. \]
	The Zariski closure in $\mathbb{C}^4_{(x,y,z,w)}$ of the image of the map $\Phi_{\mathscr{A}}(t_1,t_2,t_3)=(t_1^{-1}t_3,t_2^{-1}t_3,t_2,t_1)$ is $Y_{\mathscr{A}}=V(xw-yz) \subseteq \mathbb{C}^4 $.
\end{exmp}

From the finite set $ \mathscr{A} = \{m_1,\dots,m_s\}  \subseteq M$, we can also get a projective variety by considering the homomorphism 
\begin{alignat*}{2}
	\pi:(\mathbb{C}^*)^s&\longrightarrow&&\mathbb{P}^{s-1}\setminus V(x_0\cdots x_{s-1})\\
	(t_1,\dots,t_s)&\longmapsto&& [t_1:\dots:t_s]
\end{alignat*}

\begin{dfn}[{\cite[Definition~2.1.1]{cox}}]
	Given a finite set $ \mathscr{A} = \{m_1,\dots,m_s\} \subseteq M $, the \textbf{projective toric variety} $ X_{\mathscr{A}} $ is the Zariski closure in $ \mathbb{P}^{s-1} $ of the image of the map $ \pi \circ \Phi_{\mathscr{A}} $.
\end{dfn}

\begin{prop} [{\cite[Proposition~2.1.4]{cox}}] \label{hom}
	Given $ Y_{\mathscr{A}}$ and $ X_{\mathscr{A}}  $ as above, the following are equivalent:
	\begin{enumerate}
		\item $ Y_{\mathscr{A}} \subseteq \mathbb{C}^s $ is the affine cone of $ X_{\mathscr{A}} \subseteq \mathbb{P}^{s-1} $.
		\item The ideal of $ Y_{\mathscr{A}} $ is homogeneous.
		\item There is $ u\in N $ and $ k>0 $ in $ \mathbb{N} $ such that $ \langle m_i,u \rangle =k $ for $ i=1\dots,s $.
	\end{enumerate}
\end{prop}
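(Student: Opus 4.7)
The plan is to verify the chain $(1) \iff (2) \iff (3)$, treating each equivalence separately. The first equivalence is formal and reflects the standard relationship between projective varieties and their affine cones; the second is the real content of the proposition and relies on the binomial description of the toric ideal $I(Y_{\mathscr{A}})$.

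For $(1) \iff (2)$, I would observe that a closed subvariety $Y \subseteq \mathbb{C}^s$ is the affine cone over a projective variety $X \subseteq \mathbb{P}^{s-1}$ precisely when $Y$ is invariant under the scaling $\mathbb{C}^*$-action on $\mathbb{C}^s$, and this $\mathbb{C}^*$-invariance is equivalent to the vanishing ideal $I(Y)$ being homogeneous. In our situation, $X_{\mathscr{A}}$ is by construction the projective closure of the image of $\pi \circ \Phi_{\mathscr{A}}$, which is exactly the projectivization of the torus $\Phi_{\mathscr{A}}(T) \subseteq Y_{\mathscr{A}}$, so once homogeneity is in place the affine cone over $X_{\mathscr{A}}$ must coincide with $Y_{\mathscr{A}}$.

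For $(2) \iff (3)$, let $L \subseteq \mathbb{Z}^s$ be the kernel of the lattice map $\phi: \mathbb{Z}^s \to M$ sending $e_i \mapsto m_i$. A standard computation identifies $I(Y_{\mathscr{A}})$ with the toric ideal generated by binomials $x^{a^+} - x^{a^-}$ with $a = a^+ - a^- \in L$, and such a binomial is homogeneous iff $\mathrm{sum}(a) := \sum_i a_i = 0$. Consequently $I(Y_{\mathscr{A}})$ is homogeneous iff the linear functional $\mathrm{sum}: \mathbb{Z}^s \to \mathbb{Z}$ vanishes on $L$, iff it factors through $\phi$ as $\mathrm{sum} = \bar u \circ \phi$ for some homomorphism $\bar u: \mathbb{Z}\mathscr{A} \to \mathbb{Z}$ with $\bar u(m_i) = 1$. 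Extending $\bar u$ to a $\mathbb{Q}$-linear map on $M \otimes \mathbb{Q}$ and clearing denominators by a suitable $k > 0$ then produces $u \in N = \mathrm{Hom}(M, \mathbb{Z})$ with $\langle m_i, u \rangle = k$ for all $i$. Conversely, given such $u$ and $k$, the identity $\langle \sum_i a_i m_i, u \rangle = k \cdot \mathrm{sum}(a)$ forces $\mathrm{sum}(a) = 0$ for every $a \in L$, recovering homogeneity.

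The main subtlety is the rational extension step in $(2) \Rightarrow (3)$: since $\mathbb{Z}\mathscr{A}$ need not be a saturated sublattice of $M$, the functional $\bar u$ with $\bar u(m_i) = 1$ does not generally lift integrally to an element of $N$, and this is precisely the reason for introducing the positive integer $k$. Pinpointing the optimal $k$ (essentially the index coming from the torsion of $M/\mathbb{Z}\mathscr{A}$ in the relevant direction) is where I would devote the most attention in writing out the argument rigorously.
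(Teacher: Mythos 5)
The paper does not actually prove this proposition: it is quoted verbatim from Cox--Little--Schenck \cite[Proposition~2.1.4]{cox}, so there is no in-paper argument to compare against. Your sketch is, nevertheless, essentially the standard proof (and essentially the one in that reference), and it correctly identifies both nontrivial points: the identification of $I(Y_{\mathscr{A}})$ with the lattice ideal of $L=\ker(\mathbb{Z}^s\to M,\ e_i\mapsto m_i)$, and the need for the positive integer $k$ because $\mathbb{Z}\mathscr{A}$ need not be a saturated (or even full-rank) sublattice of $M$, so the functional sending each $m_i\mapsto 1$ need not extend integrally to $N=\mathrm{Hom}(M,\mathbb{Z})$ without clearing denominators.

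One step in $(2)\Leftrightarrow(3)$ deserves to be made explicit. You pass from ``$I(Y_{\mathscr{A}})$ is homogeneous'' to ``$\mathrm{sum}$ vanishes on $L$'' by inspecting the binomial generators $x^{a^+}-x^{a^-}$, but a single inhomogeneous generator does not by itself force an ideal to be inhomogeneous. The missing observation is that $I(Y_{\mathscr{A}})$ is prime and contains no monomials, since $\Phi_{\mathscr{A}}(T)\subseteq(\mathbb{C}^*)^s$ is dense in $Y_{\mathscr{A}}$; hence if $I(Y_{\mathscr{A}})$ were homogeneous and some $a\in L$ had $\mathrm{sum}(a)\neq 0$, the homogeneous components $x^{a^+}$ and $x^{a^-}$ of the binomial would each lie in $I(Y_{\mathscr{A}})$, a contradiction. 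With that inserted, and the Smith-normal-form argument you gesture at for choosing $k$ (take $k$ to be the least common multiple of the elementary divisors of $\mathbb{Z}\mathscr{A}\subseteq M$ and extend by zero on a complement of the saturation), the proof is complete. Your treatment of $(1)\Leftrightarrow(2)$ is fine; the only care needed there is to note that density of $\Phi_{\mathscr{A}}(T)$ in $Y_{\mathscr{A}}\setminus\{0\}$ is what guarantees the projectivization of the cone $Y_{\mathscr{A}}$ is exactly $X_{\mathscr{A}}$ and not something larger, and you do address this.
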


\begin{exmp}
	Following Example \ref{exmpva1}, we can consider:
	\[ \pi \circ \Phi_{\mathscr{A}} (t_1,t_2,t_3)=[t_1^{-1}t_3:t_2^{-1}t_3:t_2:t_1] \subseteq \mathbb{P}^3. \]
	The Zariski closure in $\mathbb{P}^3_{(x:y:z:w)}$ of the image of $\pi \circ \Phi_{\mathscr{A}} $ is $X_{\mathscr{A}}=V(xw-yz)\subseteq \mathbb{P}^3$. In this case, $Y_{\mathscr{A}}$ is the affine cone of $X _{\mathscr{A}} $
\end{exmp}

To describe another way to obtain affine toric varieties, we will need the following definitions:

\begin{dfn}
	A \textbf{semigroup} is a set $ S $ with an associative binary operation
	and an identity element. To be an \textbf{affine semigroup}, we also require that:
	\begin{itemize}
		\item The binary operation on $ S $ is commutative. We will write the operation as $ + $
		and the identity element as $ 0 $. Thus a finite set $ \mathscr{A} \subset S  $ gives
		\[ \mathbb{N} \mathscr{A} = \left. \left\{ \sum_{m\in \mathscr{A}} a_mm \right| a_m\in \mathbb{N}\right\} \subseteq S \]
	\item The semigroup is finitely generated, meaning that there is a finite set $ \mathscr{A} \subset S $
	such that $ \mathbb{N} \mathscr{A} = S $.
	\item The semigroup can be embedded in a lattice $ M $.
	\end{itemize}

\end{dfn}

\begin{dfn}
	Given an affine semigroup $ S \subset M, $ the \textbf{semigroup algebra} $ \mathbb{C}[S] $ is the vector
	space over $ \mathbb{C} $ with $ S $ as basis and multiplication induced by the semigroup structure of $ S $.
\end{dfn}

To make this precise, recall that $ m \in M $ gives the character $ \chi^m $. Then
\[ \mathbb{C}[S] = \left. \left\{ \sum_{m\in S} c_m\chi^m \right| c_m \in \mathbb{C} \text{ and } c_m = 0 \text{ for all but finitely many } m \right\} \]

with multiplication induced by
\[ \chi^m\cdot \chi^{m'}=\chi^{m+m'} \]

If $ S = \mathbb{N}\mathscr{A} $ for $ \mathscr{A}= \{m_1,\dots ,m_s\} $, then $ \mathbb{C}[S] =\mathbb{C} [\chi^{m_1},\dots,\chi^{m_s}] $.\\

The following proposition will tell us how to obtain an affine toric variety from an affine semigroup.

\begin{prop}[{\cite[Proposition~1.1.14]{cox}}] \label{atvsemigroup}
	Let $ S \subset M  $ be an affine semigroup. Then:
	\begin{enumerate}
		\item $ \mathbb{C}[S] $ is an integral domain and finitely generated as a $ \mathbb{C} $-algebra.
	
	\item $ \text{Spec} (\mathbb{C}(S)) $ is an affine toric variety whose algebraic torus has character lattice $ \mathbb{Z}S $, and if $ S = \mathbb{N}\mathscr{A} $ for a finite set $ \mathscr{A} \subset M $, then $ \text{Spec} (\mathbb{C}(S))=Y_{\mathscr{A}} $.
	\end{enumerate}
\end{prop}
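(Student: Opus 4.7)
For part (1), the plan is to realize $\mathbb{C}[S]$ as a subring of the Laurent polynomial ring $\mathbb{C}[M]$. Since $M$ is a lattice (hence torsion-free), $\mathbb{C}[M]\cong \mathbb{C}[t_1^{\pm 1},\dots,t_n^{\pm 1}]$ is an integral domain, and the monomial basis $\{\chi^m : m\in S\}$ of $\mathbb{C}[S]$ sits inside the monomial basis of $\mathbb{C}[M]$ compatibly with multiplication; subrings of domains are domains. For finite generation, I would invoke the hypothesis that $S$ is an affine semigroup, i.e.\ $S=\mathbb{N}\mathscr{A}$ for some finite $\mathscr{A}=\{m_1,\dots,m_s\}\subset M$. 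Writing any element of $S$ as $\sum a_i m_i$ with $a_i\in \mathbb{N}$ shows $\chi^{\sum a_i m_i}=\prod (\chi^{m_i})^{a_i}$, so $\mathbb{C}[S]=\mathbb{C}[\chi^{m_1},\dots,\chi^{m_s}]$ is generated as a $\mathbb{C}$-algebra by $s$ elements.

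For part (2), let $V=\mathrm{Spec}(\mathbb{C}[S])$. Since $\mathbb{C}[S]$ is a finitely generated $\mathbb{C}$-algebra and a domain, $V$ is an irreducible affine variety. To embed a torus, I would observe that the inclusion $\mathbb{C}[S]\hookrightarrow \mathbb{C}[\mathbb{Z}S]$ induces a morphism $T_{\mathbb{Z}S}:=\mathrm{Spec}(\mathbb{C}[\mathbb{Z}S])\to V$, where $T_{\mathbb{Z}S}\cong(\mathbb{C}^*)^r$ for $r=\mathrm{rank}(\mathbb{Z}S)$. The key point is that $\mathbb{C}[\mathbb{Z}S]$ is the localization of $\mathbb{C}[S]$ at the single element $f=\chi^{m_1}\cdots\chi^{m_s}$: each $m_i$ becomes invertible, and any $m\in\mathbb{Z}S$ can be written as an integer combination of the $m_i$, so $\chi^m$ lies in $\mathbb{C}[S][f^{-1}]$. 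Consequently $T_{\mathbb{Z}S}\hookrightarrow V$ is the standard open embedding $D(f)\subset V$, exhibiting $T_{\mathbb{Z}S}$ as a Zariski open torus in $V$ with character lattice $\mathbb{Z}S$.

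To see the torus action extends, I would describe it algebraically as the $\mathbb{C}$-algebra homomorphism $\mathbb{C}[S]\to \mathbb{C}[\mathbb{Z}S]\otimes_\mathbb{C}\mathbb{C}[S]$ sending $\chi^m \mapsto \chi^m\otimes \chi^m$ for $m\in S$; this is well-defined precisely because $S\subset \mathbb{Z}S$, it is a ring map since $\chi^m\chi^{m'}=\chi^{m+m'}$, and it restricts to the coproduct of $\mathbb{C}[\mathbb{Z}S]$ on the open subtorus, so it defines an action extending the group multiplication. The mild obstacle here is just bookkeeping the Hopf-algebra compatibilities, but no geometric subtlety arises.

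Finally, assuming $S=\mathbb{N}\mathscr{A}$, I would identify $V$ with $Y_\mathscr{A}$ by comparing coordinate rings. The map $\Phi_\mathscr{A}:T\to \mathbb{C}^s$ corresponds to the $\mathbb{C}$-algebra homomorphism $\varphi:\mathbb{C}[x_1,\dots,x_s]\to \mathbb{C}[M]$, $x_i\mapsto\chi^{m_i}$. By Definition \ref{toricfromA}, $Y_\mathscr{A}$ is the Zariski closure of $\mathrm{Im}(\Phi_\mathscr{A})$, so its vanishing ideal is $\ker\varphi$ and its coordinate ring is $\mathrm{Im}(\varphi)=\mathbb{C}[\chi^{m_1},\dots,\chi^{m_s}]=\mathbb{C}[S]$. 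Taking Spec yields $Y_\mathscr{A}=\mathrm{Spec}(\mathbb{C}[S])$, completing the proof.
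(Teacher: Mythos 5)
The paper does not prove this proposition: it is stated as a cited background result from Cox--Little--Schenck, \cite[Proposition~1.1.14]{cox}, so there is no in-paper argument to compare against. Your proof is correct and is essentially the standard argument found in that reference: realize $\mathbb{C}[S]$ as a subring of the Laurent polynomial ring $\mathbb{C}[M]$ to get integrality, use $S=\mathbb{N}\mathscr{A}$ for finite generation, exhibit the open torus as the distinguished open $D(\chi^{m_1}\cdots\chi^{m_s})$ with coordinate ring the localization $\mathbb{C}[S][f^{-1}]=\mathbb{C}[\mathbb{Z}S]$, describe the action via the coaction $\chi^m\mapsto\chi^m\otimes\chi^m$, and identify $\mathrm{Spec}(\mathbb{C}[S])$ with $Y_{\mathscr{A}}$ by comparing coordinate rings through $\varphi\colon x_i\mapsto\chi^{m_i}$, using that the ideal of a Zariski closure of the image of an irreducible variety is $\ker\varphi$. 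One small point worth making explicit in the last step is why $I\bigl(\mathrm{Im}\,\Phi_{\mathscr{A}}\bigr)=\ker\varphi$: a polynomial $p$ vanishes on the image iff $\varphi(p)$ vanishes as a function on $T$, and since $T$ is reduced and irreducible with $\mathbb{C}[T]=\mathbb{C}[M]$ this is the same as $\varphi(p)=0$ in $\mathbb{C}[M]$; you implicitly use this but it deserves a sentence.
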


A fundamental definition in this article is the idea of a convex polyhedral cone and how to obtain an affine toric variety from it. Set $ N_{\mathbb{R}} := N \otimes_{\mathbb{Z}} \mathbb{R} $ and $ M_{\mathbb{R}} := M \otimes_{\mathbb{Z}} \mathbb{R} $.

\begin{dfn}[{\cite[Definition~1.2.1]{cox}}]
	A \textbf{convex polyhedral cone} in $ N_{\mathbb{R}} $ is a set of the form
	\[ \sigma = \text{Cone}(S')=\left. \left\{ \sum_{u\in S'} \lambda_u u \right| \lambda_u \geq 0\right\} \subset N_{\mathbb{R}} \]
		where $ S' \subset N_{\mathbb{R}}  $ is finite. We say that $ \sigma $ is \textbf{generated} by $ S' $. Also set $ \text{Cone}(\emptyset) = \{0\} $. We say that $ \sigma $ is \textbf{rational} if $ S' \subset N $.
\end{dfn}

\begin{dfn}[{\cite[Proposition 1.2.12]{cox}}]
	A convex polyhedral cone $\sigma$ is \textbf{strongly convex} if $\sigma \cap (-\sigma)=\{0\}$.
\end{dfn}

\begin{dfn}[{\cite[Definition~1.2.3]{cox}}]\label{dual}
	Given a polyhedral cone $\sigma \subset N_{\mathbb{R}} $,  its \textbf{dual cone} is defined by
	\[ \sigma^\vee = \{m\in M_{\mathbb{R}}|\langle m,u \rangle\geq0\text{ for all }u\in\sigma\}\]
\end{dfn}

Given a rational polyhedral cone
$ \sigma \subset N_{\mathbb{R}}$, the lattice points
\[ S_{\sigma} := \sigma^\vee \cap M \subset M \]
form a semigroup. A key fact is that this semigroup is finitely generated.

\begin{prop}[Gordan's Lemma. {\cite[Proposition~1.2.17]{cox}}]
	$ S_{\sigma}$ is finitely generated and is an affine semigroup.
\end{prop}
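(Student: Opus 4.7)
The plan is to reduce everything to a pigeonhole argument on lattice points in a bounded parallelepiped. First I would invoke the (standard) fact that the dual of a rational polyhedral cone is itself a rational polyhedral cone (Farkas--Minkowski--Weyl), so that we can write $\sigma^\vee = \mathrm{Cone}(m_1,\dots,m_s)$ for some finite set $\{m_1,\dots,m_s\}\subset M$. This is the one external input I would use; in Cox--Little--Schenck it appears just before Gordan's Lemma, so I feel comfortable citing it rather than reproving it here.

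Next I would introduce the compact set
\[
K \;=\; \Bigl\{\,\textstyle\sum_{i=1}^{s} t_i m_i \;\Big|\; 0\le t_i\le 1\,\Bigr\}\;\subset\; M_{\mathbb{R}}.
\]
Because $M\subset M_{\mathbb{R}}$ is a discrete subgroup and $K$ is bounded, the intersection $K\cap M$ is a \emph{finite} set. My claim, which is the core of the proof, is that $K\cap M$ generates $S_\sigma$ as a semigroup. Note the $m_i$ themselves lie in $K\cap M$ (take one $t_i=1$, the others $0$).

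To verify the claim, take an arbitrary $m\in S_\sigma = \sigma^\vee\cap M$. Since $m\in\sigma^\vee$, we may write $m=\sum_{i=1}^{s}\lambda_i m_i$ with $\lambda_i\ge 0$. Split each coefficient into its integer and fractional parts, $\lambda_i=\lfloor\lambda_i\rfloor+\{\lambda_i\}$, and set
\[
m \;=\; \sum_{i=1}^{s}\lfloor\lambda_i\rfloor\, m_i \;+\; \sum_{i=1}^{s}\{\lambda_i\}\, m_i.
\]
The first sum is a nonnegative $\mathbb{Z}$-linear combination of the generators $m_i$, and the second sum, call it $m'$, lies in $K$ by construction. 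Moreover $m' = m-\sum_i\lfloor\lambda_i\rfloor m_i$ is a difference of lattice points, hence $m'\in K\cap M$. Therefore $m$ is a nonnegative integer combination of elements of the finite set $K\cap M$, proving finite generation.

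Finally I would quickly verify the remaining properties of an affine semigroup: addition on $S_\sigma$ is commutative and associative with identity $0\in\sigma^\vee\cap M$, and $S_\sigma\subset M$ is by definition embedded in the lattice $M$. The potential obstacle is really only the rationality of $\sigma^\vee$; once that is in hand the rest is the one-line parallelepiped trick. A subtle point to keep in mind is that strong convexity of $\sigma$ is \emph{not} needed here (it would only be needed to make $S_\sigma$ saturated enough to define an algebraic torus of the expected dimension), so no assumption about $\sigma\cap(-\sigma)$ enters the argument.
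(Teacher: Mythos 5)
Your proof is correct and is the standard parallelepiped/pigeonhole argument for Gordan's Lemma, which is exactly the proof given in the cited reference (Cox--Little--Schenck, Proposition 1.2.17); the paper itself simply quotes the result without reproducing a proof. Your remark that strong convexity plays no role here is also accurate.
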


\begin{thm}[{\cite[Theorem~1.2.18]{cox}}] \label{atvcone}
	Let $ \sigma  \subset  N_{\mathbb{R}} \cong \mathbb{R}^n $ be a rational polyhedral cone with semigroup
	$ S_{\sigma} $. Then
	\[ Y_{\sigma}:=Y_{S_{\sigma}} = \text{Spec}(\mathbb{C}[S_{\sigma}])  \]
	is an affine toric variety. 
\end{thm}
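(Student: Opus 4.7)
The plan is to deduce the theorem by chaining together two of the results already collected in this section: Gordan's Lemma and Proposition \ref{atvsemigroup}. Thus the proof is essentially bookkeeping once those ingredients are in hand.

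First, I would apply Gordan's Lemma to the rational polyhedral cone $\sigma$. This asserts that $S_\sigma = \sigma^\vee \cap M$ is finitely generated and is an affine semigroup. In particular, it yields a finite subset $\mathscr{A} \subset M$ with $\mathbb{N}\mathscr{A} = S_\sigma$, the operation $+$ is commutative with identity $0$ inherited from the lattice $M$, and $S_\sigma$ is embedded in $M$, so the three defining axioms of an affine semigroup are satisfied.

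Next, I would feed $S_\sigma$ into Proposition \ref{atvsemigroup}. Part (1) gives that $\mathbb{C}[S_\sigma]$ is an integral domain and a finitely generated $\mathbb{C}$-algebra, so $Y_\sigma := \mathrm{Spec}(\mathbb{C}[S_\sigma])$ is a legitimate irreducible affine variety over $\mathbb{C}$. Part (2) then delivers exactly what the theorem claims: $\mathrm{Spec}(\mathbb{C}[S_\sigma])$ is an affine toric variety whose distinguished algebraic torus has character lattice $\mathbb{Z}S_\sigma$, and the second clause of (2) applied to the generating set $\mathscr{A}$ supplied by Gordan identifies this Spec with the construction $Y_{\mathscr{A}}$ of Definition \ref{toricfromA}. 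This matches the notation $Y_\sigma = Y_{S_\sigma}$ used in the statement, so the three items required by the definition of affine toric variety (irreducible affine variety, containing an algebraic torus as a Zariski-open subset, with an extending torus action) are all verified.

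I do not expect any real obstacle here — the substantive content sits entirely inside Gordan's Lemma, whose proof rests on a standard pigeonhole argument in the fundamental parallelotope spanned by generators of $\sigma^\vee$, and which is cited rather than reproved. The statement of the theorem is thus a direct packaging of the earlier material.
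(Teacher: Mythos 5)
Your proof is correct and is exactly the standard chain of reasoning (Gordan's Lemma to guarantee $S_\sigma$ is an affine semigroup, then Proposition~\ref{atvsemigroup} to conclude that $\mathrm{Spec}(\mathbb{C}[S_\sigma])$ is an affine toric variety); this is the argument given in the cited reference \cite[Theorem~1.2.18]{cox}, which the paper quotes without reproving.
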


From Theorem \ref{atvcone} and Proposition \ref{atvsemigroup}, we conclude that if $\mathscr{A}\subset M$ is a finite set such that $\mathbb{N}\mathscr{A}=S_{\sigma}$, then $Y_{\mathscr{A}}=Y_\sigma$. This is the motivation for the construction of the set $\mathscr{A}_{\mathscr{H}}$ in the proof of Theorem \ref{fibration}. 

\begin{prop}[{\cite[Proposition~1.2.23]{cox}}]
Let $ \sigma \subset N_{\mathbb{R}}  $ be a strongly convex rational polyhedral cone of
maximal dimension. Then
\[ \mathscr{H} = \{m \in S_{\sigma} | m \text{ is irreducible }\} \]
has the following properties:
\begin{enumerate}
	\item $\mathscr{H}$ is finite and generates $ S_{\sigma} $ as a semigroup.
	\item $ \mathscr{H} $ contains the ray generators of the edges of $ \sigma^\vee $.
	\item $ \mathscr{H} $ is the minimal generating set of $ S_{\sigma} $ with respect to inclusion.
\end{enumerate}
\end{prop}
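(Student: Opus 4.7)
The plan is to combine Gordan's Lemma with a simple length function to get finiteness and generation, and then use an extremality argument to pin down the edge generators.

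First I would note that, by Gordan's Lemma, $S_\sigma$ admits a finite generating set $\mathscr{A}=\{m_1,\dots,m_s\}$. Any irreducible element $m\in S_\sigma\setminus\{0\}$ can be written as $m=\sum a_i m_i$ with $a_i\in\mathbb{N}$; irreducibility forces exactly one $a_i$ to be nonzero and equal to $1$ (any other decomposition would split $m$ as a sum of two nonzero elements of $S_\sigma$). Hence $\mathscr{H}\subseteq\mathscr{A}$, proving finiteness (part of (1)). The same reasoning shows that every generating set of $S_\sigma$ contains $\mathscr{H}$, which will give minimality (3) once generation is established.

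For generation, the key point is that strong convexity of $\sigma$ (together with it being full-dimensional) forces $\sigma^\vee$ to be strongly convex as well. I would therefore pick an element $u\in N\cap \operatorname{int}(\sigma)$, so that the pairing $\ell(m):=\langle m,u\rangle$ takes values in $\mathbb{Z}_{\geq 0}$ on $S_\sigma$ and is strictly positive on $S_\sigma\setminus\{0\}$. A descent argument on $\ell(m)$ then finishes the job: if $m\in S_\sigma$ is not irreducible, write $m=m'+m''$ with $m',m''\in S_\sigma\setminus\{0\}$, note that $\ell(m')+\ell(m'')=\ell(m)$ with both summands positive, and apply induction. This writes every $m\in S_\sigma$ as a sum of irreducibles, completing (1) and hence (3).

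For (2), let $\rho$ be an edge of $\sigma^\vee$ with primitive ray generator $v\in M\cap\rho$. Suppose $v=m'+m''$ with $m',m''\in S_\sigma$. Since $\rho$ is an extreme ray of the convex cone $\sigma^\vee$, any decomposition of a point on $\rho$ as a sum of points in $\sigma^\vee$ must have both summands on $\rho$; so $m'=k v$ and $m''=\ell v$ for integers $k,\ell\geq 0$ (using primitivity of $v$), with $k+\ell=1$. Thus $\{m',m''\}=\{0,v\}$, so $v$ is irreducible, i.e.\ $v\in\mathscr{H}$. The only nontrivial fact I might need to justify carefully is the claim that decompositions of points on an extreme ray stay on that ray, which follows from the defining property of an edge as a face cut out by a supporting hyperplane whose intersection with $\sigma^\vee$ is exactly $\rho$. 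That convex-geometric step is the one I would expect to require the most care to phrase cleanly; the rest of the argument is bookkeeping with Gordan's Lemma and the length function $\ell$.
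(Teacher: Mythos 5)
Your proof is correct and follows the standard argument for this result; the paper itself does not prove it but cites \cite[Proposition~1.2.23]{cox}, and your route (Gordan's Lemma for a finite generating set, a length function $\ell(m)=\langle m,u\rangle$ with $u\in N\cap\operatorname{int}(\sigma)$ for descent, and the supporting-hyperplane argument for extreme rays) is essentially the one in that reference. One small ordering issue: your first paragraph already uses the fact that a nontrivial sum of nonzero elements of $S_\sigma$ cannot vanish (needed to guarantee that pulling $m_j$ out of $m=\sum a_im_i$ leaves a nonzero remainder and hence produces a genuine splitting), which is precisely what strong convexity of $\sigma^\vee$ and the positivity of $\ell$ give you; introducing $\ell$ first would make that step airtight rather than implicit.
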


\begin{dfn}
The set $ \mathscr{H} \subset S_{\sigma} $ is called the \textbf{Hilbert basis} of $ S_{\sigma} $ and its elements are the
\textbf{minimal generators} of $ S_{\sigma} $.
\end{dfn}

In this article, we are interested in the affine varieties given by the cone of a polytope, as defined below.
\begin{dfn}[{\cite[Definition~1.2.2]{cox}}]
	A \textbf{polytope} in $ N_{\mathbb{R}} $ is a set of the form
	\[ Q = \text{Conv}(S') = \left. \left\{ \sum_{u\in S'} \lambda_u u \right| \lambda_u \geq 0, \sum_{u\in S'} \lambda_u =1 \right\} \subset N_{\mathbb{R}} \]
	where $ S'  \subset N_{\mathbb{R}}  $ is finite. We say that $ Q $ is the \textbf{convex hull} of $ S' $.
\end{dfn}

A polytope $ Q\subset N_{\mathbb{R}} $ gives a polyhedral cone $ C(Q) \subset N_{\mathbb{R}} \times \mathbb{R} $, called the \textbf{cone of $ Q $} and defined by 
\[ C(Q) = \{\lambda(u,1) \in N_{\mathbb{R}} \times \mathbb{R} | u \in Q, \lambda \geq 0\}. \]

\begin{dfn} \label{coneQ}
	If $ \sigma = C(Q)$, where $ Q $ is a polytope,  we also refer to $ Y_{\sigma} $ as the \textbf{cone of $ Q $}.
\end{dfn}

\begin{exmp}
	Let $Q:= \text{Conv}\{ (0,0),(1,0),(0,1),(1,1) \} \subseteq \mathbb{R}^2$ and $ \sigma = C(Q)$. In this setting, we have
	\[ \sigma=\text{Cone}\{ (0,0,1),(1,0,1),(0,1,1),(1,1,1)  \} \text{ and} \]
	\[\sigma^\vee=\text{Cone}\{ (-1,0,1),(0,-1,1),(0,1,0),(1,0,0) \}.\]
	Finally, we get $Y_{\sigma}=V(xw-yz)$ by Example \ref{exmpva1}.
\end{exmp}

\begin{dfn}\label{minksum}
The Minkowski sum of subsets $ A_1,A_2 \subset M_{\mathbb{R}}  $ is
\[ A_1+A_2 = \{m_1+m_2 | m_1 \in A_1,m_2 \in A_2\}. \]
\end{dfn}

Given polytopes $ M_1 =\text{Conv}(C_1) $ and $ M_2 =\text{Conv}(C_2) $, their Minkowski sum $ Q=M_1+M_2 =
\text{Conv}(C_1 +C_2) $ is again a polytope. If $ Q=M_1 + \dots +M_k $, we say that $ M_1 + \dots +M_k $ is a \textbf{Minkowski decomposition} of $ Q. $\\

In \cite{altmann1997versal}, Altmann studied the cone $ \sigma=C(Q) $, where $ Q \subseteq \mathbb{R}^n  $ is a lattice polygon, i.e. the vertices are contained in $ \mathbb{Z}^n.$ He gave a description of a set of generators of $ \sigma^\vee  $ as follows:\\

To each $ c\in \mathbb{Z}^n $ we associate an integer by $ \eta_0(c):= \max \{
\langle c, -Q \rangle\} $. By the definition of $ \eta_0 $, we have
\[  \partial\sigma^\vee \cap  \mathbb{Z}^{n+1} = \{(c, \eta_0(c)) | c \in \mathbb{Z}^n\} . \]
Moreover, if $ c_1,\dots, c_w \in \mathbb{Z}^n \setminus \textbf{0} $ are those elements producing irreducible pairs
$ (c, \eta_0(c)) $ (i.e. not allowing any non-trivial lattice decomposition $ (c, \eta_0(c)) = (c', \eta_0(c'))+
(c'', \eta_0(c'')) $), then the elements
\[ (c_1, \eta_0(c_1)), \dots, (c_w, \eta_0(c_w)), (\textbf{0},1) \]
form a generator set for $ \sigma^\vee \cap \mathbb{Z}^{n+1} $ as a semigroup.\\

In order to study the versal deformation of $ Y_{\sigma} $, in \cite{altmann1997versal}, Altmann used the cone 
\[ \widetilde{\sigma}=\text{Cone} \left(  \bigcup_{i=1}^k((M_i \cap \mathbb{Z}^n) \times \{e_i \}) \right) \subseteq \mathbb{R}^n\times \mathbb{R}^k .\]
where $ Q=M_1 + \dots +M_k $ is a Minkowski decomposition of $ Q $.\\

In this setup, we define a function $ \phi $ that will play the same role as $ \eta_0 $.
\begin{dfn} \label{phi}
	Let $ Q=M_1 +M_2+ ... +M_k $ be a Minkowski decomposition of $ Q $, and $ n=\dim Q $. Define
	\begin{alignat*}{2}
		\phi:\mathbb{Z}^n&\longrightarrow&&\mathbb{Z}^k\\
		v&\longmapsto&&\sum_{i=1}^{k} \max\{\langle v,-M_i \rangle\} e_i
	\end{alignat*}
	where $ \{e_1,\dots,e_k\} $ is the standard basis of $ \mathbb{Z}^k $.
\end{dfn}

 \subsection{Symplectic Geometry. On base diagrams for singular Lagrangian fibrations} \label{affine}
 
 In this section, we comment on Lagrangian fibrations, as in \cite{symington71four}, and later we provide definitions for what we will call restricted almost toric fibrations.
 
 \begin{dfn}[{\cite[Definition~2.1]{symington71four}}]
	A locally trivial fibration of a symplectic manifold is a \textbf{regular Lagrangian fibration} if the fibres are smooth Lagrangians. A map $ \widetilde{\pi}:M^{2n} \to \widetilde{B}^n $ is a \textbf{Lagrangian fibration} if it restricts to a regular Lagrangian fibration over $ \widetilde{B} \setminus \widetilde{\Sigma} $, where $ \widetilde{B} \setminus \widetilde{\Sigma} $ is an open dense set of $ \widetilde{B} $.
 \end{dfn}
 
  \begin{dfn}[{\cite[Definition~2.3]{miranda2020geometric}}] \label{defsinglagfib}
	A \textbf{singular Lagrangian fibration} of a symplectic manifold
$(M, \omega)$ of dimension $2n$ is a surjective map $\pi: M \to B$, where
$B$ is a topological space of dimension $n$, such that for every point in $B$ there exist an open neighborhood $V \subset B$ and a homeomorphism $\chi: V \to U \subset \mathbb{R}^{n}$ satisfying that $\chi \circ \pi|_{\pi^{-1}(V)} $ is an integrable system on $(\pi^{-1}(V), \omega|_{\pi^{-1}(V )})$.
 \end{dfn}

 \begin{rmk}
	We assume that our fibres are compact and connected, then by Arnold-Liouville, the regular fibres are isomorphic to $ T^n $.
\end{rmk}
 
 The action coordinates, given by flux, allow us to locally identify an open simply connected neighborhood of $ b\in \widetilde{B}\setminus \widetilde{\Sigma} $ with an open set in $ H^1(F_b;\mathbb{R}) $. Hence, $ T_bB \cong H^1(F_b;\mathbb{R}) $ endows a lattice $ H^1(F_b;\mathbb{Z}) $, inducing an affine structure  in $ \widetilde{B}\setminus \widetilde{\Sigma} $.\\
 
 As in \cite{symington71four}, we will consider cuts in the base to obtain an affine embedding to $\mathbb{R}^n$ in the complement of the cuts. The topology of these cuts will be the topology of a Whitney stratified space.
 
 \begin{dfn}
 	A \textbf{cover} of a set $X$ is a collection of subsets of $X$ whose union is all of $X$.
 \end{dfn}
 
\begin{dfn} [{\cite[\textsection 5]{mather1970notes}}]
	A \textbf{stratification} $\mathscr{X}$ of $X\subseteq M$ is a cover of $X$ by pairwise disjoint smooth submanifolds $X_{\alpha}$, $\alpha \in A$.   
\end{dfn}
Let $\Delta_{M}$ be the diagonal subset of $M\times M$. Denote by $F(M)$ the blowing up of $M\times M$ along $\Delta_{M}$. We will use the following identity
\[F(M)=PTM \sqcup ((M\times M) \setminus \Delta_{M}), \]
where $PTM$ denotes the projective tangent bundle of $M$.
\begin{dfn} [{\cite[\textsection 5]{mather1970notes}}]
	$\mathscr{X}$ is a \textbf{Whitney stratification} if it satisfies:
	\begin{enumerate}
		\item (Locally finite) Each point $x \in M$ has a neighborhood $U_x$ such that $U_x \cap X_{\alpha} \neq \emptyset$ for at most finitely many $\alpha \in A$.
		\item (Condition of the frontier) For each $\alpha \in A$, its frontier $(\overline{X_{\alpha}} \setminus X_{\alpha})\cap X= \bigcup_{\beta \in B}X_{\beta}$ for some $B \subseteq A$.
		\item (Whitney's condition B) Let $\{x_i\}$ be a sequence of points in $X_{\alpha}$ and $\{y_i\}$ be a sequence of points in $X_{\beta}$ such that $x_i\neq y_i$. If $\{x_i\}\to y$, $\{y_i\} \to y$, $\{(x_i,y_i)\}$ converges to a line $l\subseteq PTM_y$ in $F(M)$, and $\{ {TX_\alpha}_{x_i} \}$ converges to an $\dim X_\alpha$-plane $\tau \subseteq TM_y$. Then $l\subset \tau$.
	\end{enumerate}
\end{dfn}

\begin{dfn}
	A \textbf{Whitney stratified space} $X$ of $M$ is a subset of $M$ with a Whitney stratification. The dimension of $X$ is defined as the maximum of the dimensions of $X_\alpha$ in the stratification of $X$.
	\end{dfn}

\begin{dfn}
	 A Lagrangian fibration $ \widetilde{\pi} $ admits a \textbf{convex base diagram} if there exists a homeomorphism $ \psi:\widetilde{B}\to B \subseteq  \mathbb{R}^n $ such that $ \pi= \psi \circ \widetilde{\pi}: M \to \mathbb{R}^n $ satisfies that there exists a collection $ \mathscr{C} \supseteq \widetilde{\Sigma} $ of codimension one Whitney stratified spaces, called cuts, such that the symplectic affine structure on $ \psi(\widetilde{B}\setminus \mathscr{C}) $ agrees with the standard affine structure on $ \mathbb{R}^n $.
\end{dfn}

\begin{rmk}
	Almost toric fibrations over disks defined in \cite{symington71four} admit a convex base diagram by taking cuts on eigendirections.
\end{rmk}

\begin{dfn} \label{rest}
	A \textbf{restricted almost-toric fibration} is a singular Lagrangian fibration that admits a convex base diagram whose singular fibres are toric, homeomorphic to $T^n/T^k$ (the group quotient that appears as the standard local model in toric fibrations), or homeomorphic to $T^n/\mathfrak{T}^k$ (the quotient space obtained by collapsing the $k$-cycle $\mathfrak{T}^k$).
\end{dfn}

\begin{rmk}
	There exist ideas of what an almost-toric fibration should be in higher dimensions, in particular, including singular fibres described by Matessi and Castaño-Bernard \cite{bernard2009lagrangian}. Not all are like our local model, that is why we named these restricted.
\end{rmk}

The Lagrangian fibrations we will consider in this paper all have a convex base diagram satisfying the following assumptions on its cuts.

\begin{asmp}
	Let $ \pi: M^{2n} \to B \subseteq \mathbb{R}^n $ be a convex base diagram for a singular Lagrangian fibration. Let $ \Sigma $ be the image of singular fibres, $ S_i $'s the connected components of $ \Sigma=\bigcup_{i}S_i $, and $\mathscr{C}=\bigcup_iC_i\subseteq B$ is the collection of cuts. Assume that each cut $ C_i $ is the cone of $ S_i $ with respect to some $ v_i \in \mathbb{R}^n $, i.e., 
	\[ C_i= \{x+tv_i| x\in S_i, t\in \mathbb{R}_{\geq 0}\} \cap \text{Im}(\pi). \]
	Also that the image of $ C_i $ in $ \mathbb{R}^{n-1}=v_i^{\bot} $ under the projection with respect to $ v_i $ is of the form
	\[ \left. \left\{  \sum_{\sigma \in \mathscr{A}_i}\lambda_{\sigma} \sigma \right| \lambda_{\sigma} \geq 0 \right\}, \]	
	for some finite set $ \mathscr{A}_i \subseteq \mathbb{R}^{n-1} $, satisfying the balancing condition $ \sum_{\sigma \in \mathscr{A}_i} \sigma =0 $. Let $C_i^{\mathbb{R}} = \{x+tv_i|t\in \mathbb{R}\} $ and $ \text{Im}(\pi) \setminus C_i^{\mathbb{R}} = \bigcup_{j=0}^N D_{i,j} $. We assume that $\overline{D_{i,k}} \cap \overline{D_{i,l}} \neq \emptyset, \forall k,l$.
\end{asmp}

\begin{exmp}[{\cite[\textsection 3]{auroux2009special}}] \label{examplexyz}
	Consider the complex fibration
	\begin{align*}
		 f: \mathbb{C}^3\longrightarrow &\mathbb{C}\\
		(x,y,z)\longmapsto &xyz.
	\end{align*}
	There is an action of $T^2$ on each fibre of $f$, let $\mu$ be its moment map and let $\gamma(r)\subseteq \mathbb{C}$ be the circle with center in $1$ and radius $r$. Given real numbers $\delta_1$ and $\delta_2$, we define:
	\[T_{\gamma(r),\delta_1,\delta_2}= f^{-1}(\gamma(r)) \cap \mu^{-1}(\delta_1,\delta_2).\]
	$T_{\gamma(r),\delta_1,\delta_2} $ is an embedded Lagrangian torus in $\mathbb{C}^3$, except possibly when $0\in\gamma(r)$ or in the limit when $r=0$ and we have an isotropic $T^2$.\\
	We have collapsing cycles in the following cases:
\begin{enumerate}
	\item When $x=y=0$ and $z\neq 0$, the collapsing cycle is $\theta_1-\theta_2$.
	\item When $x=z=0$ and $y\neq 0$, the collapsing cycle is $\theta_1$.
	\item When $y=z=0$ and $x\neq 0$, the collapsing cycle is $\theta_2$.
	\item When $x=y=z=0$, all the $T^2$ collapses.
\end{enumerate}
The convex base diagram produced by this singular Lagrangian fibration will have codimension one cuts, in the complement of which we will have a toric structure.  If $ r<1 $, $ T_{\gamma(r),\delta_1,\delta_2} $ determine a toric structure, and we can take action coordinates $ (\lambda_1,\lambda_2, \lambda_3) $ given by the symplectic flux concerning a reference fibre $ T_{\gamma(r_0),0,0} $, as in the action-angle coordinates given in \cite{duistermaat1980global}. Considering the limit when $ r_0=0 $, we can interpret $ \lambda_3 $ as the symplectic area of a disk with boundary in a cycle of $ T_{\gamma(r),\delta_1,\delta_2} $ that collapses as $ r \to 0 $.\\
For $ r\geq 1 $ we then consider cuts (i.e., we disregard fibres) for $ \lambda_1 \geq 0, \lambda_2=0 $, or $ \lambda_1=0, \lambda_2\geq 0 $, or $ \lambda_1=\lambda_2\leq 0 $. This way, we killed monodromies around singular fibres and, hence, possible ambiguity to extend $ \lambda_3 $ via symplectic flux. Note that we can extend $ \lambda_3 $ to a continuous (but not smooth) function on the whole $ \mathbb{C}^3$.\\
We call the image $B$ of $\pi:\mathbb{C}^3\to \mathbb{R}^3$ given by $(\lambda_1,\lambda_2,\lambda_3)$ and denote by $\Sigma$ the locus on $B$ corresponding to the singular Lagrangian fibres.
  Let $S_1$ be the image of the singular fibres corresponding to the case when $0\in\gamma(r)$. The cone $C_1$ is defined with $v_1=(0,0,1)$. We obtain the left diagram of Figure \ref{fig:firstex} as the convex base diagram with cuts.

\begin{figure}[h!]
	
	\centering
\begin{tikzpicture}
	\begin{axis}
		[scale=0.6,
		axis x line=center,
		axis y line=center,
		axis z line=middle,
		zmin=0.0,
		zmax=3.5,
		ticks=none,
		enlargelimits=false,
		view={-30}{-45},
		xlabel = \(\lambda_1\),
		ylabel = {\(\lambda_2\)},
		zlabel = {\(\lambda_3\)}
		]
		\addplot3[name path=A,color=blue,domain=0:10,samples y=1] (x,0, {exp(-x^2/10)});
		\addplot3[name path=C,color=blue,domain=0:10,samples y=1] (0,x, {exp(-x^2/10)});
		\addplot3[name path=D,color=blue,domain=0:10,samples y=1] (-x,-x, {exp(-x^2/10)});
		\addplot3[name path=B,variable=t,smooth,draw=none,domain=0:10]  (t,0, 3);
		\addplot3[name path=E,variable=t,smooth,draw=none,domain=0:10]  (0,t, 3);
		\addplot3[name path=F,variable=t,smooth,draw=none,domain=0:10]  (-t,-t, 3);
		\addplot3 [blue!30,fill opacity=0.5] fill between [of=A and B];
		\addplot3 [blue!30,fill opacity=0.5] fill between [of=C and E];
		\addplot3 [blue!30,fill opacity=0.5] fill between [of=D and F];
		
	\end{axis}
\end{tikzpicture}
\begin{tikzpicture}
	\begin{axis}
			[scale=0.65,
			hide axis,
			zmin=0.0,
			zmax=3.5,
			ticks=none,
			enlargelimits=false,
			view={-15}{-45}
			]
			\addplot3[name path=A,color=blue,domain=0:10,samples y=1] (x,0, {exp(-x^2/10)});
			\addplot3[name path=C,color=blue,domain=0:10,samples y=1] (0,x, {exp(-x^2/10)});
			\addplot3[name path=D,color=blue,domain=0:7,samples y=1] (-x,-x, {exp(-x^2/8)+x});
			\addplot3[name path=B,variable=t,domain=0:10]  (t,0, 0) node [pos=0.997] {$ \begin{pmatrix}
					1\\0\\0
				\end{pmatrix} $};
			\addplot3[name path=E,variable=t,domain=0:10]  (0,t, 0) node [pos=0.992] {$ \begin{pmatrix}
					0\\1\\0
				\end{pmatrix} $};
			\addplot3[name path=F,variable=t,domain=0:7]  (-t,-t, t) node [pos=0.007] {$ \begin{pmatrix}
					-1\\-1\\1
				\end{pmatrix} $};
			\addplot3 [blue!30,fill opacity=0.5] fill between [of=A and B];
			\addplot3 [blue!30,fill opacity=0.5] fill between [of=C and E];
			\addplot3 [blue!30,fill opacity=0.5] fill between [of=D and F];
			
		\end{axis}

\end{tikzpicture}
\caption{Convex base diagrams corresponding to the restricted Lagrangian fibration in Example \ref{examplexyz}}
\label{fig:firstex}
\end{figure}
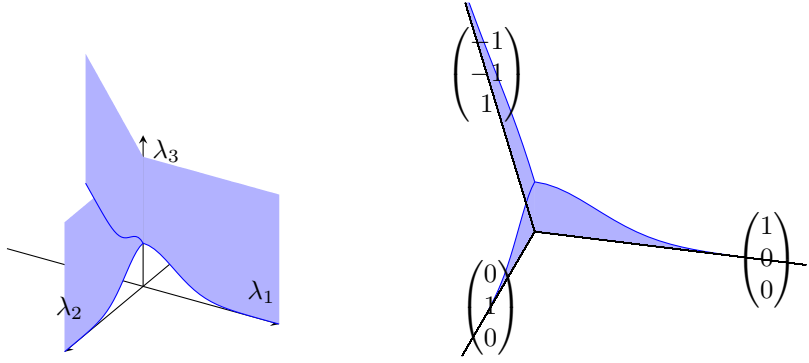

\end{exmp}

\begin{dfn}
	An element $ [\gamma] \in \pi_1(B \setminus \Sigma,b) $ gives us an isotopy class of self-diffeomorphism of the torus fibre $ F_b $. This self-diffeomorphism induces an automorphism on $ H_1(F_b,\mathbb{Z}) $. We call the map from $\pi_1(B\setminus \Sigma)$ to $\text{Aut}(H_1(F_b))$ \textbf{topological monodromy}. A choice of basis for $H_1(F_b)$ allows us to see the topological monodromy as a map from $\pi_1(B\setminus \Sigma)$ to $GL(n,\mathbb{Z})$.
\end{dfn}

To study the topological monodromy of our restricted Lagrangian fibration, consider $  \widetilde{C}_i= \{x-tv_i| x\in S_i, t\in \mathbb{R}_{\geq 0}\} \cap \text{Im}(\pi),   C_i^{\mathbb{R}} = \{x+tv_i|t\in \mathbb{R}\} $ and $ \text{Im}(\pi) \setminus C_i^{\mathbb{R}} = \bigcup_{j=0}^N D_{i,j} $. Fix $ D_{i,0} $, and by the assumption, all $ D_{i,j} $ are adjacent to $ D_{i,0} $. For all $ j=1,\dots,N $, consider the loop $ \mathfrak{b}_ {i,j} \subset \overline{D_{i,0}} \cup \overline{D_{i,j}} $, passing through $ C_i $ once, through $ \widetilde{C}_i $ once, and with orientation given by passing through $ C_i $ from $ D_{i,0} $ to $ D_{i,j}$. Finally, let $  M_{\mathfrak{b}_ {i,j}} $ be the topological monodromy around the loop $ \mathfrak{b}_{i,j}$.\\

Because $ H^1(F_b, \mathbb{Z})=\hom(H_1(F_b,\mathbb{Z}),\mathbb{Z}) $, we see that the affine monodromy (the monodromy for the affine structure in $ B\setminus \Sigma $) concerning each loop $ \mathfrak{b}_ {i,j} \in \pi_1(B\setminus \Sigma) $ is given by the transpose inverse of the monodromies $ M_{\mathfrak{b}_ {i,j}} $. Denote the affine monodromy by $ M_{\mathfrak{b}_ {i,j}}^{\text{af}} $.\\

We now generalize the notion of transferring the cut move defined in \cite{vianna2016infinitely}. We want to change the direction of all the cuts associated with a connected family of singularities, obtaining a different convex base diagram for the same singular Lagrangian fibration.

\begin{dfn} \label{tcut}
	A \textbf{transferring the cut} operation with respect to $ C_i $ consists in applying a piecewise linear map $ \psi_i: \mathbb{R}^n \to \mathbb{R}^n $, preserving $ C_i $, obtaining $ \pi_i = \psi_i \circ \pi : M \to \mathbb{R}^n $ with associated cuts $C'_j $ such that $ C'_j = \psi_i(C_j) $ for $j\neq i$ and $C_i'=\widetilde{C_i}$. The map $ \psi_i $ is defined by applying $ M_{\mathfrak{b}_ {i,j}}^{\text{af}} $ to $ \overline{D_{i,j}} $ for all $j$. Note that this operation fixes $ C_i^{\mathbb{R}} $ and that as a result of applying the monodromies, the affine structure of $ \pi_i $ agrees with the affine structure of $ \mathbb{R}^n $ on $ C_i \setminus \bigcup_{j \neq i} \psi_i (C_j) \subseteq \text{Im} (\pi_i) $, but is not defined in $C_i'$. 
\end{dfn}
	
\begin{exmp}
	The two convex base diagrams in Figure \ref{fig:firstex} are related by transferring the cut operation.
\end{exmp}
\section{Toy example} \label{examples} \label{Q5}
	
In this section, we will work in detail on an example to illustrate how to obtain a restricted Lagrangian fibration on the smoothing of a cone associated with a toric manifold $ Q $.  This will help us settle notation and introduce concepts used in the statement and proof of the main theorems.\\

	Consider $ Q_5:= \text{Conv}\{ (0,0),(1,0),(0,1),(2,1),(1,2) \} \subset \mathbb{R}^2 $ and the following  Minkowski decomposition:
	\[ \vcenter{\hbox{\begin{tikzpicture}
				\draw[fill=gray!25] (0, 0)  -- (1,0) -- (2,1) --(1,2)-- (0,1) -- cycle;
				\path (1,1) -- (0,0) node[pos=1.3]{$ (0,0) $};
				\path (1,1) -- (1,0) node[pos=1.3]{$ (1,0) $};
				\path (1,1) -- (0,1) node[pos=1.5]{$ (0,1) $};
				\path (1,1) -- (2,1) node[pos=1.5]{$ (2,1) $};
				\path (1,1) -- (1,2) node[pos=1.3]{$ (1,2) $};
	\end{tikzpicture}}} = \vcenter{\hbox{\begin{tikzpicture}
	\draw[fill=gray!25] (0, 0)  -- (1,0) -- (0,1) -- cycle;
	\path (1,1) -- (0,0) node[pos=1.3]{$ (0,0) $};
	\path (1,1) -- (1,0) node[pos=1.3]{$ (1,0) $};
	\path (0.5,0.5) -- (0,1) node[pos=1.5]{$ (0,1) $};
\end{tikzpicture}}} + \vcenter{\hbox{\begin{tikzpicture}
\draw[-] (0, 0)  -- (1,1);
\path (1,1) -- (0,0) node[pos=1.3]{$ (0,0) $};
\path (0.5,0.5) -- (1,1) node[pos=1.3]{$ (1,1) $};
\end{tikzpicture}}}\]
We write $ Q_5 =M_1 +M_2$, where $ M_1 =\text{Conv}\{ (0,0),(1,0),(0,1)\} $ and $ M_2 = \text{Conv}\{ (0,0),(1,1) \}  $. Let $ \sigma = C(Q_5) \subseteq N_{\mathbb{R}} \times \mathbb{R} \cong \mathbb{R}^3 $  (see Definition \ref{coneQ}), and $ Y_{\sigma} $ the associated affine toric variety, which is singular. Note that
\[ \sigma = \text{Cone} (\{ (0,0,1),(1,0,1),(0,1,1),(2,1,1),(1,2,1)  \}).  \]

Following \cite{altmann1997versal}, we will describe a way to obtain a deformation of the cone $ Y_{\sigma} $ from the Minkowski decomposition $ Q=M_1+M_2 $. We begin by considering $\mathscr{A}=((M_1\cap \mathbb{Z}^2) \times \{e_1 \}) \cup  ((M_2\cap \mathbb{Z}^2) \times \{e_2 \}) \subseteq \mathbb{Z}^2\times \mathbb{Z}^2$ and  $ \widetilde{\sigma}=\text{Cone} \left(  \mathscr{A} \right) $
where $ \{e_1,e_2\} $ denotes the standard basis of $ \mathbb{Z}^2 $. We see that
\[ \widetilde{\sigma}= \text{Cone} (\{ (0,0;1,0),(1,0;1,0),(0,1;1,0),(0,0;0,1),(1,1;0,1) \}).  \]
It contains the cone $ \sigma  $ via the diagonal embedding $ \mathbb{R}^{2}\times\mathbb{R} \hookrightarrow \mathbb{R}^{2}\times\mathbb{R}^2 $ given by $ (a,1) \mapsto (a,1,1). $\\

We use the computer program Normaliz \cite{normaliz} to obtain the Hilbert Basis of $ S_{\widetilde{\sigma}} = \sigma^\vee \cap \mathbb{Z}^4 $:
\begin{align*}
\mathscr{H}_{\widetilde{\sigma}} = &\{ (-1,-1,1,2), (-1,0,1,1) ,(-1,1,1,0), (0,-1,1,1),\\
 &(1,-1,1,0), (1,0,0,0),(0,1,0,0),(0,0,1,0),(0,0,0,1) \}.
\end{align*}

To understand better Altmann's deformation of $Y_{\sigma}$, we will describe it using characters associated with $Y_{\widetilde{\sigma}}$, see also \cite[\textsection 3 ]{gross2001examples}. In addition,  we will systematically associate some elements  of the Hilbert Basis $ \mathscr{H}_{\widetilde{\sigma}} $ with each term of the Minkowski decomposition in the following manner: To $ M_i $, we associate a $ \dim M_i\times 2 $ matrix $ V_i $, such that the rows of $ V_i $ are the coordinates of the non-zero vertices of $ M_i $:     
      \[   V_1= \begin{pmatrix}
	1 & 0 \\
	0 & 1
\end{pmatrix} \text{,} \quad V_2=\begin{pmatrix}
1 & 1
\end{pmatrix}. \]

Each $V_i$ can be completed to a $2\times 2$ matrix $X_i$ such that the rows of $X_i$ form a basis of $\mathbb{Z}^2$.  Let $Z_i=X_i^{-1}$, define $A_i$ a $2 \times \dim M_i$ matrix and, when $\dim M_i <2$, $C_i$ a $2 \times (2-\dim M_i)$ matrix given by $Z_i=(A_i \quad C_i)$. In our case, we choose 
\[ X_1=Z_1=A_1= \begin{pmatrix}
	1&0 \\
	0&1
\end{pmatrix} \text{,} \quad X_2=Z_2 = \begin{pmatrix}
1 & 1 \\
0 & -1
\end{pmatrix}, \quad A_2=\begin{pmatrix}
	1\\
	0
\end{pmatrix}, \quad C_2=\begin{pmatrix}
	1\\-1
\end{pmatrix} . \]

Recall our definition of the map $ \phi:\mathbb{Z}^2 \to \mathbb{Z}^2 $ associated to a Minkowski decomposition (Definition \ref{phi}) and let $ a_{i,j} \in \mathbb{Z}^2 $ be the $ j $-th column of $ A_i$, then we associate $ \widetilde{a}_{i,j}=(a_{i,j},\phi(a_{i,j})) \in \mathbb{Z}^{4}$  to $ a_{i,j} $. In the same way, let $c_{i,l}$ be the $ l $-th column of $ C_i$ and associate $ (\widetilde{c}^+)_{i,l}=(c_{i,l},\phi(c_{i,l})) \in \mathbb{Z}^{4} $,  $ (\widetilde{c}^-)_{i,l}=(-c_{i,l},\phi(-c_{i,l})) \in \mathbb{Z}^{4}$ to $c_{i,l}$. We also associate to $ M_i $ the vector:
\[\widetilde{b}_i = \left(-\sum_{j=1}^{\dim M_i} a_{i,j}, \phi \left(-\sum_{j=1}^{\dim M_i} a_{i,j} \right) \right) \in \mathbb{Z}^{4}.\]

Set
\[ \mathscr{A}_1=\left\{ \widetilde{a}_{1,1}= \begin{pmatrix}
	1 \\ 0 \\ 0\\0
\end{pmatrix} \text{, }
\widetilde{a}_{1,2}= \begin{pmatrix}
	0 \\ 1 \\ 0\\0
\end{pmatrix} \text{, }
 \widetilde{b}_1= \begin{pmatrix}
 	-1 \\ -1 \\ 1\\2
 \end{pmatrix}\right\}, \]
associated with $ M_1 $,
\[ \mathscr{A}_2= \left\{    
\widetilde{a}_{2,1}= \begin{pmatrix}
	1 \\ 0 \\ 0\\0
\end{pmatrix} \text{, }
\widetilde{b}_2= \begin{pmatrix}
	-1 \\ 0 \\ 1\\1
\end{pmatrix} \text{, }
(\widetilde{c}^+)_{2,1}= \begin{pmatrix}
	1 \\ -1 \\ 1\\0
\end{pmatrix} \text{, }
(\widetilde{c}^-)_{2,1}= \begin{pmatrix}
	-1 \\ 1 \\ 1\\0
\end{pmatrix}
\right\},
  \]
  associated with $ M_2 $, and note that: 
\[ \mathscr{H}_{\widetilde{\sigma}}= \mathscr{A}_1 \cup \mathscr{A}_2 \cup \left\{
\begin{pmatrix}
	0 \\ -1 \\ 1 \\ 1
\end{pmatrix}\text{, }
 \begin{pmatrix}
	0 \\ 0 \\ 1 \\ 0
\end{pmatrix}\text{, }
 \begin{pmatrix}
	0 \\ 0 \\ 0 \\ 1
\end{pmatrix}\right\}.\]

We now associate the characters $ z=\chi ^{(0,-1,1,1)} $, $ x_{i,j}=\chi^{\widetilde{a}_{i,j}} $, $ y_i= \chi^{\widetilde{b}_i}$, $ w^+_{i,j}=\chi^{(\widetilde{c}^+)_{i,j}} $, $ w^-_{i,j}=\chi^{(\widetilde{c}^-)_{i,j}} $, $ t_1=\chi^{(0,0,1,0)} $, and $ t_2=\chi^{(0,0,0,1)} $, to each element of the Hilbert basis $ \mathscr{H}_{\widetilde{\sigma}} $. The characters $ t_i $, associated with the vectors in $ M_i\times \mathbb{Z}^2 $ of the form $ (0,0,e_i) $, will be viewed as deformation parameters of $ Y_{\sigma} $.\\

 Let $ Y_{\widetilde{\sigma}} = \text{Spec}(\mathbb{C}[S_{\widetilde{\sigma}}]) = Y_{\mathscr{H}_{\widetilde{\sigma}}} $, then the ideal $ I(Y_{\widetilde{\sigma}}) $ is generated by: 

\begin{equation} \label{eqwide}
	\begin{aligned}
		&x_{1,1}t_1- x_{1,2}w_{2,1}^+  &\qquad &x_{1,1}y_1-zt_2  &\qquad  &x_{1,1}y_2  - t_1t_2\\
		&x_{1,1}w_{2,1}^--x_{1,2}t_1  &\qquad  &x_{1,1}z-w_{2,1}^+t_2 &\qquad  &x_{1,2}y_1  - y_2t_2\\
		&x_{1,2}y_2  - w_{2,1}^-t_2  &\qquad  &x_{1,2}z  - t_1t_2  &\qquad  &w_{2,1}^+w_{2,1}^-  - t_1^2\\
		&y_2z  -y_1t_1 &\qquad &w_{2,1}^- z  - y_2t_1 &\qquad &y_2w_{2,1}^+ - zt_1 \\
		&y_1w_{2,1}^-  -y_2^2 &\qquad &y_1w_{2,1}^+  - z^2 &\qquad &y_2^2w_{2,1}^+ - w_{2,1}^- z^2.
	\end{aligned}
\end{equation}

 The inclusion $ \sigma \subseteq \widetilde{\sigma} $ induces an embedding $ Y_{\sigma} \subseteq Y_{\widetilde{\sigma}} $ such that $ Y_{\sigma} = Y_{\widetilde{\sigma}} \cap \{t_1-t_2=0)\} $, as it is described in \cite{altmann1997versal}.  Consider now $ \Psi: Y_{\widetilde{\sigma}} \to \mathbb{C} $ given by $ t_1-t_2 $, and $ Y_{\widetilde{\sigma},\epsilon}:=\Psi^{-1}(\epsilon)  $ with $ \epsilon \neq 0 $, a deformation of $ Y_{\sigma} $ (see \cite[\textsection 3 ]{gross2001examples}). In this case, $Y_{\widetilde{\sigma},\epsilon}$ is smooth. We aim to study the complex fibration $ f: Y_{\widetilde{\sigma},\epsilon} \to \mathbb{C}  $ given by the projection to $ t_1 $, which we will eventually use as an auxiliary fibration for constructing a singular Lagrangian fibration as in \cite{auroux2007mirror,auroux2009special,vianna2014exotic}.\\

Note that the singular fibres of $ f $ lie over $ t_1=0 $ and $ t_1=\epsilon $. Indeed, if $ t_1 \neq 0, \epsilon $, $ f^{-1}(t_1) \cong (\mathbb{C}^*)^2 $ because
\[ x_{1,1}y_2=t_1t_2\neq 0,~ w^+_{2,1}w^-_{2,1}=t_1^2 \neq 0, \]
and from the other relations in (\ref{eqwide}), we can show that all the characters in $ \mathscr{H}_{\widetilde{\sigma}} $ will depend only in the non-zero characters $ x_{1,1}$ and $w_{2,1}^+ $.\\

We then look at the singular fibre $ f^{-1}(0) $ and identify it with a subset of $ \mathbb{C}^3 $ given by:   \[x_{1,1}x_{1,2}y_1=t_1t_2^2=0.\]
All the characters in $ \mathscr{H}_{\widetilde{\sigma}} $ will depend only in $ x_{1,1}, x_{1,2}, y_1 $ by the relations:
\[ y_2=x_{1,2}y_1(-\epsilon)^{-1},~ w^+_{2,1}=x_{1,1}^2y_1(\epsilon)^{-2},~ w^-_{2,1}=x_{1,2}^2y_1(\epsilon)^{-2},~z=x_{1,1}y_1(-\epsilon)^{-1}, \]
obtained from (\ref{eqwide}).\\

We identify the singular fibre $ f^{-1}(\epsilon) $ with a subset of $ \mathbb{C}^2 \times \mathbb{C}^* $ given by:   
\[x_{1,1}y_2=t_1t_2=0,~ w^+_{2,1}w^-_{2,1}=t_1^2= \epsilon^2.  \]
All the characters in $ \mathscr{H}_{\widetilde{\sigma}} $ will depend only in $ x_{1,1}, y_2, w^+_{2,1} $ by the relations:
\[x_{1,2}=x_{1,1}w^-_{2,1}\epsilon^{-1},~y_1=y_2^2 w^+_{2,1}\epsilon^{-2},~ z=y_2 w^+_{2,1} \epsilon^{-1}, \]

obtained from (\ref{eqwide}).\\

Now that we understand the complex fibration $ f $, we will construct a Lagrangian fibration associated with it using the same methods used in \cite{auroux2007mirror, auroux2009special, vianna2014exotic}. First, we endow $ Y_{\widetilde{\sigma},\epsilon} $ with the symplectic form coming from the embedding as a subset of $ \mathbb{C}^{|\mathscr{H}_{\widetilde{\sigma}}|} $. Then, we study the action of $ S^1 \times S^1 $ on each fibre of $ f $ given by  
\[ (x_{1,1},x_{1,2},y_1,y_2,w^+_{2,1},w^-_{2,1},z)  \to \] 
\[  (e^{i\theta_1}x_{1,1},e^{i\theta_2}x_{1,2},e^{-i\theta_1}e^{-i\theta_2}y_1,e^{-i\theta_1}y_2, e^{i\theta_1}e^{-i\theta_2}w^+_{2,1},e^{-i\theta_1}e^{i\theta_2}w^-_{2,1},e^{-i\theta_2}z). \]
The moment maps corresponding to the action of $ S^1\times \{1\} $ and $ \{1\} \times S^1 $ are:
\[ \lambda_1(x_{1,1},x_{1,2},y_1,y_2,w^+_{2,1},w^-_{2,1},z) =(|x_{1,1}|^2+|w_{2,1}^+|^2-|y_1|^2-|y_2|^2 - |w_{2,1}^-|^2)/2 \]
\[ \lambda_2(x_{1,1},x_{1,2},y_1,y_2,w^+_{2,1},w^-_{2,1},z) =(|x_{1,2}|^2+|w_{2,1}^-|^2-|z|^2-|y_1|^2 - |w_{2,1}^+|^2)/2 \]
respectively.\\

We will consider a Lagrangian torus, which is contained in $ f^{-1}(\gamma(r)) $ for the circle $ \gamma (r) \subset \mathbb{C} $ with center in $ 1 $ and radius $ r $, and consists of a single $ S^1\times S^1$-orbit inside each fibre of a point of $ \gamma(r) $. Our Lagrangian torus will be given by symplectic parallel transport of the $ T^2 $ orbits along $ \gamma(r) $. 

\begin{dfn}\label{s}
	Given the circle $ \gamma (r) \subset \mathbb{C} $ and real numbers $ \delta_1,\delta_2 $, we define
	\[ T_{\gamma(r),\delta_1,\delta_2}= \{(x_{1,1},x_{1,2},y_1,y_2,w^+_{2,1},w^-_{2,1},z) \in f^{-1}(\gamma(r))| \lambda_1=\delta_1, \lambda_2=\delta_2\}. \]
\end{dfn}

	We showed above that the general fibre of $ f $ is isomorphic to $ (\mathbb{C}^*)^2 $, then $ T_{\gamma(r),\delta_1,\delta_2} $ is an embedded Lagrangian torus in $ Y_{\widetilde{\sigma}} $, except possibly when $ 0 \in \gamma(r) $, $ \epsilon \in \gamma(r) $, or in the limit when $ r=0 $ and we have an isotropic $ T^2 $. Let $ \lambda =2(\lambda_1,\lambda_2) $,
	
	\begin{align*}
		\lambda=& \begin{pmatrix}
		1\\
		0
	\end{pmatrix} |x_{1,1}|^2 + \begin{pmatrix}
	0\\
	1
\end{pmatrix} |x_{1,2}|^2 +\begin{pmatrix}
-1\\
-1
\end{pmatrix} |y_1|^2 + \begin{pmatrix}
-1 \\
0
\end{pmatrix} |y_2|^2 \\&+ \begin{pmatrix}
1\\
-1
\end{pmatrix} |w^+_{2,1}|^2 +\begin{pmatrix}
-1\\
1
\end{pmatrix} |w^-_{2,1}|^2+ \begin{pmatrix}
0\\
-1
\end{pmatrix} |z|^2.
	\end{align*}

We assume $ \epsilon,0 $ are not in the same circle $ \gamma(r) $. Let us now understand the singular Lagrangians for when $ 0\in \gamma (r) $.  Recall that we identify $ f^{-1}(0) $ as a subset of $ \mathbb{C}^3 $ given by
\[ x_{1,1}x_{1,2}y_1=0. \]

Note that if two of the characters $ x_{1,1},x_{1,2},y_1 $ are non-zero, we have a $ T^2 $-orbit, and hence the Lagrangian passing through it is a smooth $ T^3 $. When $ x_{1,1}=x_{1,2}=y_1=0 $, the whole $ T^2 $ collapses. Let us analyze the remaining cases, identifying the cycle in $ T^2 $ that collapses as we approach one of these singular points.\\

When $ x_{1,1}=y_1=0 $ and $ x_{1,2} \neq 0 $, looking at equations (\ref{eqwide}) we see that:
\[ \lambda= \begin{pmatrix}
	0\\
	1
\end{pmatrix} |x_{1,2}|^2. \]
So the collapsing class is the cycle generated by the $\theta_1$ coordinate of the $T^2$ action. We will call it $[\theta_1]$ when viewed in $H_1(T^3,\mathbb{Z}) \supset H_1(T^2,\mathbb{Z})$. Here we think of $T^3$ as a reference Lagrangian fibre with $H_1(T^3,\mathbb{Z})= \langle [\theta_1],[\theta_2],[\theta_3] \rangle$, where $[\theta_3]$ is a cycle corresponding to a choice of a lift of the curve $\gamma$ on the base.\\
When $ x_{1,2}=y_1=0 $ and $ x_{1,1} \neq 0 $, we see that
\[ \lambda= \begin{pmatrix}
	1\\
	0
\end{pmatrix} |x_{1,1}|^2, \]
so the collapsing class is $ [\theta_2] $.\\
When $ x_{1,2}=x_{1,1}=0 $ and $ y_1 \neq 0 $, we see that
\[ \lambda= \begin{pmatrix}
	-1\\
	-1
\end{pmatrix} |y_1|^2, \]
so the collapsing class is $ [\theta_1-\theta_2] $.\\

If $ \epsilon \in \gamma(r) $, recall that we identify $ f^{-1}(\epsilon) $ as a subset of $ \mathbb{C}^2 \times (\mathbb{C})^* $ given by
\[ x_{1,1}y_2=0,~ w^+_{2,1}w^-_{2,1} \neq 0.\]
Note that if one of the characters $ x_{1,1}, y_2 $ is non-zero, we have a $ T^2 $-orbit, and hence the Lagrangian passing through it is a smooth $ T^3 $. We will analyze the remaining case identifying the cycle in $ T^2 $ that collapses as we approach a point for which $ x_{1,1}=y_2=0 $. In this case: 
\[ \lambda= \begin{pmatrix}
	1\\
	-1
\end{pmatrix} |w^+_{2,1}|^2 +\begin{pmatrix}
	-1\\
	1
\end{pmatrix} |w^-_{2,1}|^2, \]
so the collapsing class is $ [\theta_1+\theta_2] $.\\

In summarizing, the singular Lagrangians, which are topologically pinched torus times a circle, are the fibres $ T_{\gamma(r),\delta_1,\delta_2} $ corresponding to:

\begin{enumerate}
	\item $ 0 \in \gamma(r)  $,	$ \lambda_2 = 0 $, and $ \lambda_1>0 $.
	\item $ 0 \in \gamma(r)  $,	$ \lambda_1 = 0 $, and $ \lambda_2>0 $.
	\item $ 0 \in \gamma(r)  $,	$ \lambda_1 -\lambda_2= 0 $, and $ \lambda_1<0.$
	\item $ \epsilon \in \gamma(r)  $ and	$ \lambda_1+\lambda_2 = 0 .$
\end{enumerate}

Now we will study the convex base diagram produced by this singular Lagrangian fibration. For this purpose, we will follow the same approach as in \cite{symington71four}, in the sense that we will consider codimension one cuts in the base of the Lagrangian fibration, in the complement of which we will have a toric structure. Let us now assume that $ |\epsilon-1|>1 $, so the circles $ \gamma(r) $ pass first through 0 and then through $ \epsilon $, as $ r $ increases. So, if $ r<1 $, $ T_{\gamma(r),\lambda_1,\lambda_2} $ determine a toric structure, and we can take action coordinates $ (\lambda_1, \lambda_2, \lambda_3) $ given by the symplectic flux concerning a reference fibre $ T_{\gamma(r_0),0,0} $, as in the action-angle coordinates given in \cite{duistermaat1980global}. Considering the limit when $ r_0=0 $, we can interpret $ \lambda_3 $ as the symplectic area of a disk with boundary in a cycle of $ T_{\gamma(r),\lambda_1,\lambda_2} $ that collapses as $ r \to 0 $.\\

 We will introduce cuts to represent our fibration by a convex base diagram as in Figure \ref{fig:firstex}. For $ r\geq 1 $, we then consider cuts (i.e., we disregard fibres) for $ \lambda_1 \geq 0, \lambda_2=0 $, or $ \lambda_1=0, \lambda_2\geq 0 $, or $ \lambda_1=\lambda_2\leq 0 $. For $ r\geq|1-\epsilon| $, we take cuts for $ \lambda_1=-\lambda_2 $. This way, we killed monodromies around singular fibres and, hence, possible ambiguity to extend $ \lambda_3 $ via symplectic flux. Note that we can extend $ \lambda_3 $ to a continuous (but not smooth) function on the whole $ Y_{\widetilde{\sigma},\epsilon} $.\\

We call the image $ B $ of $ \pi: Y_{\widetilde{\sigma},\epsilon} \to \mathbb{R}^3 $ given by $ (\lambda_1,\lambda_2,\lambda_3) $ and denote by $ \Sigma $ the locus on $ B $ corresponding to singular Lagrangian fibres, as in Section \ref{affine}.  Let $S_1$ be the image of the singular fibres corresponding to the case when $0\in\gamma(r)$, and $S_2$ be the image of the singular fibres corresponding to the case when $\epsilon \in \gamma(r)$. The cones $C_i$ are defined with $v_i=(0,0,1)$, for $i=1,2$. We obtain Figure \ref{fig:cbdQ5} as the convex base diagram with cuts.

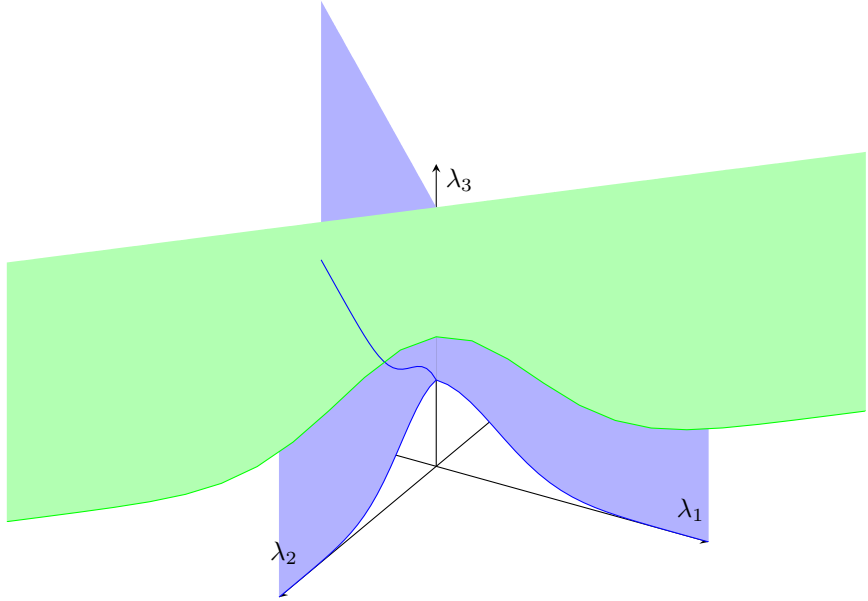
\begin{figure}[h!]
	
	\centering
\begin{tikzpicture}
	\begin{axis}
		[scale=1.2,
		axis x line=center,
		axis y line=center,
		axis z line=middle,
		zmin=0.0,
		zmax=3.5,
		ticks=none,
		enlargelimits=false,
		view={-30}{-45},
		xlabel = \(\lambda_1\),
		ylabel = {\(\lambda_2\)},
		zlabel = {\(\lambda_3\)}
		]
		\addplot3[name path=A,color=blue,domain=0:10,samples y=1] (x,0, {exp(-x^2/10)});
		\addplot3[name path=C,color=blue,domain=0:10,samples y=1] (0,x, {exp(-x^2/10)});
		\addplot3[name path=D,color=blue,domain=0:10,samples y=1] (-x,-x, {exp(-x^2/10)});
		\addplot3[name path=B,variable=t,smooth,draw=none,domain=0:10]  (t,0, 3);
		\addplot3[name path=E,variable=t,smooth,draw=none,domain=0:10]  (0,t, 3);
		\addplot3[name path=F,variable=t,smooth,draw=none,domain=0:10]  (-t,-t, 3);
		\addplot3 [blue!30,fill opacity=0.5] fill between [of=A and B];
		\addplot3 [blue!30,fill opacity=0.5] fill between [of=C and E];
		\addplot3 [blue!30,fill opacity=0.5] fill between [of=D and F];
		
		\addplot3[name path=G,color=green,domain=-10:10,samples y=1] (-x,x, {1.5*exp(-x^2/10)});
		\addplot3[name path=H,variable=t,smooth,draw=none,domain=-10:10]  (t,-t, 3);
		\addplot3 [green!30,fill opacity=0.5] fill between [of=G and H];
		
	\end{axis}

\end{tikzpicture}
\caption{Convex base diagram corresponding to the restricted Lagrangian fibration related with $ Q_5 $}
\label{fig:cbdQ5}
\end{figure}

\begin{rmk}
	One can check that	$ \lambda_3 $ parameter of singular fibres tend to zero as $ |\lambda_1|^2+|\lambda_2|^2 \to \infty $, with $r$ fixed.
\end{rmk}

  We are now going to study the topological monodromy, as defined in Section \ref{affine}, as we go through the cuts of this fibration. Dropping, from now on, the brackets from the notation, let $ \{ \theta_1, \theta_2, \theta_3 \} $ be a basis for $ H_1(T_{\gamma(r),\delta_1,\delta_2}) $ where $\theta_i $ is the circle class of orbit corresponding to $ \lambda_i $ for $ i=1,2 $, and $ \theta_3 $ the collapsing class associated with $ \lambda_3 $. Let
  \[ D_{1,0} = \{ (\lambda_1,\lambda_2,\lambda_3) \in \text{Im}(\pi) | \lambda_1>0,\lambda_2>0 \}, \]   
  \[ D_{1,1} = \{ (\lambda_1,\lambda_2,\lambda_3) \in \text{Im}(\pi) | \lambda_2<0,\lambda_1>\lambda_2\}, \] 
  \[ D_{1,2} = \{ (\lambda_1,\lambda_2,\lambda_3) \in \text{Im}(\pi) | \lambda_1<0,\lambda_2>\lambda_1\}, \] 
  \[ D_{2,0} = \{ (\lambda_1,\lambda_2,\lambda_3) \in \text{Im}(\pi) | \lambda_1+\lambda_2>0 \}, \] 
  \[ D_{2,1} = \{ (\lambda_1,\lambda_2,\lambda_3) \in \text{Im}(\pi) | \lambda_1+\lambda_2<0 \}, \] 
  and define $\mathfrak{b}_{i,j}$ an oriented loop from $D_{i,0}$ to $D_{i,j}$ and back, for $j\neq0$, as in Section \ref{affine}.\\
  
    Taking into account that the topological monodromy leaves the $T^2$ orbit (generated by $\theta_1,\theta_2$) invariant and shears the $\theta_3$ cycle concerning the collapsing cycle, we have that the topological monodromies $ M_{\mathfrak{b}_{i,j}}$ concerning $ \mathfrak{b}_{i,j} $ are:
  
  \[ M_{\mathfrak{b}_{1,1}}=\begin{pmatrix}
  	1 & 0 & 0 \\
  	0 & 1 & 1 \\
  	0 & 0 & 1
  \end{pmatrix}, M_{\mathfrak{b}_{1,2}}=\begin{pmatrix}
  1 & 0 & 1 \\
  0 & 1 & 0 \\
  0 & 0 & 1
\end{pmatrix},   M_{\mathfrak{b}_{2,1}} = \begin{pmatrix}
1 & 0 & 1 \\
0 & 1 & 1 \\
0 & 0 & 1
\end{pmatrix} \]

The action coordinates, given by flux, allow us to locally identify an open simply connected neighborhood of $ b\in B\setminus \Sigma $ with an open set in $ H^1(F_b;\mathbb{R}) $. Hence, $ T_bB \cong H^1(F_b;\mathbb{R}) $ endows a lattice $ H^1(F_b;\mathbb{Z}) $, inducing an affine structure  in $ B\setminus \Sigma  $. Since our Lagrangian fibration $\pi=(\lambda_1,\lambda_2,\lambda_3)$ is induced by flux, the affine structure on $B$ in the complement of the cuts coincide with the standard affine structure in $\mathbb{R}^3$.\\

Because $ H^1(F_b, \mathbb{Z})=\hom(H_1(F_b,\mathbb{Z}),\mathbb{Z}) $ and our choice of basis $\{\theta_1,\theta_2,\theta_3\}$ for $H_1(F_b,\mathbb{Z}) $, associated with the $(\lambda_1,\lambda_2,\lambda_3)$ coordinates of $\mathbb{R}^3$, we see that the affine monodromy (the monodromy for the affine structure in $ B\setminus \Sigma $) about each loop $ \mathfrak{b}_ {i,j} \in \pi_1(B\setminus \Sigma) $ is given by the transpose inverse of the monodromies $ M_{\mathfrak{b}_ {i,j}} $. The corresponding affine monodromies are:
\begin{center}
	\begin{enumerate}
	\item $ M_{\mathfrak{b}_{1,1}}^{\text{af}}= \begin{pmatrix}
		1 & 0 & 0 \\
		0 & 1 & 0 \\
		0 & -1 & 1
	\end{pmatrix} $
with eigenvectors 
$ \begin{pmatrix}
	0 \\
	0 \\
	1
\end{pmatrix} $, $ \begin{pmatrix}
	1 \\
	0 \\
	0
\end{pmatrix} $.
	\item $ M_{\mathfrak{b}_{1,2}}^{\text{af}}= \begin{pmatrix}
		1 & 0 & 0 \\
		0 & 1 & 0 \\
		-1 & 0 & 1
	\end{pmatrix} $
with eigenvectors 
$ \begin{pmatrix}
	0 \\
	0 \\
	1
\end{pmatrix} $, $ \begin{pmatrix}
	0 \\
	1 \\
	0
\end{pmatrix} $.
	\item $ M_{\mathfrak{b}_{2,1}}^{\text{af}}= \begin{pmatrix}
		1 & 0 & 0 \\
		0 & 1 & 0 \\
		-1 & -1 & 1
	\end{pmatrix} $
with eigenvectors 
$ \begin{pmatrix}
	0 \\
	0 \\
	1
\end{pmatrix} $, $ \begin{pmatrix}
	-1 \\
	1 \\
	0
\end{pmatrix} $.
\end{enumerate}
\end{center}

We want to change the direction of all the cuts associated with a connected family of singularities. The result of applying the transferring the cut operations (Definition \ref{tcut}) concerning the two cuts of our fibration $ \pi: Y_{\widetilde{\sigma},\epsilon} \to \mathbb{R}^3 $, is obtained by following the steps:
\begin{enumerate}
	\item Apply the matrix $  M_{\mathfrak{b}_{1,1}}^{\text{af}} $ to $ \overline{D_{1,1} }  $. 
	\item Apply the matrix $  M_{\mathfrak{b}_{1,2}}^{\text{af}} $ to $ \overline{D_{1,2}} $.
	\item Apply the matrix $  M_{\mathfrak{b}_{2,1}}^{\text{af}} $ to the image of $ \overline{D_{2,1}} $ after the first two operations.
\end{enumerate}
The resulting convex base diagram is presented in Figure \ref{fig:cbdQ52}.\\

\begin{figure}[h!]
	\begin{tikzpicture}
		\begin{axis}
			[scale=1.2,
			hide axis,
			zmin=0.0,
			zmax=3.5,
			ticks=none,
			enlargelimits=false,
			view={-15}{-45}
			]
			\addplot3[name path=A,color=blue,domain=0:10,samples y=1] (x,0, {exp(-x^2/10)});
			\addplot3[name path=C,color=blue,domain=0:10,samples y=1] (0,x, {exp(-x^2/10)});
			\addplot3[name path=D,color=blue,domain=0:7,samples y=1] (-x,-x, {exp(-x^2/10)+3*x});
			\addplot3[name path=B,variable=t,domain=0:10]  (t,0, 0) node [pos=0.998] {$ \begin{pmatrix}
					1\\0\\0
				\end{pmatrix} $};
			\addplot3[name path=E,variable=t,domain=0:10]  (0,t, 0) node [pos=0.996] {$ \begin{pmatrix}
					0\\1\\0
				\end{pmatrix} $};
			\addplot3[name path=F,variable=t,domain=0:7]  (-t,-t, 3*t) node [pos=0.003] {$ \begin{pmatrix}
					-1\\-1\\3
				\end{pmatrix} $};
			\addplot3 [blue!30,fill opacity=0.5] fill between [of=A and B];
			\addplot3 [blue!30,fill opacity=0.5] fill between [of=C and E];
			\addplot3 [blue!30,fill opacity=0.5] fill between [of=D and F];
			
			\addplot3[name path=G,color=green,domain=0:10,samples y=1] (-x,x, {1.5*exp(-x^2/10)+x});
			\addplot3[name path=J,color=green,domain=0:10,samples y=1] (x,-x, {1.5*exp(-x^2/10)+x});
			\addplot3[name path=H,variable=t,domain=0:10]  (t,-t, t) node [pos=0.0055] {$ \begin{pmatrix}
					1\\-1\\1
				\end{pmatrix} $};
			\addplot3[name path=I,variable=t,domain=0:10]  (-t,t, t) node [pos=0.0055] {$ \begin{pmatrix}
					-1\\1\\1
				\end{pmatrix} $};
			\addplot3 [green!30,fill opacity=0.5] fill between [of=G and I];
			\addplot3 [green!30,fill opacity=0.5] fill between [of=J and H];
			
		\end{axis}
	\end{tikzpicture}
\caption{Convex base diagram related with Figure \ref{fig:cbdQ5} after applying the transferring the cut operations.}
\label{fig:cbdQ52}
\end{figure}
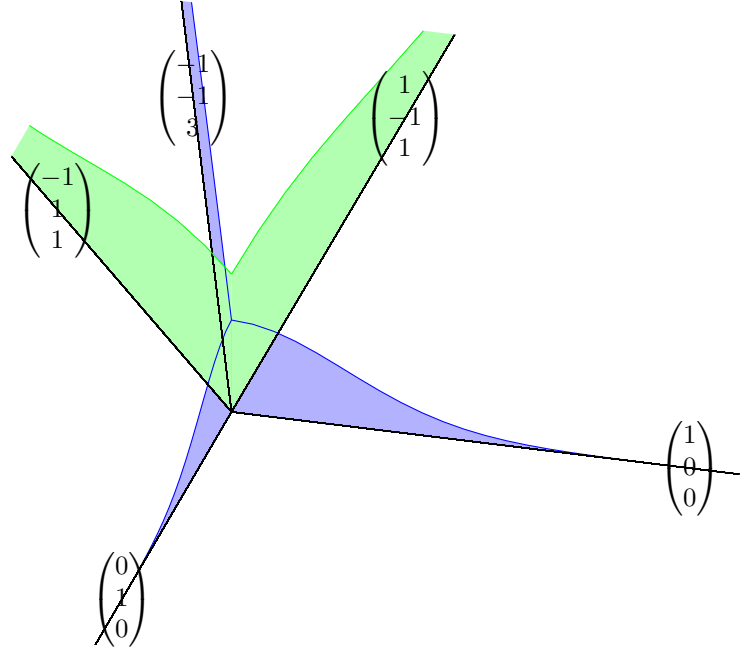

Note that the cone presented in Figure \ref{fig:cbdQ52} coincides with:
\[ \sigma^\vee =\text{Cone}\left\{  \begin{pmatrix}
	-1\\-1\\3
\end{pmatrix},
\begin{pmatrix}
	-1\\0\\2
\end{pmatrix},
\begin{pmatrix}
	-1\\1\\1
\end{pmatrix},
\begin{pmatrix}
	0\\-1\\2
\end{pmatrix},
\begin{pmatrix}
	0\\0\\1
\end{pmatrix},
\begin{pmatrix}
	0\\1\\0
\end{pmatrix},
\begin{pmatrix}
	1\\-1\\1
\end{pmatrix},
\begin{pmatrix}
	1\\0\\0
\end{pmatrix} \right\} \]
After applying the matrix 
\[ \mathscr{M}=\begin{pmatrix}
	1 & 0 & 0 \\
	0 & 1 & 0 \\
	1 & 1 & 1
	
\end{pmatrix} \in SL(3,\mathbb{Z})\]
to $ \sigma^\vee $, we obtain:
\[ \mathscr{M}\sigma^\vee =\text{Cone}\left\{  \begin{pmatrix}
	-1\\-1\\1
\end{pmatrix},
\begin{pmatrix}
	-1\\0\\1
\end{pmatrix},
\begin{pmatrix}
	-1\\1\\1
\end{pmatrix},
\begin{pmatrix}
	0\\-1\\1
\end{pmatrix},
\begin{pmatrix}
	0\\0\\1
\end{pmatrix},
\begin{pmatrix}
	0\\1\\1
\end{pmatrix},
\begin{pmatrix}
	1\\-1\\1
\end{pmatrix},
\begin{pmatrix}
	1\\0\\1
\end{pmatrix} \right\}, \]
where we see it as a cone having $ Q_5^\vee $ (Figure \ref{fig:Q5dual}) at height 1.\\

\begin{figure}[h!]
\centering
	\begin{tikzpicture}
	\draw[fill=gray!25] (-1,-1)  -- (-1,0) -- (-1,1) --(0,1)-- (1,0) -- (1,-1) -- (0,-1) -- cycle;
	\path (0,0) -- (-1,-1) node[pos=1.3]{$ (-1,-1) $};
	\path (0,0) -- (-1,1) node[pos=1.3]{$ (-1,1) $};
	\path (0,0) -- (0,1) node[pos=1.3]{$ (0,1) $};
	\path (0,0) -- (1,0) node[pos=1.5]{$ (1,0) $};
	\path (0,0) -- (1,-1) node[pos=1.3]{$ (1,-1) $};
\end{tikzpicture} 
\caption{$ Q_5^\vee $}
\label{fig:Q5dual}
\end{figure}

Section \ref{thm} aims to construct a Lagrangian fibration on the smoothing of a cone singularity associated with the Minkowski decomposition in a general setting and to prove a duality result relating the convex base diagram to the dual of the original cone.

	\section{Main Results} \label{thm}
	
	We are interested in lattice polytopes $ Q $ that have a particular type of Minkowski decomposition, as we define below.
	
	\begin{dfn} \label{admdecomp}
		We call a Minkowski decomposition $ Q=M_1 + \dots+ M_k \subset \mathbb{R}^n $ \textbf{admissible} if:
		\begin{itemize}
			\item Each $ M_i $ is a lattice polytope that contains the origin as a vertex.
			\item The non-zero vertices $ v_{i,1},\dots,v_{i,m_i} $ of $ M_i $ are linearly independent, and there exist $v_{i,m_i+1},\dots,v_{i,n}\in \mathbb{Z}^n$ such that $v_{i,1},\dots,v_{i,n}$ are a basis for the lattice $\mathbb{Z}^n$ (all the $M_i$ of the Minkowski decomposition are congruent to standard simplices of dimension $m_i$ in $\mathbb{R}^{m_i}\times \mathbb{R}^{n-m_i} $ under the action of $SL_n(\mathbb{Z})$).
		\end{itemize}
		Henceforth, $m_i$ refers to the number of non-zero vectors in the lattice polytope $M_i$, and $V_i$ denotes the $m_i \times n$ matrix with rows representing the non-zero vertices of $M_i$.
	\end{dfn}

\begin{rmk}
	An admissible decomposition of $Q$ implies that Altmann's versal deformation provides a smoothing of $Y_{\sigma}$, as demonstrated in the following Theorem \ref{fibration}.
\end{rmk}

Given a matrix $V_i$ as in Definition \ref{admdecomp}, there is a $(n-m_i) \times n$ matrix $E_i$ such that the rows of the matrix
\[ X_i=\begin{pmatrix}
	V_i\\
	E_i
\end{pmatrix} \]
is a basis of $\mathbb{Z}^n$. The matrix $X_i$ is invertible, and its inverse $Z_i$ has integer entries. Let $A_i$ be a $n\times m_i$ matrix and $C_i$ be a $n \times (n-m_i)$ matrix defined by the formula:
\[Z_i=\begin{pmatrix}
	A_i & C_i
\end{pmatrix}. \]
We obtain the following relations:
\begin{equation} \label{relationsmatrix}
	\begin{pmatrix}
	V_i\\
	E_i
\end{pmatrix} \begin{pmatrix}
	A_i & C_i
\end{pmatrix} = \begin{pmatrix}
	V_i\cdot A_i &V_i \cdot C_i\\
	E_i \cdot A_i & E_i \cdot C_i
\end{pmatrix}= \begin{pmatrix}
	Id&0\\
	0 & Id
\end{pmatrix}.
\end{equation}

\begin{dfn}\label{matrixesVi}
	After fixing a choice for $E_i$, we call the triple of matrices $(A_i, C_i, E_i)$ satisfying \eqref{relationsmatrix}, the matrices associated with $V_i$.
\end{dfn}

\subsection{Auxiliary complex fibration} \label{subacf}
The theorem below provides a complex fibration on a smoothing of $Y_\sigma$. This result was already known (see \cite{lau2014open}), however, in our proof we find explicit global coordinates associated with the terms in the Minkowski decomposition. These coordinates are used to analyze the singular Lagrangian torus fibration in Section \ref{singlagfib}.

	\begin{thm} \label{fibration}
		Let $ Q=M_1 +M_2+ ... +M_k $ be an admissible Minkowski decomposition of $ Q $. Then, there exists a complex fibration on a deformation $Y_{\widetilde{\sigma},\epsilon}$ of $Y_{\sigma}$, the toric variety associated with $\sigma=C(Q)$, over $ \mathbb{C} $ such that it associates to each $ M_i $ a singular fibre that is isomorphic to the subvariety $ x_0\dots x_{m_i}=0 $ inside $ \mathbb{C}^{m_i+1} \times(\mathbb{C}^*)^{n-m_i} $, and the general fibre is $ (\mathbb{C}^*)^n $ where $ n=\dim Q $. In particular, we see that $Y_{\widetilde{\sigma},\epsilon}$ is smooth.
	\end{thm}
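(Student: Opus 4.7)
The plan is to generalize the construction carried out explicitly in Section \ref{examples}, working with the characters associated with the matrices $(A_i, C_i, E_i)$ of Definition \ref{matrixesVi} and the map $\phi$ of Definition \ref{phi}.

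First, following \cite{altmann1997versal}, I set
\[ \widetilde{\sigma} = \text{Cone}\left(\bigcup_{i=1}^k (M_i \cap \mathbb{Z}^n) \times \{e_i\}\right) \subset \mathbb{R}^{n+k}, \]
so that the diagonal embedding $(a,1) \mapsto (a, 1, \ldots, 1)$ realizes $Y_\sigma \hookrightarrow Y_{\widetilde{\sigma}}$ as the subvariety $\{t_1 = \cdots = t_k\}$, where $t_i := \chi^{(0, e_i)}$. The deformation is then obtained via $\Psi : Y_{\widetilde{\sigma}} \to \mathbb{C}^{k-1}$, $\Psi = (t_2 - t_1, \ldots, t_k - t_1)$, and I set $Y_{\widetilde{\sigma}, \epsilon} := \Psi^{-1}(\epsilon)$ for a generic $\epsilon = (\epsilon_2, \ldots, \epsilon_k)$. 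The desired fibration is $f : Y_{\widetilde{\sigma}, \epsilon} \to \mathbb{C}$ given by $f = t_1$.

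Next, for each $i$ I assemble the characters $x_{i,j} = \chi^{\widetilde{a}_{i,j}}$, $y_i = \chi^{\widetilde{b}_i}$, and $w^{\pm}_{i,l} = \chi^{(\widetilde{c}^{\pm})_{i,l}}$ exactly as in the toy example. The set $\mathscr{A}_{\mathscr{H}}$ formed by all these characters together with $t_1, \ldots, t_k$ is shown to generate the semigroup $S_{\widetilde{\sigma}} = \widetilde{\sigma}^\vee \cap \mathbb{Z}^{n+k}$; this is the main combinatorial step, and it uses the admissibility hypothesis crucially, since admissibility is exactly what makes $A_i$ and $C_i$ integer-valued and ensures that the rows of $X_i$ form a $\mathbb{Z}$-basis of $\mathbb{Z}^n$. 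The two central identities, both derived from $V_i A_i = \mathrm{Id}$ and $V_i C_i = 0$ combined with the definition of $\phi$, read
\[ y_i \prod_{j=1}^{m_i} x_{i,j} = \prod_{l=1}^{k} t_l^{\alpha_{i,l}}, \qquad w^+_{i,l}\, w^-_{i,l} = \prod_{s=1}^{k} t_s^{\beta_{i,l,s}}, \]
where $\alpha_{i,i} = 1$ (so $t_i$ appears on the right of the first identity), while $\beta_{i,l,i} = 0$ (since $c_{i,l}$ pairs to zero with every row of $V_i$, so $t_i$ does not appear in the second identity).

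Finally I analyze the fibres. On $Y_{\widetilde{\sigma}, \epsilon}$ the character $t_l$ restricts to $t_1 + \epsilon_l$ (with $\epsilon_1 := 0$), so the only candidates for singular values of $f = t_1$ are the $k$ points $\tau_i := -\epsilon_i$, at which $t_i$ vanishes; for generic $\epsilon$ these are distinct. Away from these values all $t_l$'s are nonzero, and the two families of identities above let me express every character in $\mathscr{A}_{\mathscr{H}}$ as a Laurent monomial in a fixed tuple $(x_{i,1}, \ldots, x_{i,m_i}, w^+_{i,1}, \ldots, w^+_{i, n-m_i})$, identifying the generic fibre with $(\mathbb{C}^*)^n$. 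Over $\tau = \tau_i$ the first identity degenerates to $y_i \prod_j x_{i,j} = 0$ while the second keeps $w^+_{i,l} w^-_{i,l}$ equal to a nonzero constant (since $\beta_{i,l,i}=0$ and the other $t_s$ remain nonzero by genericity of $\epsilon$); parametrizing by $(y_i, x_{i,1}, \ldots, x_{i,m_i}) \in \mathbb{C}^{m_i+1}$ and $(w^+_{i,1}, \ldots, w^+_{i,n-m_i}) \in (\mathbb{C}^*)^{n-m_i}$ identifies the singular fibre with $\{x_0 x_1 \cdots x_{m_i} = 0\} \subset \mathbb{C}^{m_i+1} \times (\mathbb{C}^*)^{n-m_i}$, as claimed. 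The local model of the total space around this fibre is the ambient $\mathbb{C}^{m_i+1} \times (\mathbb{C}^*)^{n-m_i}$, which is smooth, so $Y_{\widetilde{\sigma}, \epsilon}$ is smooth. The main obstacle will be the second step: checking that $\mathscr{A}_{\mathscr{H}}$ really generates $S_{\widetilde{\sigma}}$ and that these two families of monomial identities suffice, modulo the smoothing conditions $t_l = t_1 + \epsilon_l$, to control every relation needed for the coordinate description of the fibres. This is where admissibility is doing essential work, and the bookkeeping is considerably more involved than in the $k=2$ toy example, since distinct $M_i$'s interact through shared coordinates on the base lattice $\mathbb{Z}^n$.
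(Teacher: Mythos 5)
Your plan follows essentially the same path as the paper: the same $\widetilde{\sigma}$, the same characters $x_{i,j}, y_i, w^{\pm}_{i,l}$, the same pair of key monomial identities (with vanishing orders $\alpha_{i,i}=1$ and $\beta_{i,l,i}=0$, which you get right up to a sign/shift convention for $\Psi$), and the same two-step fibre analysis. The non-singular fibre argument is sound as sketched: once all $t_l$ are nonzero, the identities plus the invertibility of $Z_i=(A_i\ C_i)$ let you solve for every character as a Laurent monomial in $(x_{i,1},\dots,x_{i,m_i},w^+_{i,1},\dots,w^+_{i,n-m_i})$, identifying $f^{-1}(t_1)$ with $(\mathbb{C}^*)^n$.

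The singular-fibre step, however, has a genuine gap. You write that ``parametrizing by $(y_i,x_{i,1},\dots,x_{i,m_i})\in\mathbb{C}^{m_i+1}$ and $(w^+_{i,1},\dots,w^+_{i,n-m_i})\in(\mathbb{C}^*)^{n-m_i}$ identifies the singular fibre with $\{x_0\cdots x_{m_i}=0\}$,'' but the Laurent-monomial expressions you derived on the generic fibre are not valid here: $y_i$ or some $x_{i,j}$ may vanish, so negative exponents are not permitted. To get a biregular isomorphism you must show that every character $\chi^{(\widehat z,\phi(\widehat z))}$ can be rewritten with \emph{non-negative} exponents in $y_i,x_{i,1},\dots,x_{i,m_i}$ (Laurent in the $w^+$'s and in the nonvanishing $t_j$, $j\neq i$, is fine). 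The paper does this by regrouping $\widehat z=\xi^+b_i+\sum_l\xi^+_l a_{i,l}+\sum_j\xi_{m_i+j}c_{i,j}$ with $\xi^+=\max\{0,-\xi_1,\dots,-\xi_{m_i}\}$ and $\xi^+_l=\xi_l+\xi^+\ge 0$, and then — and this is the crux — verifying that $\langle\phi(\widehat z),e_i\rangle=\xi^+$. Without that matching identity, the regrouped formula would carry a stray power of $t_i$, which vanishes on $f^{-1}(\tau_i)$ and breaks the expression. This computation is where admissibility (via $0\in M_i$, $V_iA_i=\mathrm{Id}$, $c_{i,l}\in\ker V_i$) does the real work, rather than in the generating lemma for $\mathscr{A}_{\mathscr{H}}$ as you suggest; that lemma already follows from the Altmann-style irreducible-pair description of $\widetilde{\sigma}^\vee$ and does not need the admissibility hypothesis.
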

	\begin{proof}
		Let $ \{e_1,\dots,e_k\} $ denotes the standard basis of $ \mathbb{Z}^k $, and define
		
		\[ \widetilde{\sigma}=\text{Cone} \left(  \bigcup_{i=1}^k((M_i \cap \mathbb{Z}^n) \times \{e_i \}) \right) \subseteq \mathbb{R}^n\times \mathbb{R}^k .\]
		Let $ \mathscr{H} $ be the Hilbert Basis of $ \widetilde{\sigma}^\vee $. To have a set of characters that describes our fibration, we are going to define a new set of generators  $ \mathscr{A}_{\mathscr{H}} $  as follows:
		\begin{enumerate}
			\item The vectors $ (\textbf{0},e_i) $, $ \textbf{0} \in \mathbb{Z}^n $ for $ i=1,\dots,k $ are elements of $ \mathscr{A}_{\mathscr{H}} $.
			\item For each $ (v,w) \in \mathscr{H} $ with $ v\in \mathbb{Z}^n \setminus \textbf{0}  $ and $ w \in \mathbb{Z}^k  $,  $ (v,\phi(v)) $ is an element of $ \mathscr{A}_{\mathscr{H}} $. Recall the definition of $\phi$ from Definition \ref{phi}.
			\item Let $ V_i $ be the matrix associated to $ M_i $, and consider its associated matrices $A_i$ and $C_i$ (Definition \ref{matrixesVi}). Let $ a_{i,j} $ be the columns of $ A_i $, $c_{i,l}$ be the columns of $C_i$, and $ b_i = - \sum_{j=1}^{m_i} a_{i,j}  $. The vectors $ \widetilde{a}_{i,j}=(a_{i,j}, \phi(a_{i,j}) ) $, $ (\widetilde{c}^+)_{i,l}=(c_{i,l},\phi(c_{i,l})) $,  $ (\widetilde{c}^-)_{i,l}=(-c_{i,l},\phi(-c_{i,l})) $, and $ \widetilde{b}_i=(b_i, \phi(b_i)) $ are elements of $  \mathscr{A}_{\mathscr{H}} $.
		\end{enumerate} 
		\[V_i=\begin{pmatrix}
				\horzbar & v_{i,1} & \horzbar \\
				 	& \vdots &  \\
				\horzbar & v_{i,m_i} & \horzbar
			\end{pmatrix}, \quad A_i= \begin{pmatrix}
			\vertbar & & \vertbar\\
			a_{i,1} & \dots & a_{i,m_i} \\ 
			\vertbar & & \vertbar
		\end{pmatrix}, 
			\]
			\[C_i= \begin{pmatrix}
			\vertbar & & \vertbar\\
			c_{i,1} & \dots & c_{i,n-m_i} \\ 
			\vertbar & & \vertbar
		\end{pmatrix}.\]
	\begin{lem}
		The set $\mathscr{A}_{\mathscr{H}} $ generates $\widetilde{\sigma}^\vee \cap M $, i.e, $ \mathbb{N}  \mathscr{A}_{\mathscr{H}}  = \widetilde{\sigma}^\vee \cap M $.
	\end{lem}
\begin{proof}
	Our strategy is to prove that $ \mathscr{A}_{\mathscr{H}} \subseteq \mathbb{N} \mathscr{H} $, and $ \mathscr{H} \subseteq  \mathbb{N} \mathscr{A}_{\mathscr{H}} $. Then, we conclude that $ \mathbb{N}  \mathscr{A}_{\mathscr{H}}  = \mathbb{N} \mathscr{H} = \widetilde{\sigma}^\vee \cap M  $.\\
	First, we are going to prove the inclusion $ \mathscr{A}_{\mathscr{H}} \subseteq \mathbb{N} \mathscr{H}= \widetilde{\sigma}^\vee \cap M$. By Definition \ref{dual}, it is equivalent to prove that $ \langle a , m \rangle \geq 0, \forall a\in \mathscr{A}_{\mathscr{H}},\forall m \in \widetilde{\sigma} $. Recall that in our construction the elements of $ \mathscr{A}_{\mathscr{H}} $ are of the form $ (\textbf{0},e_i)  $, where $ \textbf{0} \in \mathbb{Z}^n $ and $ \{e_1,\dots,e_k\} $ is the standard basis of $ \mathbb{Z}^k $,  or of the form $ (v,\phi(v)) $ where $ v \in \mathbb{Z}^n \setminus \textbf{0} $. It is clear that $ \langle (\textbf{0},e_i) , m \rangle \geq 0, \forall m\in\widetilde{\sigma}   $. For $ (v,\phi(v)) $ with $ v \neq 0 $ and $ (q_i,e_i) \in (M_i \cap \mathbb{Z}^n) \times \{e_i \}  $, we have: 
		\begin{align*}
		\left\langle (v,\phi(v)),(q_i,e_i) \right\rangle & =\langle v , q_i \rangle + \langle \phi(v) , e_i \rangle\\
		& = \langle v , q_i \rangle + \max \{ \langle v , - M_i \rangle \}\\
		& \geq \langle v , q_i \rangle + \langle v , - q_i \rangle \\
		&=0
	\end{align*}
	We conclude that $ \mathscr{A}_{\mathscr{H}} \subseteq \mathbb{N} \mathscr{H}= \widetilde{\sigma}^\vee \cap M$.\\
	Now, we are going to prove that $ \mathscr{H} \subseteq  \mathbb{N} \mathscr{A}_{\mathscr{H}} $. If $ (v,w) \in \mathscr{H}  $ with $ v \in \mathbb{Z}^n $ and $ w \in \mathbb{Z}^k $, then 
	\[ \left\langle (v,w),(q_i,e_i) \right\rangle =\langle v , q_i \rangle + \langle w , e_i \rangle \geq 0 \]
	for all $ (q_i,e_i) \in  (M_i \cap \mathbb{Z}^n) \times \{e_i \} $. This implies that $ \langle w , e_i \rangle \geq \langle v , - q_i \rangle, \forall q_i \in M_i  $, therefore $ \langle w , e_i \rangle \geq \max\{ \langle v , - M_i \rangle\}= \langle \phi(v) , e_i \rangle $. Therefore, 
 \[ (v,w) = (v,\phi(v)) + \sum_{i=1}^{k} (\langle w , e_i \rangle - \langle \phi(v) , e_i \rangle) (\textbf{0},e_i).  \]
 We conclude that $ (v,w) \in \mathbb{N} \mathscr{A}_{\mathscr{H}}  $, since $ (\textbf{0},e_i), (v,\phi(v)) \in  \mathscr{A}_{\mathscr{H}}  $ and $ \langle w , e_i \rangle - \langle \phi(v) , e_i \rangle \geq 0 $.
\end{proof}

Let us now describe the deformation $Y_{\widetilde{\sigma},\epsilon}$ of $Y_{\sigma}$ associated with the characters of $\mathscr{A}_{\mathscr{H}} $, and a complex fibration on $Y_{\widetilde{\sigma},\epsilon}$. Define	

	 \[ Y_{\widetilde{\sigma}}:=Y_{\mathscr{A}_{\mathscr{H}}}=\text{Spec} (\mathbb{C}([\widetilde{\sigma}^\vee \cap M]).\] 
Let $ t_i = \chi^{(\textbf{0},e_i)}$  and $ \Psi:Y_{\widetilde{\sigma}} \to \mathbb{C}^{k-1} $ given by $ (t_1-t_2,\dots,t_1-t_k) $ similar to \cite[\textsection 3]{gross2001examples} . From \cite{altmann1997versal}, $ Y_{\sigma} = \Psi^{-1}(\textbf{0}) $.  Let $ \epsilon_1=0 $, $ \epsilon:=(\epsilon_2,\dots,\epsilon_{k}) $, where the $ \epsilon_i $ are pairwise distinct and distinct from $ 1 $. Consider $ Y_{\widetilde{\sigma},\epsilon}:=\Psi^{-1}(\epsilon) $ and the complex fibration $ f: Y_{\widetilde{\sigma},\epsilon} \to \mathbb{C} $ given by the projection to $ t_1 $.\\

We note that the singular fibres of $ f $ lie over $ t_1=\epsilon_i $ for $ i=1,\dots,k $. Indeed, we have the following lemma.
\begin{lem} \label{nonsingularfibre}
	If $ t_1\neq \epsilon_i $, we have that $ f^{-1}(t_1) \cong (\mathbb{C}^*)^n $.
\end{lem} 
\begin{proof}
	The main idea is to find $ n $ non-zero characters of $ Y_{\widetilde{\sigma},\epsilon} $ in $ f^{-1}(t_1) $ such that they generate all the other characters. For instance, we are going to prove that for all $ p=1,\dots,k $ the characters $ x_{p,l}:= \chi^{\widetilde{a}_{p,l}} $, $ y_p:= \chi^{\widetilde{b}_p}$,$w^+_{p,j}:=\chi^{(\widetilde{c}^+)_{p,j}} $, and $ w^-_{p,j}:=\chi^{(\widetilde{c}^-)_{p,j}} $ generate all the other characters. Also, these characters satisfy the relations
	
	\begin{equation} \label{eqxy}
		y_p\prod_{l=1}^{m_p}  x_{p,l} = t_p \prod_{\substack{j=1\\
				j\neq p}}^{k} t_j^{\beta_j(M_p)},
	\end{equation}
	for some $ \beta_j(M_p) \in \mathbb{Z} $.
	
		\begin{equation} \label{eqw}
		w^+_{p,j} w^-_{p,j} = \prod_{\substack{l=1\\
				l\neq p}}^{k} t_l^{\eta_{j,l}(M_p)}, 
	\end{equation}
	for all $ j=1,\dots, n-m_p $, and for some $ \eta_{j,l}(M_p) \in \mathbb{Z} $.\\
	
	 Since in $\Psi^{-1}(\epsilon)$, $ t_j\neq 0, \forall j=1,\dots,k $, we get that $ y_p\prod_{l=1}^{m_p}  x_{p,l} \neq 0  $, and  $ w^+_{p,j} w^-_{p,j} \neq 0 $. So, we can choose $ x_{p,1},\dots,x_{p,m_p},w^+_{p,1},\dots,w^+_{p,n-m_p} $ as our $ n $ non-zero characters that generate all the others.\\
	 
	 First, we are going to prove that equations (\ref{eqxy}) and (\ref{eqw}) hold. They will also be useful later, in particular the fact that the exponent of $t_p$ is $1$ in (\ref{eqxy}), and $\eta_{j,p}(M_p)=0$ in (\ref{eqw}).  Recall that we are using the multiplication induced by the semigroup structure of $ \widetilde{\sigma}^\vee \cap \mathbb{Z}^{n+k} $. Therefore
	 \[ b_p+\sum_{j=1}^{m_p} a_{p,j}=0, \]
	 implies
	 \[ 	y_p\prod_{l=1}^{m_p}  x_{p,l} =  \prod_{j=1}^{k} t_j^{\beta_j(M_p)}, \]
	 for some $ \beta_j(M_p)\in \mathbb{Z} $. In order to prove that $ \beta_p(M_p)=1 $, we are going to check that 
	 \[ \langle \phi(b_p),e_p \rangle + \sum_{j=1}^{m_p} \langle \phi(a_{p,l}),e_p \rangle=1. \]
	 
	 By definition $ \langle \phi(a_{p,l}),e_p \rangle=\max\{\langle a_{p,l},-M_p \rangle\} $. Recall the relations in \eqref{relationsmatrix}. Since $ V_p\cdot A_p=Id $, we have that $ \langle a_{p,l},-v_{p,j} \rangle = 0 $ for $ l\neq j $, and $ \langle a_{p,l},-v_{p,l} \rangle = -1  $. We conclude that $ \langle \phi(a_{p,l}),e_p \rangle=0 $. On the other hand, $ \langle \phi(b_p),e_p \rangle= \max\{\langle b_p,-M_p \rangle\}$  and it follows from $ V_p \cdot A_p=Id $ that $ \langle b_p,-v_{p,j} \rangle=1 $ for all $ j $. Hence $ \langle \phi(b_p),e_p \rangle=1 $. Therefore, we get $ \beta_p(M_p)=1 $ and we obtain Equation (\ref{eqxy}). \\
	 
	 As before, the first $n$ coordinates of $\widetilde{c}^+_{p,l}-\widetilde{c}^-_{p,l}$ are equal to $ c_{p,l} - c_{p,l}=\bold{0} $, which implies that
	 \[ 	w^+_{p,j} w^-_{p,j} = \prod_{l=1}^{k} t_l^{\eta_{j,l}(M_p)}  \]
	 for some $ \eta_{j,l}(M_p) \in \mathbb{Z} $. The fact that $ \eta_{j,p}(M_p)=0 $ follows from $ \langle \phi(c_{p,l}),e_p \rangle=\langle \phi(-c_{p,l}),e_p \rangle=0 $ since $ c_{p,l} \in \ker V_p \text{,   } \forall l=1,\dots,n-m_p$ ($V_p \cdot C_p =0$, see \eqref{relationsmatrix}). This concludes the proof of the equations (\ref{eqxy}) and (\ref{eqw}).\\
	 
	 Now we are going to prove that all the characters of $ Y_{\widetilde{\sigma},\epsilon} $ in $ f^{-1}(t_1) $ depend only on 
	 \[ x_{p,1},\dots, x_{p,m_p}, w^+_{p,1},\dots,w^+_{p,n-m_p}. \]

	 By our construction, the elements of $ \mathscr{A}_{\mathscr{H}} $, which are different from $ (\textbf{0},e_i) $, are of the form $ ( \widehat{z},\phi(\widehat{z}) ) $ with $ \widehat{z} \in \mathbb{Z}^n \setminus \textbf{0} $. There exists a unique way to write $ \widehat{z} $ as a sum of the vectors $ a_{p,1},\dots,a_{p,m_p},c_{p,1},\dots, c_{p,n-m_p} $ over $ \mathbb{Z} $, because $(A_p \quad C_p)$ is invertible, see \eqref{relationsmatrix}. Hence, for all $\widehat{z}\in \mathbb{Z}^n \setminus \textbf{0}  $,
	 \begin{equation} \label{widehatz}
\widehat{z} = \sum_{l=1}^{m_p} \xi_l a_{p,l} + \sum_{j=1}^{n-m_p} \xi_{m_p+j} c_{p,j} 
	 \end{equation}
	 with $ \xi_l \in \mathbb{Z} $ for $ l=1,\dots,n $. Letting $ z $ be the character corresponding to $ (\widehat{z},\phi(\widehat{z})) $ we obtain:
	  \begin{equation}\label{znonsingular}
	 	z=\prod_{i=1}^{m_p} (x_{p,i})^{\xi_i}\prod_{l=1}^{n-m_p}(w^+_{p,l})^{\xi_{m_p+l}} \prod_{j=1}^{k} t_j^{\eta_{p,j}(z)}
	 \end{equation}
 for some $ \eta_{p,j}(z) \in \mathbb{Z}. $ Note that all the $ \xi_i $ and all the $ \eta_{p,j}(z) $ are allowed to be negative since by Equation (\ref{eqxy}) and (\ref{eqw}), we have $ x_{p,l} $ and $ w^+_{p,l} $ non-zero. Equation (\ref{znonsingular}) proves the dependency that we want.
\end{proof}

We conclude the proof of the proposition by analyzing the singular fibres.

\begin{lem} \label{singularfiblem}
	$ f^{-1}(\epsilon_p) $ is isomorphic to the subvariety $ x_0x_{p,1}\dots x_{p,m_p}=0 $ inside $ \mathbb{C}^{m_p+1} \times(\mathbb{C}^*)^{n-m_p} $.
\end{lem}
	 
	 \begin{proof}
	 	 In the same way as in the proof of Lemma \ref{nonsingularfibre}, we are going to prove that all the characters of $ Y_{\widetilde{\sigma},\epsilon}, $ in $ f^{-1}(\epsilon_p) $ depend only in
	 	 \[  x_0=y_p,x_{p,1},\dots, x_{p,m_p}, w^+_{p,1},\dots,w^+_{p,n-m_p}. \]

	 	Note that Equations (\ref{eqxy}) and (\ref{eqw}) hold in general, and that in $ f^{-1}(\epsilon_{p}) $ we have $ t_{p}=0 $. In this setting, we obtain the following relations:
	 	\[ y_p\prod_{l=1}^{m_p}  x_{p,l} = 0, \]
	 	\[ w^+_{p,j} w^-_{p,j} = \prod_{\substack{l=1\\
	 			l\neq p}}^{k} t_l^{\eta_{j,l}(M_p)} \neq 0,  \]
	 	for all $ j=1,\dots, n-m_p $ and some $ \eta_{j,l}(M_p) \in \mathbb{Z} $.\\
	 	
	 	Recall that the elements of $ \mathscr{A}_{\mathscr{H}} $, different from $ (\textbf{0},e_i) $, are of the form $(\widehat{z},\phi(\widehat{z}) ) $ with $ \widehat{z} \in \mathbb{Z}^n \setminus \textbf{0} $. In this context, equation (\ref{widehatz}) holds, but we will need to modify it since now $x_{p,l}$ can be zero, so we need non-negative coefficients. For that, we will make use of the character $y_p$.\\
	 	
Recalling that $b_p=-\sum a_{p,l}$, define $ \xi^+= \max \{ 0, -\xi_1,\dots,-\xi_{m_p} \} \geq 0 $ and $ \xi^+_l = \xi_l +   \xi^+ \geq 0 $ for $ l=1,\dots,m_p $. We have: 
	 	\begin{equation} \label{zsingular}
	 		\widehat{z} = \xi^+b_p+\sum_{l=1}^{m_p} \xi^+_l a_{p,l} + \sum_{j=1}^{n-m_p} \xi_{m_p+j} c_{p,j}.
	 	\end{equation}
	 	
Now, since $t_p=0$, we will not be able to correct the $p$-th coordinate of $\phi(\widehat{z})$ as we write the character $z$ associated with $(\widehat{z},\phi(\widehat{z}))$ in terms of $y_p,x_{p,l},w^+_{p,j},t_k$ $k\neq p$. It will suffice to show that:
\begin{equation} \label{xiplus}
	\langle \phi(\widehat{z}),e_p \rangle =\xi ^+
\end{equation}
because from $0\in M_p$, $V_p\cdot A_p=\text{Id}$ and $c_{p,j}\in \ker V_p$ we get that $\langle \phi(a_{p,l}),e_p \rangle = \langle \phi(c_{p,j}),e_p \rangle=0$ and $\langle \phi(b_p),e_p \rangle=1$. So equation (\ref{xiplus}) together with equation (\ref{zsingular}) implies:
\begin{equation} \label{zformula}
	z=y_p^{\xi^+}\prod_{i=1}^{m_p} (x_{p,i})^{\xi_i^+}\prod_{l=1}^{n-m_p}(w^+_{p,l})^{\xi_{m_p+l}} \prod_{\substack{j=1\\
	 			j\neq p}}^{k} t_j^{\eta_{p,j}(z)}
\end{equation}
for some $ \eta_{p,j}(z) \in \mathbb{Z} $, which concludes the proof of Lemma \ref{singularfiblem}.\\
	 	 
	 	 Note that by definition, $ \xi^+, \xi^+_1,\dots, \xi^+_{m_p}  $ are non-negative, and at least one is zero. If $ \xi^+=0 $, then:
	 	 \begin{align*}
	 	 	\left\langle \phi(\widehat{z}),e_p \right\rangle & = \left\langle \phi \left( \sum_{l=1}^{m_p} \xi^+_l a_{p,l} + \sum_{j=1}^{n-m_p} \xi_{m_p+j} c_{p,j} \right), e_p\right\rangle\\
	 	 	& = \max \left\{  \left\langle \sum_{l=1}^{m_p} \xi^+_l a_{p,l} + \sum_{j=1}^{n-m_p} \xi_{m_p+j} c_{p,j}, -M_p\right\rangle \right\} \\
	 	 	& = \max \left\{  \left\langle \sum_{l=1}^{m_p} \xi^+_l a_{p,l}, -M_p\right\rangle \right\} \\
	 	 	& \leq  \sum_{l=1}^{m_p} \xi^+_l \max \left\{  \left\langle  a_{p,l} , -M_p\right\rangle \right\} \\
	 	 	& = 0
	 	 \end{align*}
	 	 since $ \textbf{0} \in M_p $ and $V_p \cdot A_p=\text{Id}$. Therefore, $\langle\phi(\widehat{z}),e_p \rangle =0$.
	 	 
	 	 If $ \xi^+ \neq 0 $, then there exists $ j \in \{ 1,\dots, m_p \}$ such that $ \xi^+_j=0 $, then 
	 	 \begin{align*}
	 	 	\left\langle \phi(\widehat{z}),e_p \right\rangle & = \left\langle \phi \left( \xi^+b_p+\sum_{l=1}^{m_p} \xi^+_l a_{p,l} + \sum_{j=1}^{n-m_p} \xi_{m_p+j} c_{p,j} \right), e_p\right\rangle\\
	 	 	& = \max \left\{  \left\langle \xi^+b_p+\sum_{l=1}^{m_p} \xi^+_l a_{p,l} + \sum_{j=1}^{n-m_p} \xi_{m_p+j} c_{p,j} , -M_p\right\rangle \right\} \\
	 	 	& = \max \left\{  \left\langle \xi^+b_p+\sum_{l=1}^{m_p} \xi^+_l a_{p,l}, -M_p\right\rangle \right\} \\
	 	 	& \leq \xi^+ \max  \left\{  \left\langle  b_p , -M_p\right\rangle \right\}  + \sum_{l=1}^{m_p} \xi^+_l \max \left\{  \left\langle  a_{p,l} , -M_p\right\rangle \right\}   \\
	 	 	& = \xi^+.
	 	 \end{align*}
	 	 
	 	 Since $ v_{p,j} \in M_p $ and $\xi_j^+=0$, we get: 
	 	 \[ \langle \phi(\widehat{z}),e_p\rangle \geq \langle \xi^+b_p + \sum \xi^+_l a_{p,l},-v_{p,j}  \rangle = \xi^+, \]
	 	 recalling again that $V_p \cdot A_p=\text{Id}$. Therefore,  $\langle \phi(\widehat{z}),e_p\rangle =\xi^+$.
	 \end{proof} 
We see now that Lemma \ref{nonsingularfibre} and Lemma \ref{singularfiblem} imply Theorem \ref{fibration}.

	 \end{proof}
	 
\subsection{The singular Lagrangian fibration} \label{singlagfib}

As in Section \ref{examples}, we will construct a Lagrangian fibration associated with the complex fibration $f$. First, we endow $ Y_{\widetilde{\sigma},\epsilon} $ with the symplectic form coming from the embedding as a subset of $ \mathbb{C}^{|\mathscr{A}_{\mathscr{H}}|} $. Next, we are going to study the action of $T^n$ on each fibre of $f$.\\

Recall that in $Y_{\widetilde{\sigma}}=Y_{\mathscr{A} _{\mathscr{H}}}$ we have an algebraic torus given by the image of $\Phi_{\mathscr{A} _{\mathscr{H}}} $ (see Definition \ref{toricfromA}). We are interested in the induced action of the circles given by 
\begin{equation} \label{circles}
\theta_i:=\Phi_{\mathscr{A} _{\mathscr{H}}} ( \{1\} \times \dots \times \{1\}\times S^1 \times \{1\} \dots \times \{1\}),
\end{equation}
where $ S^1\subseteq \mathbb{C}^* $ is in the position $ i $ for $i=1,\dots,n$. Let $\mu_{\theta_i}$ be the moment map of the action of $\theta_i$ and define:
\[ \lambda_i:=2\mu_{\theta_i}= \sum_{q\in \mathscr{A}_{\mathscr{H}}} q_i |\chi^q|^2 \]
where $ q_i $ is the $ i $-th coordinate of $ q $. We define $ \lambda=(\lambda_1,\dots,\lambda_n) $, by our construction, we have that

\begin{equation} \label{lambdazeta}
	\lambda=\sum_{q\in \mathscr{A}_{\mathscr{H}}} \begin{pmatrix}
	q_1\\
	\vdots\\
	q_n
\end{pmatrix} |\chi^q|^2 = \sum_{(\widehat{z},\phi(\widehat{z})) \in \mathscr{A}_{\mathscr{H}} } \widehat{z} \cdot |\chi^{(\widehat{z},\phi(\widehat{z}))}|^2.
\end{equation}

We will consider Lagrangian tori, which are contained in $ f^{-1}(\gamma(r)) $ for the circle $ \gamma (r) \subset \mathbb{C} $ with center in $ 1 $ and radius $ r $, and consist of a single $ (S^1)^n$-orbit inside each fibre of $ \gamma(r) $. Our Lagrangian torus will be given by symplectic parallel transport of the $T^n$ orbits along $\gamma(r)$.

\begin{dfn}
	\label{sing}
	Given the circle $ \gamma (r) \subset \mathbb{C} $ and $ \delta=(\delta_1,\dots,\delta_n) \in \mathbb{R}^n $, we define
	\[ T_{\gamma(r),\delta} := f^{-1}(\gamma(r))\cap \lambda^{-1}(\delta). \]
\end{dfn}

We showed above that the general fibre of $f$ is isomorphic to $(\mathbb{C}^*)^n$, therefore $ T_{\gamma(r),\delta} $ is an embedded Lagrangian torus in $Y_{\widetilde{\sigma}}$, except possibly when $\epsilon_i \in \gamma(r)$, or in the limit when $r=0$, and we have an isotropic $T^n$.\\

We assume that there are not two $\epsilon_i$ in the same circle $\gamma(r)$. Let us understand the singular Lagrangian for when $\epsilon_p \in \gamma(r)$. Recall that we identify $f^{-1}(\epsilon_p)$ as a subset of $\mathbb{C}^{m_p +1} \times (\mathbb{C}^*)^{n- m_p}$ given by the characters

\begin{equation} \label{char}
	y_p,x_{p,1},\dots,x_{p,m_p},w^+_1,\dots, w^+_{n-m_p}
\end{equation}

and the equation

\[ y_p\prod_{l=1}^{m_p}  x_{p,l} = 0.\]

  Note that if $n$ of the characters in (\ref{char}) are non-zero, we have a $T^n$-orbit in the fibre, and hence the Lagrangian passing through it is a smooth $T^{n+1}$. When $k \geq 2$ coordinates are zero, we get a singular Lagrangian $T^{n+1}/ \mathfrak{T}_{k-1}$, where $\mathfrak{T}_{k-1}$ is a $k-1$ cycle inside $T^n \subseteq T^{n+1}$. This gives rise to a codimension $k$ strata of the singular locus $\Sigma$ of the Lagrangian fibration.\\
\subsubsection{Singular fibres and collapsing cycles}
We will analyze the monodromies around the codimension $k=2$ strata of $\Sigma$, i.e., the cases where exactly $n-1$ of the characters in (\ref{char}) are non-zero, identifying the cycle in $T^n$ that collapses as we approach that point. We then get $\frac{m_p(m_p+1)}{2}$ collapsing 1-cycles associated with $M_p$.
  
  \begin{lem} \label{collapsingclasses}
  	 In $f^{-1}(\epsilon_p)$,
  	\begin{enumerate}
  		\item $x_{p,i}=y_p=0$ if and only if $\langle v_{p,i},\lambda \rangle=0$ and $\langle v_{p,l},\lambda \rangle \geq 0, \forall l\neq i $. 
  		
  		\item $x_{p,i}=x_{p,j}=0$ with $i\neq j$ if and only if $\langle v_{p,i}-v_{p,j}, \lambda  \rangle=0$, $\langle v_{p,i}, \lambda  \rangle\leq 0$, and  $\langle v_{p,i} - v_{p,l}, \lambda  \rangle \leq 0,\forall l\neq i,j$.
  	\end{enumerate}
  	As a consequence, the collapsing cycles related with $M_p$ are $v_{p,i}\cdot(\theta_1,\dots,\theta_n) $ for $i=1,\dots, m_p$ and $(v_{p,i}-v_{p,j})\cdot(\theta_1,\dots,\theta_n) $ for $i\neq j$.
  \end{lem}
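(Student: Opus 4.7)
The plan is to use the explicit formula \eqref{lambdazeta} together with the expression \eqref{zformula} for characters on $f^{-1}(\epsilon_p)$ in the effective coordinates $y_p, x_{p,1}, \dots, x_{p,m_p}, w^+_{p,1}, \dots, w^+_{p,n-m_p}$, and to expand $\langle v_{p,s}, \lambda \rangle$ as a signed sum over generators $(\widehat{z}, \phi(\widehat{z})) \in \mathscr{A}_{\mathscr{H}}$. Writing each $\widehat{z}$ as in \eqref{widehatz}, the relations \eqref{relationsmatrix} give $v_{p,s} \cdot \widehat{z} = \xi_s$, so
\[
\langle v_{p,s}, \lambda \rangle \;=\; \sum_{(\widehat{z}, \phi(\widehat{z})) \in \mathscr{A}_{\mathscr{H}}} \xi_s(\widehat{z})\, \bigl|\chi^{(\widehat{z}, \phi(\widehat{z}))}\bigr|^2.
\]
The central observation is a sign dichotomy read off from \eqref{zformula}: a term with $\xi_s(\widehat{z}) > 0$ carries the factor $|x_{p,s}|^{2\xi_s^+}$, hence vanishes when $x_{p,s} = 0$, while a term with $\xi_s(\widehat{z}) < 0$ carries the factor $|y_p|^{2\xi^+}$, hence vanishes when $y_p = 0$. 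Moreover $a_{p,s}$ and $b_p$ themselves lie in $\mathscr{A}_{\mathscr{H}}$ and contribute the positive anchor $+|x_{p,s}|^2$ and the negative anchor $-|y_p|^2$ respectively. This yields three basic estimates: (a) $y_p = 0,\; x_{p,s} \neq 0 \Rightarrow \langle v_{p,s}, \lambda \rangle \geq |x_{p,s}|^2 > 0$; (b) $y_p \neq 0,\; x_{p,s} = 0 \Rightarrow \langle v_{p,s}, \lambda \rangle \leq -|y_p|^2 < 0$; (c) $y_p = x_{p,s} = 0 \Rightarrow \langle v_{p,s}, \lambda \rangle = 0$ and all surviving $\widehat{z}$ in the sum satisfy $\xi_l \geq 0$ for every $l$.

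Part (1) then follows quickly. The forward direction is (c), together with the observation that at $y_p = x_{p,i} = 0$ the surviving $\widehat{z}$ have all $\xi_l \geq 0$, giving $\langle v_{p,l}, \lambda \rangle \geq 0$. For the backward direction, the singular-fibre equation $y_p\, x_{p,1} \cdots x_{p,m_p} = 0$ forces some coordinate to vanish: cases (a) and (b) rule out $\{y_p = 0, x_{p,i} \neq 0\}$ and $\{y_p \neq 0, x_{p,i} = 0\}$, and if $y_p \neq 0$ with $x_{p,i} \neq 0$ then some $x_{p,l} = 0$ with $l \neq i$, so (b) applied to $l$ gives $\langle v_{p,l}, \lambda \rangle < 0$, contradicting the hypothesis. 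Part (2) runs the same scheme applied to $\langle v_{p,i} - v_{p,j}, \lambda \rangle = \sum (\xi_i - \xi_j)|\chi|^2$: at $x_{p,i} = x_{p,j} = 0$ one has $\xi_i^+ = \xi_j^+ = 0$, which forces $\xi_i = \xi_j = -\xi^+ \leq 0$ and $\xi_l \geq \xi_i$ for $l \neq i, j$ on every surviving term, yielding all three claimed relations. Conversely, assuming the hypotheses but $x_{p,i} \neq 0$, the anchor $a_{p,i}$ contributes the strictly positive quantity $+|x_{p,i}|^2$ to both $\langle v_{p,i}, \lambda \rangle$ (when $y_p = 0$, via (a)) and to $\langle v_{p,i} - v_{p,l}, \lambda \rangle$ for whichever $l \in \{j\} \cup \{\text{other vanishing } x\text{-coordinates}\}$ exists; this cannot be cancelled because all other surviving terms in these sums are non-negative by the dichotomy. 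The symmetric role of $(i,j)$ — obtained by combining $\langle v_{p,i} - v_{p,j}, \lambda \rangle = 0$ with $\langle v_{p,i}, \lambda \rangle \leq 0$ and $\langle v_{p,i} - v_{p,l}, \lambda \rangle \leq 0$ to recover the analogous inequalities with $i$ and $j$ swapped — then gives $x_{p,j} = 0$ as well.

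The consequence about collapsing cycles is immediate from the weights of the relevant circle actions on $f^{-1}(\epsilon_p)$: the circle $v_{p,i} \cdot (\theta_1, \dots, \theta_n)$ acts with weight $+1$ on $x_{p,i}$, weight $-1$ on $y_p$, and trivially on all other effective coordinates (since $v_{p,i} \cdot a_{p,l} = \delta_{il}$ and $v_{p,i} \cdot c_{p,j} = 0$), so its orbit collapses precisely at $y_p = x_{p,i} = 0$; and $(v_{p,i} - v_{p,j}) \cdot (\theta_1, \dots, \theta_n)$ acts with weights $+1$ and $-1$ on $x_{p,i}$ and $x_{p,j}$ and trivially elsewhere, collapsing at $x_{p,i} = x_{p,j} = 0$. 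I expect the main obstacle to be the mixed case on the singular fibre where both $y_p$ and $x_{p,i}$ are nonzero: here the sign dichotomy alone leaves $\langle v_{p,i}, \lambda \rangle$ genuinely ambiguous, and closing the argument requires pivoting to an adjacent coordinate forced to vanish by $y_p\, x_{p,1} \cdots x_{p,m_p} = 0$, at which the negative anchor $b_p$ (or, for part (2), the positive anchor $a_{p,i}$ in a neighboring difference pairing) produces the strict inequality that contradicts the hypothesis.
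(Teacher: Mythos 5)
Your proof is correct and follows essentially the same route as the paper: both expand $\langle v_{p,s}, \lambda \rangle$ as the signed sum $\sum_{\widehat z} \xi_s(\widehat z)\,|\chi^{(\widehat z,\phi(\widehat z))}|^2$, read off the sign structure from the $\xi^+/\xi_l^+$ decomposition in \eqref{zformula}, and anchor the key strict inequalities on the terms coming from $a_{p,s}$ (contributing $+|x_{p,s}|^2$) and $b_p$ (contributing $-|y_p|^2$). Your packaging into the unified ``sign dichotomy'' with estimates (a)--(c) is a somewhat cleaner organization of the paper's case analysis on which of $y_p, x_{p,1},\dots,x_{p,m_p}$ vanishes, and your explicit weight computation for the collapsing cycles supplies detail the paper leaves implicit.
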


\begin{proof}
	Recall that the elements of $ \mathscr{A}_{\mathscr{H}} $, different from $ (\textbf{0},e_i) $, are of the form $ ( \widehat{z},\phi(\widehat{z}) ) $ and that we proved equation (\ref{zsingular}). In this setting, we have: 
	\[ \chi^{(\widehat{z},\phi(\widehat{z}))} =y_p^{\xi^+(\widehat{z})}\prod_{l=1}^{m_p} (x_{p,l})^{\xi_l^+(\widehat{z})}\prod_{l=1}^{n-m_p}(w^+_{p,l})^{\xi_{m_p+l}(\widehat{z})} \prod_{\substack{l=1\\
			j\neq p}}^{k} t_l^{\eta_{p,l}(z)},  \]
			and also from \eqref{lambdazeta}
			
			\[\lambda=\sum_{(\widehat{z},\phi(\widehat{z})) \in \mathscr{A}_{\mathscr{H}} } \widehat{z} \cdot |\chi^{(\widehat{z},\phi(\widehat{z}))}|^2.\]
	\begin{enumerate}
		\item $(\Rightarrow)$ If $x_{p,i}=y_p=0$, we get:
		\[ \lambda= \sum_{\substack{(\widehat{z},\phi(\widehat{z})) \in \mathscr{A}_{\mathscr{H}}\\ \xi^+(\widehat{z})=\xi^+_i(\widehat{z})=0 }} \widehat{z} \cdot |\chi^{(\widehat{z},\phi(\widehat{z}))}|^2.\]
		Using equation (\ref{zsingular}), $V_pA_p=Id$, and $c_{p,l} \in \ker V_p$, we have $\langle v_{p,i}, \widehat{z} \rangle=\xi_i(\widehat{z})=\xi_i^+(\widehat{z})-\xi^+(\widehat{z})=0$ and  $\langle v_{p,l}, \widehat{z} \rangle=\xi_l(\widehat{z})=\xi_l^+(\widehat{z})-\xi^+(\widehat{z})=\xi_l^+(\widehat{z}) \geq 0$,  when $\xi^+(\widehat{z})=\xi^+_i(\widehat{z})=0$. Therefore, $\langle v_{p,i},\lambda \rangle=0$ and $\langle v_{p,l},\lambda \rangle \geq 0, \forall l\neq i $. \\
		
		$(\Leftarrow)$ If $\langle v_{p,i},\lambda \rangle=0$ and $\langle v_{p,l},\lambda \rangle \geq 0, \forall l\neq i $. As before, we have $\langle v_{p,i}, \widehat{z} \rangle=\xi_i(\widehat{z})=\xi_i^+(\widehat{z})-\xi^+(\widehat{z})$ and $\langle v_{p,l}, \widehat{z}\rangle=\xi_l(\widehat{z})=\xi_l^+(\widehat{z})-\xi^+(\widehat{z})$, therefore: 
		\begin{equation} \label{eqvpi}
\langle v_{p,i} , \lambda \rangle = \sum_{(\widehat{z},\phi(\widehat{z})) \in \mathscr{A}_{\mathscr{H}}} (\xi_i^+(\widehat{z})-\xi^+(\widehat{z})) |\chi^{(\widehat{z},\phi(\widehat{z}))}|^2 =0,
		\end{equation}
			and
			\begin{equation} \label{eqvpl}
\langle v_{p,l}, \lambda \rangle=\sum_{(\widehat{z},\phi(\widehat{z})) \in \mathscr{A}_{\mathscr{H}}} (\xi_l^+(\widehat{z})-\xi^+(\widehat{z})) |\chi^{(\widehat{z},\phi(\widehat{z}))}|^2 \geq 0.
			\end{equation}
			
		Recall that at least one of the characters $ y_p,x_{p,1},\dots,x_{p,m_p} $ is equal to zero. If $y_p=0$, then from equation \eqref{eqvpi} we get:
		\[ \langle v_{p,i} , \lambda \rangle = |x_{p,i}|^2 + \sum_{\substack{(\widehat{z},\phi(\widehat{z})) \in \mathscr{A}_{\mathscr{H}} \setminus \{\widetilde{a}_{p,i}\} \\ \xi^+(\widehat{z})=0 }} \xi_i^+(\widehat{z}) |\chi^{(\widehat{z},\phi(\widehat{z}))}|^2 =0, \]
		so $ x_{p,i}=0 $ as we want.\\
		 If $ x_{p,i}=0 $, then from equation \eqref{eqvpi} we get:
		 \[ \langle v_{p,i} , \lambda \rangle = -|y_p|^2 - \sum_{\substack{(\widehat{z},\phi(\widehat{z})) \in \mathscr{A}_{\mathscr{H}} \setminus \{\widetilde{b}_p\} \\ \xi_i^+(\widehat{z})=0 }} \xi^+(\widehat{z}) |\chi^{(\widehat{z},\phi(\widehat{z}))}|^2 \geq 0, \]
		 so $ y_p=0 $ as we want.\\
		 If $ x_{p,l}=0, l \neq i $, then from equation \eqref{eqvpl} we get:
		 \[ \langle v_{p,l} , \lambda \rangle = -|y_p|^2 - \sum_{\substack{(\widehat{z},\phi(\widehat{z})) \in \mathscr{A}_{\mathscr{H}} \setminus \{\widetilde{b}_p\} \\ \xi_l^+(\widehat{z})=0 }} \xi^+(\widehat{z}) |\chi^{(\widehat{z},\phi(\widehat{z}))}|^2 \geq 0, \]
		 so $ y_p=0 $. From equation \eqref{eqvpi}, we get $ x_{p,i}=0 $ as above.
		 
		\item $(\Rightarrow)$ If $x_{p,i}=x_{p,j}=0,i\neq j$, we get:
		\[ \lambda= \sum_{\substack{(\widehat{z},\phi(\widehat{z})) \in \mathscr{A}_{\mathscr{H}}\\ \xi^+_i(\widehat{z})=\xi^+_j(\widehat{z})=0 }} \widehat{z} \cdot |\chi^{(\widehat{z},\phi(\widehat{z}))}|^2.\]
		As before, using equation (\ref{zsingular}), $V_pA_p=Id$, and $c_{p,l} \in \ker V_p$, we have $\langle v_{p,i}-v_{p,j}, \widehat{z} \rangle=\xi_i(\widehat{z})-\xi_j(\widehat{z})=\xi_i^+(\widehat{z})-\xi_j^+(\widehat{z})=0$, $ \langle v_{p,i}, \widehat{z} \rangle=\xi_i(\widehat{z})=\xi_i^+(\widehat{z})-\xi^+(\widehat{z})\leq 0 $, and $ \langle v_{p,i}-v_{p,l}, \widehat{z} \rangle=\xi_i(\widehat{z})-\xi_l(\widehat{z})=\xi_i^+(\widehat{z})-\xi_l^+(\widehat{z})\leq 0 $, when $\xi^+_i(\widehat{z})=\xi^+_j(\widehat{z})=0$. Therefore, $\langle v_{p,i}-v_{p,j},\lambda \rangle=0$, $\langle v_{p,i},\lambda \rangle \leq 0$, and $\langle v_{p,i}-v_{p,l},\lambda \rangle\leq0,\forall l\neq i,j$.\\
		
		$(\Leftarrow)$  If $\langle v_{p,i}-v_{p,j}, \lambda  \rangle=0$, $\langle v_{p,i}, \lambda  \rangle\leq 0$, and  $\langle v_{p,i} - v_{p,l}, \lambda  \rangle \leq 0,\forall l\neq i,j$. As above:
		\begin{equation} \label{eqvpij}
	\langle  v_{p,i}-v_{p,j}, \lambda \rangle=\sum_{(\widehat{z},\phi(\widehat{z})) \in \mathscr{A}_{\mathscr{H}}} (\xi_i^+(\widehat{z})-\xi_j^+(\widehat{z})) |\chi^{(\widehat{z},\phi(\widehat{z}))}|^2 = 0,
\end{equation}
		\begin{equation} \label{eqvpieq0}
			\langle v_{p,i} , \lambda \rangle = \sum_{(\widehat{z},\phi(\widehat{z})) \in \mathscr{A}_{\mathscr{H}}} (\xi_i^+(\widehat{z})-\xi^+(\widehat{z})) |\chi^{(\widehat{z},\phi(\widehat{z}))}|^2 \leq 0,
		\end{equation}
	and,
	\begin{equation} \label{eqvpil}
		\langle  v_{p,i}-v_{p,l}, \lambda \rangle=\sum_{(\widehat{z},\phi(\widehat{z})) \in \mathscr{A}_{\mathscr{H}}} (\xi_i^+(\widehat{z})-\xi_l^+(\widehat{z})) |\chi^{(\widehat{z},\phi(\widehat{z}))}|^2 \leq 0.
	\end{equation}
	Recall that at least one of the characters $ y_p,x_{p,1},\dots,x_{p,m_p} $ is equal to zero. If $x_{p,i}=0$, then from equation \eqref{eqvpij} we get:
\[ \langle v_{p,i} - v_{p,j}  , \lambda \rangle = -|x_{p,j}|^2 - \sum_{\substack{(\widehat{z},\phi(\widehat{z})) \in \mathscr{A}_{\mathscr{H}} \setminus \{\widetilde{a}_{p,j}\} \\ \xi_i^+(\widehat{z})=0 }} \xi_j^+(\widehat{z}) |\chi^{(\widehat{z},\phi(\widehat{z}))}|^2 =0, \]
so $ x_{p,j}=0 $ as we want.\\
If $x_{p,j}=0$, then from equation \eqref{eqvpij} we get:
\[ \langle v_{p,i} - v_{p,j}  , \lambda \rangle = |x_{p,i}|^2 + \sum_{\substack{(\widehat{z},\phi(\widehat{z})) \in \mathscr{A}_{\mathscr{H}} \setminus \{\widetilde{a}_{p,i}\} \\ \xi_j^+(\widehat{z})=0 }} \xi_i^+(\widehat{z}) |\chi^{(\widehat{z},\phi(\widehat{z}))}|^2 =0, \]
so $ x_{p,i}=0 $ as we want.\\
If $ x_{p,l}=0, l \neq i,j $, then from equation \eqref{eqvpil} we get:
\[ \langle v_{p,i} - v_{p,l} , \lambda \rangle = |x_{p,i}|^2 + \sum_{\substack{(\widehat{z},\phi(\widehat{z})) \in \mathscr{A}_{\mathscr{H}} \setminus \{\widetilde{b}_p\} \\ \xi_l^+(\widehat{z})=0 }} \xi_i^+(\widehat{z}) |\chi^{(\widehat{z},\phi(\widehat{z}))}|^2 \leq 0, \]
so $ x_{p,i}=0 $. From equation \eqref{eqvpij}, we get $ x_{p,j}=0 $ as above.\\
If $y_p=0$, then from equation \eqref{eqvpieq0} we get:
\[ \langle v_{p,i} , \lambda \rangle = |x_{p,i}|^2 + \sum_{\substack{(\widehat{z},\phi(\widehat{z})) \in \mathscr{A}_{\mathscr{H}} \setminus \{\widetilde{a}_{p,i}\} \\ \xi_i^+(\widehat{z})=0 }} \xi_i^+(\widehat{z}) |\chi^{(\widehat{z},\phi(\widehat{z}))}|^2 \leq 0, \]
so $ x_{p,i}=0 $. From equation \eqref{eqvpij}, we get $ x_{p,j}=0 $ as above.
	\end{enumerate}
\end{proof}
\subsubsection{Convex base diagram}
Now we will study the convex base diagram produced by this singular Lagrangian fibration. As in Section \ref{examples}, we will consider codimension one cuts in the base of the Lagrangian fibration, in the complement of which we will have a toric structure. Let us now assume that $ |\epsilon_{i+1}-1|>|\epsilon_{i}-1| $ for $ i=1,\dots,k-1$, so the circles $ \gamma(r) $ pass first through $ \epsilon_i $ and then through $ \epsilon_{i+1} $ as $ r $ increases. If $ r<1 $, $ T_{\gamma(r),\delta} $ determine a toric structure and we can take action coordinates $ (\lambda_1, \dots, \lambda_{n+1}) $ given by the symplectic flux concerning a reference fibre $ T_{\gamma(r_0),\textbf{0}} $, as in the action-angle coordinates given in \cite{duistermaat1980global}. Considering the limit when $ r_0=0 $, we can interpret $ \lambda_{n+1} $ as the symplectic area of a disk with boundary in a cycle of $ T_{\gamma(r),\delta} $ that collapses as $ r \to 0 $.\\

Our tori, $ T_{\gamma(r),\lambda} $, have fixed values $ \lambda=(\lambda_1,\dots,\lambda_n) $. Therefore, when $r=|\epsilon_p-1|$, we have singular fibres for values $ \lambda$ related with $ M_p $ and given by Lemma \ref{collapsingclasses}. To kill monodromies around singular fibres, we introduce codimension one cuts (i.e., disregard fibres) for which $ r\geq |\epsilon_{p}-1|  $ and $ \lambda$ is related with $ M_p $ and given by Lemma \ref{collapsingclasses}, as above. (Recall Figure \ref{fig:cbdQ5} on our toy model.)\\

We call the image $ B $ of $ \pi: Y_{\widetilde{\sigma},\epsilon} \to \mathbb{R}^{n+1} $ given by $ (\lambda_1,\dots,\lambda_{n+1}) $, and denote by $ \Sigma $ the locus on $ B $ corresponding to singular Lagrangian fibres. With the same notations as in Section \ref{affine}, let $S_p$ be the image of the singular fibres corresponding to the case when $\epsilon_{p}\in\gamma(r)$, and let the cones $C_p$ be defined with $v_p=(\bold{0},1), \bold{0} \in \mathbb{R}^n$, for $p=1,\dots,k$. Let
\begin{equation} \label{Dp0}
	D_{p,0} = \{ (\lambda_1,\dots,\lambda_{n+1}) \in \text{Im}(\pi) | \langle v_{p,l}, \lambda \rangle >0, \forall l=1,\dots,m_p \},
\end{equation}
\begin{equation} \label{Dpj}
	D_{p,j} = \{ (\lambda_1,\dots,\lambda_{n+1}) \in \text{Im}(\pi) | \langle v_{p,j}, \lambda \rangle <0, \langle v_{p,j}-v_{p,l}, \lambda \rangle <0, \forall l\neq j \}, 
\end{equation}
  for $ j=1,\dots,m_p $, and define $\mathfrak{b}_{p,j}$, a circle in the convex base diagram going around the $ j $-th codimension two strata of the set of singular fibres associated with $ M_p $, as in Section \ref{affine}.\\

In order to fix a basis for $ H_1(T_{\gamma(r),\delta}) $, recall the definition of $\theta_i$ in equation (\ref{circles}) and  let $ \theta_{n+1} $ be the collapsing class associated with $ \lambda_{n+1} $. Fix $\{\theta_1,\dots,\theta_{n+1}\}$ as the basis for $ H_1(T_{\gamma(r),\delta}) $. We will study the topological and affine monodromy of the circles $\mathfrak{b}_{p,j}$.\\

  Each $\mathfrak{b}_{p,j}$ will cross the hyperplane $\langle v_{p,j} , \lambda \rangle =0$, so the topological monodromy of $\mathfrak{b}_{p,j}$ is:
 \[ 	M_{\mathfrak{b}_{p,j}}=\begin{pmatrix}
 	\makebox(0,0){\text{\huge Id}} & \begin{matrix}
 			 \vertbar \\
 			 v_{p,j} \\
 			\vertbar \\
 		\end{matrix} \\
 	\begin{matrix}
0 & \dots & 0 
 	\end{matrix} &1
 \end{pmatrix}, \]
because the $ (\theta_1,\dots,\theta_{n}) $ cycles, corresponding to the $ T^n $-action, remain invariant, while the $ \theta_{n+1} $ cycle is twisted in the direction of the collapsing cycle of the monodromy.\\

Hence the affine monodromy, in dual coordinates of $ (\theta_1,\dots,\theta_{n+1}) $, is the transpose inverse of $ M_{\mathfrak{b}_{p,j}} $:
 \[M_{\mathfrak{b}_{p,j}}^{\text{af}}= \begin{pmatrix}
 	\makebox(0,0){\text{\huge Id}} & \begin{matrix}
 		0 \\
 		\vdots \\
 		0 \\
 	\end{matrix} \\
 	\begin{matrix}
 		\horzbar & -v_{p,j} & \horzbar  
 	\end{matrix} &1
 \end{pmatrix} \]
 for $j=1,\dots,m_p$.\\
 
In this setting, we do the transferring the cut operation (Definition \ref{tcut}) concerning $C_p$ for $p=1,\dots,k$. We claim that applying the matrices $ M_{\mathfrak{b}_{p,j}}^{\text{af}} $ as above, we cancel the monodromies related to crossing the hyperplane $\langle v_{p, i}-v_{p,j}, \lambda \rangle=0$. These affine monodromies are of the form:
 \[M_{ v_{p,j}- v_{p,i}}^{\text{af}} =\begin{pmatrix}
	\makebox(0,0){\text{\huge Id}} & \begin{matrix}
		0 \\
		\vdots \\
		0 \\
	\end{matrix} \\
	\begin{matrix}
		\horzbar & v_{p,i}- v_{p,j} & \horzbar  
	\end{matrix} &1
\end{pmatrix} \]
with $ j\neq i  $, and act in $ \overline{D_{p,j}}$. The monodromy $M_{ v_{p,i}- v_{p,j}}^{\text{af}} =M_{\mathfrak{b}_{p,i}} \cdot (M_{\mathfrak{b}_{p,j}})^{-1} $ acts in $ \overline{D_{p,i}} $. Note that in the transferring the cut operation, we are applying $M_{\mathfrak{b}_{p,j}}^{\text{af}} $ in $ \overline{D_{p,j}} $.  We can conclude that the monodromies related to $ v_{p,j}- v_{p, i} $ are canceled.\\

We conclude this section with a lemma that relates the convex base diagram obtained after applying all the transferring the cut operations and the cone $\sigma^\vee $.

\begin{lem} \label{newdiag}
	The convex base diagram, obtained after applying all the transferring the cut operations to the convex base diagram $ B $, has image $\sigma^\vee $.
\end{lem}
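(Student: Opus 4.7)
The plan is to compute the composite piecewise-linear map $\Psi$ induced by the successive transferring-the-cut operations with respect to each $C_p$, and to identify its image with $\sigma^\vee$ via Altmann's description of the dual cone through the function $\eta_0$.

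First, I would analyze a single transferring-the-cut operation. The operation with respect to $C_p$ leaves $\overline{D_{p,0}}$ unchanged and applies the affine monodromy $M_{\mathfrak{b}_{p,j}}^{\text{af}}$ computed in the preceding paragraphs to each $\overline{D_{p,j}}$ for $j=1,\dots,m_p$. In coordinates this reads $(\lambda,\lambda_{n+1}) \mapsto (\lambda,\lambda_{n+1} - \langle v_{p,j},\lambda\rangle)$ on $\overline{D_{p,j}}$. Adopting the convention $v_{p,0}=0$, the entire operation can be written uniformly as
\[(\lambda,\lambda_{n+1}) \longmapsto (\lambda,\; \lambda_{n+1} - \langle v_{p,j_p(\lambda)},\lambda\rangle),\]
where $j_p(\lambda)$ is the index with $\lambda \in \overline{D_{p,j_p(\lambda)}}$.

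Second, the inequalities \eqref{Dp0}--\eqref{Dpj} exhibit $\{\overline{D_{p,j}}\}_{j=0}^{m_p}$ (projected to $\mathbb{R}^n$) as precisely the normal fan of the simplex $M_p$, with $\overline{D_{p,j}}$ being the normal cone at the vertex $v_{p,j}$. Consequently, on $\overline{D_{p,j}}$,
\[\langle v_{p,j_p(\lambda)},\lambda\rangle \;=\; \min_{l=0,\dots,m_p}\langle v_{p,l},\lambda\rangle \;=\; -\max\langle \lambda,-M_p\rangle.\]
Since the operations for distinct $p$ all commute (each one only modifies the last coordinate), their composite is
\[\Psi(\lambda,\lambda_{n+1}) \;=\; \left(\lambda,\; \lambda_{n+1} + \sum_{p=1}^k \max\langle \lambda,-M_p\rangle\right) \;=\; (\lambda,\; \lambda_{n+1} + \eta_0(\lambda)),\]
using Altmann's formula $\eta_0(\lambda)=\sum_p \max\langle \lambda,-M_p\rangle$ for an admissible Minkowski decomposition. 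Combining this with the standard description $\sigma^\vee = \{(v,h) \in \mathbb{R}^{n+1} : h \geq \eta_0(v)\}$ (from $\sigma=\mathrm{Cone}\{(q,1):q\in Q\}$), the proof reduces to showing that the image of the original base diagram $B$ is the upper half-space $\{(\lambda,\lambda_{n+1}) : \lambda_{n+1}\geq 0\}$.

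The main obstacle I expect is precisely this last step. The inequality $\lambda_{n+1}\geq 0$ follows from the interpretation of $\lambda_{n+1}$ as the symplectic area of a disk bounded by a vanishing cycle of $T_{\gamma(r),\delta}$ as $r\to 0$. Surjectivity onto $\mathbb{R}^n\times[0,\infty)$ requires that, for every fixed $\delta \in \mathbb{R}^n$, the $T^n$-moment map $\lambda$ surjects onto $\mathbb{R}^n$ on the generic fibre $f^{-1}(1)\cong(\mathbb{C}^*)^n$, and that for each such $\delta$ the area $\lambda_{n+1}$ depends continuously and monotonically on $r$, covering $[0,\infty)$ as $r\in[0,\infty)$. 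Once this half-space description is established, applying $\Psi$ yields $\Psi(B) = \{(\lambda,\tilde\lambda_{n+1}) : \tilde\lambda_{n+1} \geq \eta_0(\lambda)\} = \sigma^\vee$, completing the proof.
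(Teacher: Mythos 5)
Your proposal is correct and takes essentially the same route as the paper: you compute that the composite piecewise-linear map sends $(\lambda,\lambda_{n+1})$ to $(\lambda,\lambda_{n+1}+\eta_0(\lambda))$, match this against Altmann's description $\sigma^\vee=\{(v,h): h\ge\eta_0(v)\}$, and observe that it remains to identify the original image $B$ with the upper half-space $\{\lambda_{n+1}\ge 0\}$. The paper performs the same bookkeeping (verifying $\max\langle c,-M_p\rangle=0$ on $\overline{D_{p,0}}$ and $\max\langle c,-M_p\rangle=\langle c,-v_{p,j}\rangle$ on $\overline{D_{p,j}}$, then summing over $p$), and also concludes quickly by noting that $\partial B=\{(c,0): c\in\mathbb{R}^n\}$; your phrasing in terms of the normal fan of the simplex $M_p$ is a slightly cleaner way to see the identity $\langle v_{p,j_p(\lambda)},\lambda\rangle=\min_l\langle v_{p,l},\lambda\rangle=-\max\langle\lambda,-M_p\rangle$, but the content is the same, and you are right that the half-space description of $B$ (positivity and surjectivity of $\lambda_{n+1}$, which the paper treats as established from the flux construction in the preceding discussion) is the only remaining thing to nail down.
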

 \begin{proof}
 	We will consider a description of a set of generators of $ \sigma^\vee $  given in \cite{altmann1997versal}. To each $ c\in \mathbb{Z}^n $, we associate an integer by $ \eta_0(c):= \max \{
 \langle c, -Q \rangle\} $. By the definition of $ \eta_0 $, we have
\[  \partial\sigma^\vee \cap  \mathbb{Z}^{n+1} = \{(c, \eta_0(c)) | c \in \mathbb{Z}^n\} . \]

Moreover, if $ c_1,\dots, c_w \in \mathbb{Z}^n \setminus \textbf{0} $ are those elements producing irreducible pairs
$ (c, \eta_0(c)) $ (i.e. not allowing any non-trivial lattice decomposition $ (c, \eta_0(c))=(c', \eta_0(c'))$ $+(c'', \eta_0(c''))$), then the elements
\[ (c_1, \eta_0(c_1)), \dots, (c_w, \eta_0(c_w)), (\textbf{0},1) \]
form a generator set for $ \sigma^\vee \cap \mathbb{Z}^{n+1} $ as a semigroup.\\

We also have the following equality:
\begin{align*}
	\eta_0(c)&= \max \{\langle c, -Q \rangle\} \\
	&= \sum_{p=1}^{k} \max \{\langle c, -M_p \rangle\} \\
	&= \sum_{p=1}^{k} \langle \phi(c), e_p \rangle \\
\end{align*}
This equality shows the relation between generators of $ \sigma^\vee $ and our set $ \mathscr{A}_{\mathscr{H}} $ of  generators of $ \widetilde{\sigma}^\vee $.\\

Let $ (c,d) \in \mathbb{R}^n\times\mathbb{R} $. If $ (c,d) \in \overline{D_{p,0}} $, by (\ref{Dp0}), we have that $ \langle v_{p,l}, c \rangle \geq 0, \forall l=1,\dots,m_p $, then $ \max \{\langle c, -M_p \rangle\} =0 $. Note that this coincides with the fact that we do not apply any matrix to $ \overline{D_{p,0}} $.\\

 If $ (c,d) \in \overline{D_{p,j}} $, by (\ref{Dpj}), we have that $ \langle v_{p,j}, c \rangle \leq 0, \langle v_{p,j}-v_{p,l}, c \rangle \leq 0, \forall l\neq j $, then $ \max \{\langle c, -M_p \rangle\} = \langle c, -v_{p,j} \rangle  $. Recall that in the transferring the cut operation related with $C_p$, we apply the matrix $ M_{\mathfrak{b}_{p,j}}^{\text{af}} $ to $ \overline{D_{p,j}} $, therefore we get:
 
 \begin{equation} \label{applymonodromy}
 	\begin{pmatrix}
 	\makebox(0,0){\text{\huge Id}} & \begin{matrix}
 		0 \\
 		\vdots \\
 		0 \\
 	\end{matrix} \\
 	\begin{matrix}
 		\horzbar & -v_{p,j} & \horzbar  
 	\end{matrix} &1
 \end{pmatrix} \begin{pmatrix}
 	\vertbar \\
 	c \\
 	\vertbar \\
 	d
 \end{pmatrix} = \begin{pmatrix}
 \vertbar \\
 c \\
 \vertbar \\
  \max \{\langle c, -M_p \rangle\} + d
\end{pmatrix}. 
 \end{equation}

 In equation \eqref{applymonodromy}, the additive property in the last coordinate of applying the transferring the cut operations is clear. After applying all the monodromies to $ (c,0) $, we get:
\[ \left(c,\sum_{p=1}^{k} \max \{\langle c, -M_p \rangle\} \right)=\left(c,\sum_{p=1}^{k} \langle \phi(c), e_p \rangle \right)= (c,\eta_0(c)). \]
Since the boundary of our initial diagram $ B $ is formed by $ (c,0), $ $ c \in \mathbb{R}^n $, we deduce the relation between $ \sigma^\vee  $ and the convex base diagram. 
 \end{proof}
 
\subsubsection{Monotone fibres} 
 
 Considering the new diagram from Lemma \ref{newdiag},  we are going to prove that the fibres over $\mathbb{R}e_{n+1} \cap \sigma^{\vee} $ outside of the cuts are monotone. Recall that the faces of the cone $\sigma^{\vee} $ are of the form $H_m\cap \sigma^\vee$ for some $m\in \sigma $, where 
 \[ H_m:=\{ u\in \mathbb{R}^{n+1} |\langle u , m \rangle =0  \}. \]
 
Our cone $\sigma$ is generated by elements of the form $(v,1), v\in \mathbb{R}^n$. Therefore, we have the following result.

\begin{lem}
The area of a disk with boundary on the torus $ \mathfrak{L}_s $, associated to $ (\textbf{0},s) \in \sigma^\vee $ with $ \textbf{0} \in \mathbb{R}^n $ and $ s \in \mathbb{R}_{\geq0} $, and Maslov index 2 is $ s $.
\end{lem}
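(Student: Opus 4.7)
The plan is to exploit the almost-toric description provided by Lemma \ref{newdiag}: after transferring all cuts, the base diagram has image $\sigma^\vee$ and, in the complement of the cuts, the symplectic-affine structure agrees with the standard affine structure on $\mathbb{R}^{n+1}$. Since $(\mathbf{0},s)$ lies in the interior of $\sigma^\vee$ for $s>0$ and can be connected to the origin along the ray $\mathbb{R}_{\geq 0}e_{n+1}$ without crossing any transferred cut, near $\mathfrak{L}_s$ the Lagrangian fibration is symplectically modeled on a standard toric fibration with moment polytope $\sigma^\vee$. This reduces the lemma to a standard toric computation.

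I would next identify the primitive inward normals to the facets of $\sigma^\vee$. By the description used in the proof of Lemma \ref{newdiag}, these normals are the primitive generators of the rays of $\sigma=C(Q)$; since $Q$ is a lattice polytope, they are exactly the integral vectors $(v_\alpha,1)\in\mathbb{Z}^n\times\mathbb{Z}$, where $v_\alpha$ runs over the vertices of $Q$.

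For each such facet, the standard almost-toric construction (as in \cite{auroux2007mirror,auroux2009special,symington71four}) produces a family of Maslov index $2$ disks with boundary on $\mathfrak{L}_s$: the boundary is the $S^1$-cycle in $T^{n+1}$ that collapses as one approaches the facet, and the disk is swept out by that collapsing orbit. In the toric model, the symplectic area of such a disk equals the pairing of the moment-map value with the primitive inward normal of the facet, giving
\[ \mathrm{area}(D_\alpha)=\langle(\mathbf{0},s),(v_\alpha,1)\rangle=s, \]
independently of $\alpha$. Thus every Maslov index $2$ disk arising from a facet of $\sigma^\vee$ has area $s$.

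The main obstacle is justifying that these disks really have Maslov index $2$ and that the toric area formula remains valid in the presence of the singular fibres and of the cuts produced by the transferring-the-cut operations. The key point is that these operations are piecewise-linear changes of base diagram implementing the affine monodromy of the Lagrangian fibration, and they preserve the symplectic data of the fibration. Alternatively, one can construct the disks explicitly via parallel transport in the complex fibration $f$ of Theorem \ref{fibration}, where the singular fibres correspond through the Minkowski decomposition to the vertices of $Q$; a direct computation of $\int_{D_\alpha}\omega$ using the explicit coordinates of Section \ref{subacf} then recovers the pairing $\langle(\mathbf{0},s),(v_\alpha,1)\rangle=s$, mirroring the toy computation in Section \ref{examples}.
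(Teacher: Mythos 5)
Your computation of the areas of the basic facet disks matches the paper's, up to how the toric formula is invoked: you pair the moment value $(\mathbf{0},s)$ directly with the primitive inward normal $(v_\alpha,1)$, while the paper parameterizes a path $c(t)=(\mathbf{0},s)-t(v,1)$ to the facet and integrates $\langle c'(t),\mathfrak{v}\rangle\,dt$. These give the same answer $s$, and your observation that all cone facets pass through the apex (so no constant shifts appear) is the reason the cleaner pairing formula works here.

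However, your argument is incomplete. The lemma asserts that \emph{every} Maslov index $2$ disk with boundary on $\mathfrak{L}_s$ has area $s$ — this is the monotonicity statement $\omega=\tfrac{s}{2}\mu$ on $\pi_2(Y_{\widetilde\sigma,\epsilon},\mathfrak{L}_s)$ — but you only compute the areas of the standard disks attached to facets. The gap is the passage from those special classes to arbitrary relative homotopy classes. The paper closes this by noting (i) the boundaries of the facet disks generate $\pi_1(\mathfrak{L}_s)$, (ii) the map $\pi_1(\mathfrak{L}_s)\to\pi_1(Y_{\widetilde\sigma,\epsilon})$ vanishes so $\pi_2(Y_{\widetilde\sigma,\epsilon},\mathfrak{L}_s)\cong\pi_1(\mathfrak{L}_s)\oplus\pi_2(Y_{\widetilde\sigma,\epsilon})$, and (iii) $Y_{\widetilde\sigma,\epsilon}$ is affine, hence Weinstein, so $c_1$ and $\omega$ both vanish on $\pi_2(Y_{\widetilde\sigma,\epsilon})$. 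Together these force both $\omega$ and $\mu$ on $\pi_2(Y_{\widetilde\sigma,\epsilon},\mathfrak{L}_s)$ to be determined by their values on the facet classes, where your computation gives $\omega=\tfrac{s}{2}\mu$. Without step (iii) in particular, a nonzero $\omega$ or $c_1$ in the absolute $\pi_2$ could spoil the conclusion; you should add this homotopy-theoretic argument to complete the proof.
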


\begin{proof}
	From toric geometry, we know that the disk given by carrying the collapsing cycle $ \mathfrak{v}$ associated to a facet along a path $ c(t) \subset B^{\text{reg}} $ from the reference fibre to the corresponding facet, has Maslov index two and area $ \int_{0}^{T} \langle c'(t),\mathfrak{v} \rangle dt $. The normals to the faces of the cone $\sigma^\vee$ are of the form $(v,1), v\in \mathbb{R}^n$, then we consider
	\[c(t)=L_{v}(t)= \begin{pmatrix}
		\bold{0}\\
		s
	\end{pmatrix} - t \begin{pmatrix}
		v\\
		1
	\end{pmatrix} \]
	
	and the time $t_0$ such that $L_v(t_0)\in H_{(v,1)}$. We get the following equation:
	\[ \left\langle \begin{pmatrix}
		\bold{0}\\
		s
	\end{pmatrix} - t_0 \begin{pmatrix}
		v\\
		1
	\end{pmatrix}, \begin{pmatrix}
		v\\
		1
	\end{pmatrix} \right\rangle=0, \]
	therefore:
	\[t_0=\frac{s}{|v|^2+1}.\]
	
	Then the area of the disk is:
	\[ \int_{0}^{t_0} \langle c'(t),\mathfrak{v} \rangle dt = \int_{0}^{t_0} \langle (v,1),(v,1) \rangle dt=s.  \]
	
	Now note that the boundaries of the above disks generate $ \pi_1(\mathfrak{L}_s) $. In particular,  $ \pi_1(\mathfrak{L}_s)\xrightarrow{0}\pi_1(Y_{\widetilde{\sigma},\epsilon}) $ and hence $ \pi_2(Y_{\widetilde{\sigma},\epsilon},\mathfrak{L}_s) \cong \pi_1(\mathfrak{L}_s) \oplus \pi_2(Y_{\widetilde{\sigma},\epsilon}) $. We know that $ Y_{\widetilde{\sigma},\epsilon} $ is affine, hence Weinstein, so $ c_1=\omega=0 $ in $ \pi_2(Y_{\widetilde{\sigma},\epsilon}) $, therefore all Maslov index 2 disks with boundary on $ \mathfrak{L}_s $ have area $ s $, i.e., $ \omega=\frac{s}{2} \mu $ in $ \pi_2(Y_{\widetilde{\sigma},\epsilon},\mathfrak{L}_s)  $.
\end{proof}

\subsection{Special Lagrangian condition}

Let $(X,\omega, J)$ be a Kähler $n$-dimensional manifold, $D$ a divisor of $X$, and $\Omega$ a non-vanishing holomorphic $n$-form on $X\setminus D$. Recall the definition of special Lagrangian submanifold from \cite{auroux2007mirror}. 

\begin{dfn} [{\cite[Definition~2.1]{auroux2007mirror}}]
	A Lagrangian submanifold $L\subset X \setminus D$ is \textbf{special Lagrangian} with phase $\phi$ if $\text{Im }(e^{-i\phi}\Omega)_{|_L}=0$.
\end{dfn}

In $Y_{\widetilde{\sigma},\epsilon}\setminus \{t_1=1\}$, consider the holomorphic $(n+1,0)$-form given by 

\[\Omega = i^{n+1}\frac{dx_{1,1} \wedge \dots \wedge dx_{1,m_1} \wedge dw_{1,1}^+\wedge \dots dw_{1,n-m_1}^+ \wedge dt_1 }{(t_1-1)\prod x_{1,j}\prod w_{1,l}^+}. \]

Similar to the approach in \cite[Proposition~5.2]{auroux2007mirror}, we can prove that the Lagrangians from Definition \ref{sing} are special.

\begin{prop}
	The tori $T_{\gamma(r),\delta}$ are special Lagrangian with respect to $\Omega$.
\end{prop}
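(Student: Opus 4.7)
The plan is a direct computation of $\Omega|_L$ on $L := T_{\gamma(r),\delta}$, adapting the argument of \cite[Proposition~5.2]{auroux2007mirror} to our global coordinates. I would first rewrite
\[\Omega = i^{n+1}\bigwedge_{j=1}^{m_1}\frac{dx_{1,j}}{x_{1,j}}\wedge\bigwedge_{l=1}^{n-m_1}\frac{dw^+_{1,l}}{w^+_{1,l}}\wedge\frac{dt_1}{t_1-1},\]
use the polar decomposition $\tfrac{d\chi}{\chi}=d\log|\chi|+i\,d\arg\chi$ on each factor, and parametrize $L$ by the $T^n$-angles $(\theta_1,\ldots,\theta_n)$ of equation~\eqref{circles} together with the angle $\phi\in S^1$ on $\gamma(r)$ via $t_1-1=re^{i\phi}$.

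The three facts I would verify on $L$ are: (i) for any character $\chi^q$, the modulus $|\chi^q|$ is $T^n$-invariant and, together with the constraints $\lambda=\delta$ and $|t_1-1|=r$, is determined by $\phi$ alone, so $d\log|\chi^q|$ restricts to a multiple of $d\phi$ on $L$; (ii) for $\chi^q=\chi^{(\widehat z,\phi(\widehat z))}$ the phase transforms with weights $\widehat z_i$ under the $T^n$-action, so $d\arg\chi^q|_L=\sum_{i=1}^n \widehat z_i\,d\theta_i+h_q(\phi)\,d\phi$ for some function $h_q$; (iii) $t_1$ is $T^n$-invariant and $|t_1-1|=r$ is constant on $L$, so $\frac{dt_1}{t_1-1}|_L=i\,d\phi$.

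Combining (i)--(iii), each of the first $n$ factors in $\Omega$ restricts on $L$ to a form of type $i\cdot(\text{linear in }d\theta_i\text{'s})+(\text{complex})\,d\phi$. Wedging with the final $i\,d\phi$ kills every $d\phi$-contribution from the first $n$ factors, leaving
\[\Omega|_L = i^{2n+2}\bigwedge_{j=1}^{m_1}\!\Big(\textstyle\sum_i (a_{1,j})_i\,d\theta_i\Big)\wedge\bigwedge_{l=1}^{n-m_1}\!\Big(\textstyle\sum_i (c_{1,l})_i\,d\theta_i\Big)\wedge d\phi.\]
Since $i^{2n+2}=(-1)^{n+1}$ and the inner wedge equals $\det\begin{pmatrix}A_1 & C_1\end{pmatrix}\cdot d\theta_1\wedge\cdots\wedge d\theta_n = \pm d\theta_1\wedge\cdots\wedge d\theta_n$ by \eqref{relationsmatrix}, we conclude that $\Omega|_L$ is a nonzero real $(n+1)$-form, hence $\text{Im}(\Omega|_L)=0$ and $L$ is special Lagrangian of phase $0$ or $\pi$.

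The main obstacle I anticipate is justifying observation~(i) rigorously: that symplectic parallel transport along $\gamma(r)$ takes the $T^n$-orbit with moment value $\delta$ in one fibre to the orbit with the same moment value in a neighboring fibre, so that $|\chi^q|$ is indeed a function of $\phi$ alone. This reduces to the $T^n$-equivariance of the symplectic connection on $f:Y_{\widetilde{\sigma},\epsilon}\to\mathbb{C}$, which holds because $T^n$ preserves both $f$ and the ambient symplectic form on $\mathbb{C}^{|\mathscr{A}_{\mathscr{H}}|}$; once this is in place the rest of the argument is formal linear algebra.
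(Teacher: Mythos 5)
Your proof is correct and, modulo presentation, is the same argument as the paper's: the paper contracts $\Omega$ with the $T^n$-generating vector fields $X_{\theta_k}$ and the Hamiltonian vector field $X_H$ of $|t_1-1|^2$ (producing $\pm i^{2n+1}\tfrac{dt_1}{t_1-1}$ via $\det Z_1=\pm 1$ and then killing the imaginary part since $X_H$ preserves $|t_1-1|$), which is the vector-field dual of your expansion into $d\log|\cdot|+i\,d\arg(\cdot)$ and wedging against $i\,d\phi$. Your worry about (i) is in fact unnecessary and slightly misattributed: you do not need symplectic parallel transport at all, since $T_{\gamma(r),\delta}$ is \emph{defined} as the level set $f^{-1}(\gamma(r))\cap\lambda^{-1}(\delta)$, and $|\chi^q|$ being $T^n$-invariant immediately gives $\iota_{X_{\theta_k}}d\log|\chi^q|=0$, which is all the paper's contraction argument (and yours) actually uses.
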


\begin{proof}
	Let $H:Y_{\widetilde{\sigma},\epsilon} \to \mathbb{R}$ given by $|t_1 -1|^2$, and let $X_H$ the vector field such that $\iota_{X_H}\omega=dH$. As in the proof of \cite[Proposition~5.2]{auroux2007mirror}, we have that $X_H$ is tangent to $T_{\gamma(r),\delta}$.\\
	The tangent space to $T_{\gamma(r),\delta}$ is spanned by $X_H$ and by the vector fields generating the $T^n$-action. These vector fields are given by
	
\begin{equation*}
	X_{\theta_k}=i\left(\sum_{j=1}^{m_p} (a_{1,j})_k x_{1,j} \frac{\partial}{\partial x_{1,j}} + 
	\sum_{l=1}^{n-m_p} (c_{1,l})_k w^+_{1,l} \frac{\partial}{\partial w^+_{1,l}} \right)
\end{equation*}

for $k=1,\dots,n$, where $(a_{1,j})_k$ and $(c_{1,l})_k $ denotes the $k$-th coordinates of $a_{1,j}$ and $c_{1,l}$, respectively. Recalling that $\det Z_i=\pm 1$ (see \eqref{relationsmatrix})
\begin{equation*}
	\Omega(X_{\theta_1},\dots,X_{\theta_n},\cdot)= \pm i^{2n+1} \frac{dt_1}{t_1-1}=\pm i^{2n+1}d(\log(t_1-1))
\end{equation*}
Therefore, $\text{Im }\Omega(X_{\theta_1},\dots,X_{\theta_n},X_H)= \pm d(\log |t_1-1|)(X_H)$, which vanishes since  $X_H$ is tangent to the level sets of $H$.  

\end{proof}

\section{Wall-crossing} \label{secwallc}

In this section, we discuss a general formula for the potential by studying the wall-crossing phenomena. We are going to consider the monotone fibres $ T_{\gamma(r),0} $; for $ r >|\epsilon_k-1| $ that arises as a series of "wall-crossing" mutations (\cite{pascaleff2020wall}) from the monotone fibre $ T_{\gamma(r),0}, r<|\epsilon_1-1| $.\\

The tori $T_{\gamma(r),0}$ is an oriented spin (taking the standard spin structure on the torus) monotone Lagrangian of dimension $n+1$ and, given a choice of basis for $H_1(T_{\gamma(r),0})$ and assuming $T_{\gamma(r),0}$ bounds no Maslov zero disks, its potential $W_{T_{\gamma(r),0}} \in \mathbb{C}[z_1^{\pm},\dots,z_{n+1}^{\pm}]$, encoding information of Maslov index two holomorphic disks with boundary on $T_{\gamma(r),0}$, is given by \cite[Definition~2.6]{pascaleff2020wall}. Recall the basis $ \{\theta_1,\dots, \theta_{n+1}\} $ of $H_1(T_{\gamma(r),0},\mathbb{Z})$ defined in Section \ref{singlagfib} and let $ z_i $ be the variable associated to a disk with boundary in $ \theta_i $. With this basis, the potential takes the form:
\begin{equation}
	W_{T_{\gamma(r),0}}= \sum_{\substack{\beta \in H_2( Y_{\widetilde{\sigma},\epsilon},T_{\gamma(r),0})\\ \mu(\beta)=2 }} n_\beta \cdot \textbf{z}^{\partial \beta}
\end{equation}
where $n_{\beta}$ is the count of disks in class $\beta$, we consider $\partial \beta \in H_1(T_{\gamma(r),0}) \cong \mathbb{Z}^{n+1}$, and denote $\textbf{z}^l=z_1^{l_1}\cdots z_{n+1}^{l_{n+1}}$. We are interested in studying how this potential changes when we increase the value of $ r $ in $ T_{\gamma(r),0} $. These changes in the potential occur whenever $T_{\gamma(r),0}$ passes through a wall formed by tori $T_{\gamma(r),\lambda}$ that bound Maslov zero disks. Hence the walls are formed by the tori corresponding to $ r $ equals to $|\epsilon_{p}-1|$ for $ p=1,\dots,k $. By Theorem \ref{fibration}, our local model around the walls corresponds to the one studied in \cite[Section~5.4]{pascaleff2020wall}. Therefore we can use the mutation formula \cite[Lemma~5.17]{pascaleff2020wall} in the form:
\begin{lem}[{\cite[Lemma~5.17]{pascaleff2020wall}}]
	Let $|\epsilon_{p-1}-1|<r<|\epsilon_{p}-1|<r'<|\epsilon_{p+1}-1|$. The potentials $W_{T_{\gamma(r),0}}$ and $W_{T_{\gamma(r'),0}}$ are related by the mutation
	\[ W_{T_{\gamma(r),0}}=\mu (W_{T_{\gamma(r),0}}), \]
	where $\mu$ is given by
	\begin{align*}
		z_i&\mapsto z_i, \quad i=1,\dots,n,\\
		z_{n+1}&\mapsto z_{n+1}(1+z^{v_{p,1}}+\dots+z^{v_{p,m_p}} ),
	\end{align*}
	where we recall that $v_{p,l}\in \mathbb{Z}^n$ and consider $z^{v}=z_1^{v_1}\cdots z_n^{v_n}$.
\end{lem}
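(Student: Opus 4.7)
The plan is to reduce the proof to a direct application of the Pascaleff--Tonkonog wall-crossing formula \cite[Lemma~5.17]{pascaleff2020wall} at the singular fibre of the complex fibration $f:Y_{\widetilde{\sigma},\epsilon}\to \mathbb{C}$ lying over $\epsilon_p$.

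First, I would verify that the only wall separating $T_{\gamma(r),0}$ from $T_{\gamma(r'),0}$ in the one-parameter family $\{T_{\gamma(s),0}\}_{s}$ corresponds to $s=|\epsilon_p-1|$. Maslov-zero holomorphic disks with boundary on a torus $T_{\gamma(s),\lambda}$ can only appear when $\gamma(s)$ meets some $\epsilon_q$ and the collapsing conditions of Lemma~\ref{collapsingclasses} are satisfied, and the assumption on the ordering $|\epsilon_{p-1}-1|<r<|\epsilon_p-1|<r'<|\epsilon_{p+1}-1|$ isolates $\epsilon_p$ as the unique such value inside the interval $(r,r')$.

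Next, I would localise the computation to a neighbourhood of $f^{-1}(\epsilon_p)$ using the biholomorphism of Theorem~\ref{fibration}, which identifies such a neighbourhood with $\{x_0 x_1\cdots x_{m_p}=0\}\subset \mathbb{C}^{m_p+1}\times (\mathbb{C}^*)^{n-m_p}$ in the global coordinates $y_p, x_{p,1},\dots,x_{p,m_p},w^+_{p,1},\dots,w^+_{p,n-m_p}$. Under this identification, the Hamiltonian $T^n$-action, the projection $f$, and the tori $T_{\gamma(s),\lambda}$ correspond exactly to the local model of wall-crossing studied in \cite[\S5.4]{pascaleff2020wall}. The collapsing cycles at the wall are the classes $v_{p,\ell}\cdot(\theta_1,\dots,\theta_n)\in H_1(T_{\gamma(s),0})$ for $\ell=1,\dots,m_p$, identified in Lemma~\ref{collapsingclasses}, while $\theta_{n+1}$ is the class of the boundary of the Maslov-two sectional disk that gets mutated across the wall.

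Third, I would plug these identifications into \cite[Lemma~5.17]{pascaleff2020wall}. The formula there dictates that the monomial $z_{n+1}$ associated with the mutating Maslov-two class is replaced by $z_{n+1}$ times $1$ plus the sum of the monomials corresponding to the new Maslov-zero disks that appear after crossing the wall; in our basis these monomials are precisely $z^{v_{p,\ell}}$ for $\ell=1,\dots,m_p$. The variables $z_1,\dots,z_n$, corresponding to the $T^n$-invariant classes $\theta_1,\dots,\theta_n$, remain untouched, reflecting the fact that the $T^n$-orbits are preserved by symplectic parallel transport across the wall. Assembling these pieces gives exactly the stated mutation. The main obstacle is a careful bookkeeping: one must verify that the standard spin structure on the torus, the chosen orientations of the $\theta_i$, and the direction of increasing $r$ across the wall all match the conventions of \cite{pascaleff2020wall}, so that every coefficient appearing in the mutation comes out to $+1$ as written. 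Once those conventions are pinned down, the formula follows by a direct transcription from the Pascaleff--Tonkonog local model.
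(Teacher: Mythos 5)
Your proposal is correct and matches the paper's approach: the paper also establishes this by observing that the walls occur at $r=|\epsilon_p-1|$, that Theorem~\ref{fibration} identifies the local model near the singular fibre with the one studied in \cite[\textsection 5.4]{pascaleff2020wall}, and then directly invokes \cite[Lemma~5.17]{pascaleff2020wall}. You flesh out the bookkeeping (uniqueness of the wall, collapsing cycles from Lemma~\ref{collapsingclasses}, sign and convention checks) a bit more explicitly than the paper does, but the logical route is the same.
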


  From this, the potential function, $\mathfrak{PO}:=W_{T_{\gamma(r),0}}$ for $r>|\epsilon_k-1|$, associated to our construction based on an admissible Minkowski decomposition $Q=M_1+\dots + M_k$ is:
  \begin{equation} \label{potential}
  	 \mathfrak{PO}(z_1,\dots,z_{n+1})=z_{n+1}\prod_{i=1}^k \left( 1+\sum_{j=1}^{m_i} z^{v_{i,j}} \right),
  \end{equation}

  where $v_{i,j} \in \mathbb{Z}^n$ are the non-zero vertices of $M_i$ and we use the notation $z^{(v_1,\dots,v_n)}=z_1^{v_1}\cdots z_n^{v_n}$.\\
  
  \begin{rmk}
  	The potential \eqref{potential} was found by Lau \cite[Corollary 4.16]{lau2014open} using a different method.
  \end{rmk}
  
  Let $\rho \in (\mathbb{K}^*)^{n+1}$ encode a local system of $T_{\gamma(r),0}$, with respect to the given base of $H_1(T_{\gamma(r),0})$. Here we will take $K^*= \Lambda^*, \mathbb{C}^*,$ or $ \mathrm{U}(1)$ depending on the setting, where $\Lambda$ denotes the Novikov field over $\mathbb{C}$. The self Floer cohomology of $({T_{\gamma(r),0}},\rho)$ is non-zero if and only if $\rho$ is a critical point of $W_{T_{\gamma(r),0}}$ (see \cite[Remark~2.2]{pascaleff2020wall}, \cite[Corollary~2.8]{vianna2016continuum}, \cite[Theorem~2.3]{fukaya2012toric}). Let
  \[ P_i:= \left( 1+\sum_{j=1}^{m_i} z^{v_{i,j}} \right)\]
  for $i=1,\dots,k$.
  
  \begin{lem} \label{critpoint}
  	For $K^*= \Lambda^*, \mathbb{C}^*,$ or $ \mathrm{U}(1)$, the potential function (\ref{potential}) has a critical point in $(K^*)^{n+1}$ if and only if there exist $i,j \in \{1,\dots,k\},i\neq j, $ such that the system of equations $P_i=P_j=0$ has a solution in $(K^*)^{n+1} $.
  \end{lem}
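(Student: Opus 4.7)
The plan is to compute the critical-point equations of $\mathfrak{PO}$ directly, exploiting that $\mathfrak{PO} = z_{n+1} \prod_{i=1}^k P_i$ is a product in which $z_{n+1}$ appears only as a multiplicative factor and each $P_i$ depends only on $z_1,\dots,z_n$. Using the logarithmic derivations $z_l \partial_{z_l}$ (which is legitimate since we are on $(K^*)^{n+1}$), the critical-point system becomes
\begin{align*}
0 &= z_{n+1}\partial_{z_{n+1}}\mathfrak{PO} = z_{n+1}\prod_{i=1}^k P_i,\\
0 &= z_l \partial_{z_l}\mathfrak{PO} = z_{n+1}\sum_{i=1}^k \Bigl(z_l \partial_{z_l}P_i\Bigr)\prod_{q\neq i}P_q, \qquad l=1,\dots,n.
\end{align*}
Since $z_{n+1}\in K^*$, the first equation forces at least one $P_i$ to vanish.

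For the ``if'' direction, I would simply observe that if $P_i=P_j=0$ simultaneously at some $(z_1,\dots,z_n)\in (K^*)^n$, with $i\neq j$, then in each of the sums above every term contains at least one of the factors $P_i$ or $P_j$ (for $m\neq i,j$ the product $\prod_{q\neq m}P_q$ contains both; for $m=i$ or $m=j$ it contains the other), so all partial derivatives vanish for any choice of $z_{n+1}\in K^*$. This gives the desired critical point.

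For the ``only if'' direction, suppose $(z_1,\dots,z_{n+1})$ is a critical point, and assume for contradiction that exactly one $P_i$ vanishes. Then in the $l$-th equation the only surviving summand is the one with $m=i$, which forces $z_l\partial_{z_l}P_i=0$ for all $l=1,\dots,n$, i.e.
\begin{equation*}
\sum_{j=1}^{m_i}(v_{i,j})_l\, z^{v_{i,j}} = 0\qquad \text{for each }l=1,\dots,n.
\end{equation*}
Rewriting this as the single vector equation $\sum_{j=1}^{m_i} z^{v_{i,j}}\, v_{i,j} = 0$ in $K^n$, the admissibility hypothesis (Definition \ref{admdecomp}) guarantees that the vectors $v_{i,1},\dots,v_{i,m_i}$ are linearly independent, so this relation forces each $z^{v_{i,j}}=0$; but $z^{v_{i,j}}\in K^*$, a contradiction. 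Hence at least two of the $P_i$'s must vanish simultaneously at $(z_1,\dots,z_n)$.

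The main conceptual point — and the only place where admissibility of the Minkowski decomposition enters — is the linear independence step in the ``only if'' direction; the rest is a bookkeeping argument on the product rule. The one thing to be slightly careful about is ensuring linear independence is valid over each of the coefficient rings $K = \Lambda, \mathbb{C}, \mathrm{U}(1)$, but since the $v_{i,j}$ lie in the lattice $\mathbb{Z}^n$ and are linearly independent over $\mathbb{R}$, the matrix $V_i$ has a nonzero $m_i\times m_i$ minor and hence the map $K^{m_i}\to K^n$ given by $V_i^T$ is injective in each of these settings, so the contradiction goes through uniformly.
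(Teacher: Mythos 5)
Your proof is correct and follows essentially the same route as the paper's: use the $z_{n+1}$-derivative to force some $P_i=0$, then if no second factor vanishes, the remaining critical-point equations reduce to $V_i^T(z^{v_{i,1}},\dots,z^{v_{i,m_i}})^T=0$, and admissibility (linear independence of the $v_{i,j}$) gives the contradiction. Your remark about the unimodular minor of $V_i$ making the injectivity uniform over the various coefficient settings is a small but welcome refinement of the paper's one-line appeal to linear independence.
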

  \begin{proof}
  	First, suppose that there exist $i,j \in \{1,\dots,k\},i\neq j, $ such that the system of equations $P_i=P_j=0$ has a solution in $(K^*)^{n+1} $, then:
  	\begin{align*}
  		\frac{\partial \mathfrak{PO}}{\partial z_{n+1}}= \prod_{l=1}^k P_l=0,
  	\end{align*}
  	and
  	\[ \frac{\partial \mathfrak{PO}}{\partial z_{m}}= z_{n+1} \left( \sum_{l=1}^k \frac{\partial P_l}{\partial z_{m}} \cdot \prod_{q\neq l} P_q \right) =0 \]
  	for $m\neq n+1$. Therefore, the potential function has a critical point in $(K^*)^{n+1} $.\\
  	
  	Now, suppose that the potential function has a critical point in $(K^*)^{n+1} $, then
  	\[ \frac{\partial \mathfrak{PO}}{\partial z_{n+1}}= \prod_{l=1}^k P_l=0\]
  	implies that there exists $i \in \{1,\dots,k\}$, such that $P_i=0$. Therefore, for $m\neq n+1$, we have:
  	
  	\begin{align*}
  		\frac{\partial \mathfrak{PO}}{\partial z_{m}} &= z_{n+1} \left( \frac{\partial P_i}{\partial z_{m}} \cdot \prod_{q\neq i} P_q +  P_i \left( \sum_{l\neq i} \frac{\partial P_l}{\partial z_{m}} \cdot \prod_{\substack{q\neq l \\ q\neq i } } P_q \right) \right) \\
  		&= z_{n+1} \frac{\partial P_i}{\partial z_{m}} \cdot \prod_{q\neq i} P_q =0.
  	\end{align*}
  	
  	Since the critical point is in $(K^*)^{n+1} $, $\frac{\partial P_i}{\partial z_{m}}=0 $ or there exists $j\neq i$  such that $P_j=0$. For the sake of contradiction, suppose that there does not exist $j\neq i$ such that $P_j=0$. In this case $\frac{\partial P_i}{\partial z_{m}}=0,\forall m $, moreover:
  	\[ \begin{pmatrix}
  		0\\
  		\vdots \\
  		0
  	\end{pmatrix} =\begin{pmatrix}
  		z_1 \frac{\partial P_i}{\partial z_{1}}\\
  		\vdots \\
  		z_n \frac{\partial P_i}{\partial z_{n}}
  	\end{pmatrix} = \begin{pmatrix}
			\vertbar & & \vertbar\\
			v_{i,1} & \dots & v_{i,m_i} \\ 
			\vertbar & & \vertbar
		\end{pmatrix}  \begin{pmatrix}
  		z^{v_{i,1}}\\
  		\vdots \\
  		z^{v_{i,m_i}}
  	\end{pmatrix}. \]  
  	The non-zero vertices $v_{i,1},\dots,v_{i,m_i}$ of $M_i$ are linearly independent because we are working with an admissible Mikowski decomposition. Therefore, $z^{v_{i,1}} =\dots = z^{v_{i,m_i}}=0 $, and it is a contradiction as our critical point is in $(K^*)^{n+1} $. We conclude that there exists $j\neq i$  such that $P_j=0$ as we want.
  \end{proof}

\begin{exmp}
	The potential function related to our toy example in Section \ref{Q5} is:
	
	\[ \mathfrak{PO}(z_1,z_2,z_3)= z_3(1+z_1z_2)(1+z_1+z_2). \]
Using Lemma \ref{critpoint}, we conclude that this potential has two one-parameter families of critical points.
\end{exmp}

\subsection{On the convex hull of the potential}

We will analyze the monomials in $ \mathfrak{PO} $.
\begin{lem}
	The Newton polytope of the potential is $Q\times \{1\} \subset \mathbb{R}^{n+1}.$ 
\end{lem}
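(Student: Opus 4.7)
The plan is to expand the product defining $\mathfrak{PO}$ monomial by monomial and identify the support, then invoke the standard Minkowski-sum-of-convex-hulls identity.

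First I would expand
\[ \mathfrak{PO}(z_1,\dots,z_{n+1}) = z_{n+1}\prod_{i=1}^{k}\Bigl(1+\sum_{j=1}^{m_i} z^{v_{i,j}}\Bigr) \]
by choosing, for each $i\in\{1,\dots,k\}$, one term $u_i$ from the $i$-th factor, where $u_i\in V_i:=\{0,v_{i,1},\dots,v_{i,m_i}\}$. This set $V_i$ is precisely the vertex set of $M_i$, using that $0\in M_i$ is a vertex and the other $v_{i,j}$ are the remaining vertices (Definition \ref{admdecomp}). A general monomial is then $z_{n+1}\cdot z^{u_1+\cdots+u_k}$, i.e.\ has exponent vector $(u_1+\cdots+u_k,1)\in \mathbb{R}^n\times\mathbb{R}$. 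Since every coefficient in the expansion is a sum of products of $1$'s and hence a positive integer, no cancellation occurs and the support of $\mathfrak{PO}$ is exactly
\[ \mathrm{Supp}(\mathfrak{PO}) = \bigl\{(u_1+\cdots+u_k,\,1) : u_i\in V_i,\ i=1,\dots,k\bigr\}. \]

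Next I would invoke the elementary fact that for finite sets $S_1,S_2\subset\mathbb{R}^n$,
\[ \mathrm{Conv}(S_1)+\mathrm{Conv}(S_2) = \mathrm{Conv}(S_1+S_2), \]
and iterate. Applied to $S_i = V_i$, this gives $\mathrm{Conv}(V_1+\cdots+V_k)=M_1+\cdots+M_k = Q$. Therefore the Newton polytope, being by definition the convex hull of $\mathrm{Supp}(\mathfrak{PO})$, is
\[ \mathrm{Conv}\bigl(V_1+\cdots+V_k\bigr)\times\{1\} = Q\times\{1\}, \]
as claimed.

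There is essentially no obstacle here: the only points worth checking carefully are that the $0$ appearing in each factor of $\mathfrak{PO}$ really corresponds to the vertex $0\in M_i$ (ensured by admissibility) and that positivity of the coefficients rules out cancellation. No cancellations in the Novikov-theoretic sense are possible either, since every term in the expansion is a positive integer monomial in $z_1^{\pm},\dots,z_{n+1}^{\pm}$. Thus the identification $N(\mathfrak{PO})=Q\times\{1\}$ follows immediately.
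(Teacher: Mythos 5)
Your proof is correct and takes essentially the same approach as the paper: expand the product, identify the exponent set as $V_1+\cdots+V_k$ at height $1$ (noting the positive coefficients rule out cancellation), and take the convex hull. Your final step invokes $\mathrm{Conv}(S_1+S_2)=\mathrm{Conv}(S_1)+\mathrm{Conv}(S_2)$ directly, whereas the paper instead observes that the support set $\mathfrak{P}$ satisfies $\partial Q\cap\mathbb{Z}^n\subseteq\mathfrak{P}\subseteq Q\cap\mathbb{Z}^n$ and concludes from there --- a cosmetic difference, with your version being slightly more direct.
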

\begin{proof}
	 Set $ v_{i,0}=\textbf{0} \in \mathbb{Z}^n $ for $ i=1,\dots,k $. Observe that

\begin{align*}
\prod_{i=1}^k \left( 1+\sum_{j=1}^{m_i} z^{v_{i,j}} \right) &= \prod_{i=1}^k \left( \sum_{j=0}^{m_i} z^{v_{i,j}} \right)\\
&=\sum_{i_1,\dots,i_k} z^{v_{1,i_1}+\dots+v_{k,i_k}}.
\end{align*}

Let $ \mathfrak{P}:=\{ v_{1,i_1}+\dots+v_{k,i_k} | i_j\in \{0,\dots,m_j\},\forall j=1,\dots,k \} $. From the definition of a Minkowski sum (Definition \ref{minksum}), we have $ \partial Q \cap \mathbb{Z}^n \subseteq \mathfrak{P} \subseteq Q \cap \mathbb{Z}^n $. Therefore, using \eqref{potential}, we conclude that the Newton polytope of the potential is $Q\times \{1\} \subset \mathbb{R}^{n+1}.$ 
\end{proof}

\section{Other Examples} \label{otherex}

In this section, we will study some examples following the same steps as above. We will leave some details to the reader. 

\subsection{Cone of $ Q_6 $} \label{Q6}
Consider $ Q_6:= \text{Conv}\{ (0,0),(1,0),(0,1),(2,1),(1,2),(2,2) \} \subset \mathbb{R}^2 $ and its two Minkowski decompositions:

\subsubsection{First decomposition of $ Q_6 $ } \label{firstdecomp}

\begin{equation} 
	\vcenter{\hbox{\begin{tikzpicture}
				\draw[fill=gray!25] (0, 0)  -- (1,0) -- (2,1) -- (2,2) --(1,2)-- (0,1) -- cycle;
				\path (1,1) -- (0,0) node[pos=1.3]{$ (0,0) $};
				\path (1,1) -- (1,0) node[pos=1.3]{$ (1,0) $};
				\path (1,1) -- (0,1) node[pos=1.5]{$ (0,1) $};
				\path (1,1) -- (2,1) node[pos=1.5]{$ (2,1) $};
				\path (1,1) -- (1,2) node[pos=1.3]{$ (1,2) $};
				\path (1,1) -- (2,2) node[pos=1.3]{$ (2,2) $};
	\end{tikzpicture}}} = \vcenter{\hbox{\begin{tikzpicture}
				\draw[fill=gray!25] (0, 0)  -- (1,0) -- (1,1) -- cycle;
				\path (1,1) -- (0,0) node[pos=1.3]{$ (0,0) $};
				\path (1,1) -- (1,0) node[pos=1.3]{$ (1,0) $};
				\path (0.5,0.5) -- (1,1) node[pos=1.5]{$ (1,1) $};
	\end{tikzpicture}}} + \vcenter{\hbox{\begin{tikzpicture}
				\draw[fill=gray!25] (0, 0)  -- (0,1) -- (1,1) -- cycle;
				\path (1,1) -- (0,0) node[pos=1.3]{$ (0,0) $};
				\path (1,0) -- (0,1) node[pos=1.3]{$ (0,1) $};
				\path (0.5,0.5) -- (1,1) node[pos=1.5]{$ (1,1) $};
	\end{tikzpicture}}}
\end{equation}

We write $ Q_6 =M_1 +M_2$, where $ M_1 =\text{Conv}\{ (0,0),(1,0),(1,1)\} $ and $ M_2 = \text{Conv}\{ (0,0),(0,1), (1,1) \}  $. We follow the same steps as in Section \ref{thm} to study the convex base diagram and the potential.\\

Recalling our restricted Lagrangian fibration construction, the walls are in $r=|\epsilon_i-1|$. The collapsing classes corresponding to the wall $ r=1 $ are $ (1,0,0) $ and $ (1,1,0) $, and the collapsing classes corresponding to the wall $ r=|1-\epsilon| $ are $ (0,1,0) $ and $ (1,1,0) $ in the base $ (\theta_1,\theta_2,\theta_3) $, as in \eqref{circles}. So, we obtain the convex base diagram presented on the left of Figure \ref{fig:cbdQ61d}.\\

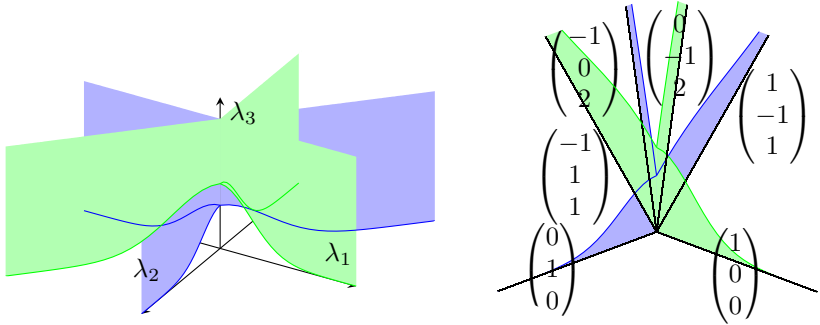
\begin{figure}[h!] 
	
	\centering
	\begin{tikzpicture}
		\begin{axis}
			[scale=0.6,
			axis x line=center,
			axis y line=center,
			axis z line=middle,
			zmin=0.0,
			zmax=3.5,
			ticks=none,
			enlargelimits=false,
			view={-30}{-45},
			xlabel = \(\lambda_1\),
			ylabel = {\(\lambda_2\)},
			zlabel = {\(\lambda_3\)}
			]
			\addplot3[name path=A,color=blue,domain=0:10,samples y=1] (x,-x, {exp(-x^2/10)});
			\addplot3[name path=C,color=blue,domain=0:10,samples y=1] (0,x, {exp(-x^2/10)});
			\addplot3[name path=D,color=blue,domain=0:10,samples y=1] (-x,0, {exp(-x^2/10)});
			\addplot3[name path=B,variable=t,smooth,draw=none,domain=0:10]  (t,-t, 3);
			\addplot3[name path=E,variable=t,smooth,draw=none,domain=0:10]  (0,t, 3);
			\addplot3[name path=F,variable=t,smooth,draw=none,domain=0:10]  (-t,0, 3);
			\addplot3 [blue!30,fill opacity=0.5] fill between [of=A and B];
			\addplot3 [blue!30,fill opacity=0.5] fill between [of=C and E];
			\addplot3 [blue!30,fill opacity=0.5] fill between [of=D and F];

			\addplot3[name path=A,color=green,domain=0:10,samples y=1] (-x,x, {1.5*exp(-x^2/10)});
			\addplot3[name path=C,color=green,domain=0:10,samples y=1] (x,0, {1.5*exp(-x^2/10)});
			\addplot3[name path=D,color=green,domain=0:10,samples y=1] (0,-x, {1.5*exp(-x^2/10)});
			\addplot3[name path=B,variable=t,smooth,draw=none,domain=0:10]  (-t,t, 3);
			\addplot3[name path=E,variable=t,smooth,draw=none,domain=0:10]  (t,0, 3);
			\addplot3[name path=F,variable=t,smooth,draw=none,domain=0:10]  (0,-t, 3);
			
			\addplot3 [green!30,fill opacity=0.5] fill between [of=A and B];
			\addplot3 [green!30,fill opacity=0.5] fill between [of=C and E];
			\addplot3 [green!30,fill opacity=0.5] fill between [of=D and F];
		\end{axis}
		
	\end{tikzpicture}
	\begin{tikzpicture}
		\begin{axis}
			[scale=0.6,
			hide axis,
			zmin=0.0,
			zmax=3.5,
			ticks=none,
			enlargelimits=false,
			view={-45}{-30}
			]
			\addplot3[name path=A,color=blue,domain=0:10,samples y=1] (0,x, {exp(-x^2/10)});
			\addplot3[name path=B,color=blue,domain=0:7,samples y=1] (-x,0, {exp(-x^2/10)+2*x});
			\addplot3[name path=C,color=blue,domain=0:7,samples y=1] (x,-x, {exp(-x^2/10)+x});
			\addplot3[name path=D,variable=t,domain=0:10]  (0,t, 0) node [pos=0.095] {$ \begin{pmatrix}
					0\\1\\0
				\end{pmatrix} $};
			\addplot3[name path=E,variable=t,domain=0:7]  (-t,0, 2*t) node [left, pos=0.004] {$ \begin{pmatrix}
					-1\\0\\2
				\end{pmatrix} $};
			\addplot3[name path=F,variable=t,domain=0:7]  (t,-t, t) node [right, pos=0.006] {$ \begin{pmatrix}
					1\\-1\\1
				\end{pmatrix} $};
			\addplot3 [blue!30,fill opacity=0.5] fill between [of=A and D];
			\addplot3 [blue!30,fill opacity=0.5] fill between [of=B and E];
			\addplot3 [blue!30,fill opacity=0.5] fill between [of=C and F];
			
			\addplot3[name path=A,color=green,domain=0:10,samples y=1] (-x,x, {1.5*exp(-x^2/10)+x});
			\addplot3[name path=B,color=green,domain=0:10,samples y=1] (0,-x, {1.5*exp(-x^2/10)}+2*x);
			\addplot3[name path=C,color=green,domain=0:10,samples y=1] (x,0, {1.5*exp(-x^2/10)});
			\addplot3[name path=D,variable=t,domain=0:10]  (-t,t, t) node [left, pos=0.002] {$ \begin{pmatrix}
					-1\\1\\1
				\end{pmatrix} $};
			\addplot3[name path=E,variable=t,domain=0:10]  (0,-t, 2*t) node [pos=0.003] {$ \begin{pmatrix}
					0\\-1\\2
				\end{pmatrix} $};
			\addplot3[name path=F,variable=t,domain=0:10]  (t,0, 0) node [left, pos=0.015] {$ \begin{pmatrix}
					1\\0\\0
				\end{pmatrix} $};
			\addplot3 [green!30,fill opacity=0.5] fill between [of=A and D];
			\addplot3 [green!30,fill opacity=0.5] fill between [of=B and E];
			\addplot3 [green!30,fill opacity=0.5] fill between [of=C and F];
			
		\end{axis}
	\end{tikzpicture}
	\caption{Convex base diagrams corresponding to the restricted Lagrangian fibration related with the first decomposition of $Q_6$}
	\label{fig:cbdQ61d}
\end{figure}

With the same notations as in Section \ref{affine}, let $S_1$ be the image of the singular fibres corresponding to the case when $0\in\gamma(r)$, and $S_2$ be the image of the singular fibres corresponding to the case when $\epsilon \in \gamma(r)$. The cones $C_i$ for $i=1,2$, are defined with $v_1=v_2=(0,0,1)$. Let
\[ D_{1,0} = \{ (\lambda_1,\lambda_2,\lambda_3) \in \text{Im}(\pi) | \lambda_1>0, \lambda_1+\lambda_2>0  \}, \]   
\[ D_{1,1} = \{ (\lambda_1,\lambda_2,\lambda_3) \in \text{Im}(\pi) | \lambda_1<0, \lambda_2>0\}, \] 
\[ D_{1,2} = \{ (\lambda_1,\lambda_2,\lambda_3) \in \text{Im}(\pi) | \lambda_1+\lambda_2<0, \lambda_2<0\}, \] 
\[ D_{2,0} = \{ (\lambda_1,\lambda_2,\lambda_3) \in \text{Im}(\pi) | \lambda_2>0, \lambda_1+\lambda_2>0 \}, \] 
\[ D_{2,1} = \{ (\lambda_1,\lambda_2,\lambda_3) \in \text{Im}(\pi) | \lambda_1<0, \lambda_1+\lambda_2<0\}, \] 
\[ D_{2,2} = \{ (\lambda_1,\lambda_2,\lambda_3) \in \text{Im}(\pi) | \lambda_1>0, \lambda_2<0\}, \] 
and define $\mathfrak{b}_{i,j}$ as in Section \ref{affine}.\\

The topological monodromies $ M_{\mathfrak{b}_{i,j}}$ around the circles $ \mathfrak{b}_{i,j} $ are:

\[ M_{\mathfrak{b}_{1,1}}=\begin{pmatrix}
	1 & 0 & 1 \\
	0 & 1 & 0 \\
	0 & 0 & 1
\end{pmatrix},  M_{\mathfrak{b}_{1,2}}=\begin{pmatrix}
	1 & 0 & 1 \\
	0 & 1 & 1 \\
	0 & 0 & 1
\end{pmatrix}, M_{\mathfrak{b}_{2,1}} = \begin{pmatrix}
	1 & 0 & 0 \\
	0 & 1 & 1 \\
	0 & 0 & 1
\end{pmatrix},\]
\[ M_{\mathfrak{b}_{2,2}} = \begin{pmatrix}
	1 & 0 & 1 \\
	0 & 1 & 1 \\
	0 & 0 & 1
\end{pmatrix}. \]
respectively.\\

The corresponding affine monodromies are:

\[M_{\mathfrak{b}_{1,1}}^{\text{af}}= \begin{pmatrix}
			1 & 0 & 0 \\
			0 & 1 & 0 \\
			-1 & 0 & 1
		\end{pmatrix}, M_{\mathfrak{b}_{1,2}}^{\text{af}}= \begin{pmatrix}
			1 & 0 & 0 \\
			0 & 1 & 0 \\
			-1 & -1 & 1
		\end{pmatrix}, M_{\mathfrak{b}_{2,1}}^{\text{af}}= \begin{pmatrix}
			1 & 0 & 0 \\
			0 & 1 & 0 \\
			-1 & -1 & 1
		\end{pmatrix},\] \[ M_{\mathfrak{b}_{2,1}}^{\text{af}}= \begin{pmatrix}
			1 & 0 & 0 \\
			0 & 1 & 0 \\
			0 & -1 & 1
		\end{pmatrix}. \]

The result of applying two transferring the cut operations (Definition \ref{tcut}) is obtained by following the steps:

\begin{enumerate}
	\item Apply the matrix $  M_{\mathfrak{b}_{1,1}}^{\text{af}} $ to $ \overline{D_{1,1}} $. 
	\item Apply the matrix $  M_{\mathfrak{b}_{1,2}}^{\text{af}} $ to $ \overline{D_{1,2}} $.
	\item Apply the matrix $  M_{\mathfrak{b}_{2,1}}^{\text{af}} $ to the image of $ \overline{D_{2,1}} $ after applying step 1 and 2. 
	\item Apply the matrix $  M_{\mathfrak{b}_{2,2}}^{\text{af}} $ to the image of $ \overline{D_{2,2}} $ after applying step 1 and 2.
\end{enumerate}
The resulting convex base diagram is presented on the right of Figure \ref{fig:cbdQ61d}.

As we proved in Section \ref{thm}, this cone coincides with:
\[ \sigma^\vee =\text{Cone}\left\{  \begin{pmatrix}
	-1\\0\\2
\end{pmatrix},
\begin{pmatrix}
	-1\\1\\1
\end{pmatrix},
\begin{pmatrix}
	0\\-1\\2
\end{pmatrix},
\begin{pmatrix}
	0\\0\\1
\end{pmatrix},
\begin{pmatrix}
	0\\1\\0
\end{pmatrix},
\begin{pmatrix}
	1\\-1\\1
\end{pmatrix},
\begin{pmatrix}
	1\\0\\0
\end{pmatrix} \right\}. \]
After applying the matrix 
\[ \mathscr{M}=\begin{pmatrix}
	1 & 0 & 0 \\
	0 & 1 & 0 \\
	1 & 1 & 1
	
\end{pmatrix} \in SL(3,\mathbb{Z})\]
to $ \sigma^\vee $, we obtain:
\[ \mathscr{M}\sigma^\vee =\text{Cone}\left\{  \begin{pmatrix}
	-1\\0\\1
\end{pmatrix},
\begin{pmatrix}
	-1\\1\\1
\end{pmatrix},
\begin{pmatrix}
	0\\-1\\1
\end{pmatrix},
\begin{pmatrix}
	0\\0\\1
\end{pmatrix},
\begin{pmatrix}
	0\\1\\1
\end{pmatrix},
\begin{pmatrix}
	1\\-1\\1
\end{pmatrix},
\begin{pmatrix}
	1\\0\\1
\end{pmatrix} \right\}. \]
In this new cone, we have $ Q_6^\vee $ at height 1. 
\begin{figure}[h!]
	\centering
	\begin{tikzpicture}
		\draw[fill=gray!25] (-1,0)  -- (-1,1) --(0,1)-- (1,0) -- (1,-1) -- (0,-1) -- cycle;
		\path (0,0) -- (1,-1) node[pos=1.3]{$ (1,-1) $};
		\path (0,0) -- (-1,1) node[pos=1.3]{$ (-1,1) $};
		\path (0,0) -- (0,1) node[pos=1.3]{$ (0,1) $};
		\path (0,0) -- (1,0) node[pos=1.5]{$ (1,0) $};
		\path (0,0) -- (0,-1) node[pos=1.3]{$ (0,-1) $};
		\path (0,0) -- (-1,0) node[pos=1.7]{$ (-1,0) $};
	\end{tikzpicture}
	\caption{$Q_6^\vee$}
\end{figure}

The corresponding potential function for the monotone Lagrangian $L_a$ is:

\begin{equation} \label{potentialQ6d1}
	\mathfrak{PO}(z_1,z_2,z_3)= z_3(1+z_1+z_1z_2)(1+z_2+z_1z_2).
\end{equation}

Using Lemma \ref{critpoint}, we conclude that this potential has critical points for $K^*= \Lambda^*, \mathbb{C}^*,$ or $ \mathrm{U}(1)$, where $z_1=z_2=e^{\frac{2\pi}{3}ki}$ for $k=1,2$, i.e., are roots of $z^2+z+1$, and $z_3$ is any element of $K^*$. 

\subsubsection{Second decomposition of $ Q_6 $ } \label{seconddecompQ6}

\begin{equation}
	\vcenter{\hbox{\begin{tikzpicture}
				\draw[fill=gray!25] (0, 0)  -- (1,0) -- (2,1) -- (2,2) --(1,2)-- (0,1) -- cycle;
				\path (1,1) -- (0,0) node[pos=1.3]{$ (0,0) $};
				\path (1,1) -- (1,0) node[pos=1.3]{$ (1,0) $};
				\path (1,1) -- (0,1) node[pos=1.5]{$ (0,1) $};
				\path (1,1) -- (2,1) node[pos=1.5]{$ (2,1) $};
				\path (1,1) -- (1,2) node[pos=1.3]{$ (1,2) $};
				\path (1,1) -- (2,2) node[pos=1.3]{$ (2,2) $};
	\end{tikzpicture}}} = \vcenter{\hbox{\begin{tikzpicture}
				\draw[-] (0, 0)  -- (1,0);
				\path (1,1) -- (0,0) node[pos=1.3]{$ (0,0) $};
				\path (0,1) -- (1,0) node[pos=1.3]{$ (1,0) $};
	\end{tikzpicture}}} + \vcenter{\hbox{\begin{tikzpicture}
				\draw[-] (0, 0)  -- (0,1);
				\path (0,1) -- (0,0) node[pos=1.3]{$ (0,0) $};
				\path (0,0) -- (0,1) node[pos=1.3]{$ (0,1) $};
	\end{tikzpicture}}}+ \vcenter{\hbox{\begin{tikzpicture}
				\draw[-] (0, 0)  -- (1,1);
				\path (1,1) -- (0,0) node[pos=1.3]{$ (0,0) $};
				\path (0.5,0.5) -- (1,1) node[pos=1.3]{$ (1,1) $};
	\end{tikzpicture}}}
\end{equation}

We write $ Q_6 =M_1 +M_2+M_3$, where $ M_1 =\text{Conv}\{ (0,0),(1,0)\} $, $ M_2 = \text{Conv}\{ (0,0),(0,1) \}  $, and $ M_3 = \text{Conv}\{ (0,0),(1,1) \}  $. We follow the same steps as in Section \ref{thm} to study the convex base diagram and the potential.\\

Recalling our restricted Lagrangian fibration construction, the walls are in $r=|\epsilon_i-1|$. The collapsing classes corresponding to the walls $ r=1,|1-\epsilon_1|, |1-\epsilon_2| $ are $ (1,0,0) $, $ (0,1,0) $, and $ (1,1,0) $ in the base $ (\theta_1,\theta_2,\theta_3) $, as in \eqref{circles}, respectively. So, we obtain the convex base diagram presented on the left of Figure \ref{fig:cbdQ62d}.\\

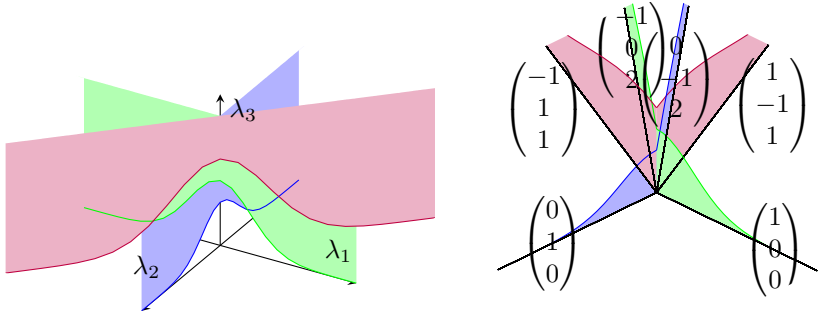
\begin{figure}[h!] 
	
	\centering
	\begin{tikzpicture}
		\begin{axis}
			[scale=0.6,
			axis x line=center,
			axis y line=center,
			axis z line=middle,
			zmin=0.0,
			zmax=3.5,
			ticks=none,
			enlargelimits=false,
			view={-30}{-45},
			xlabel = \(\lambda_1\),
			ylabel = {\(\lambda_2\)},
			zlabel = {\(\lambda_3\)}
			]
			\addplot3[name path=A,color=blue,domain=-10:10,samples y=1] (0,x, {exp(-x^2/10)});
			\addplot3[name path=C,color=green,domain=-10:10,samples y=1] (x,0, {1.5*exp(-x^2/10)});
			\addplot3[name path=D,color=purple,domain=-10:10,samples y=1] (x,-x, {2*exp(-x^2/10)});
			\addplot3[name path=B,variable=t,smooth,draw=none,domain=-10:10]  (0,t, 3);
			\addplot3[name path=E,variable=t,smooth,draw=none,domain=-10:10]  (t,0, 3);
			\addplot3[name path=F,variable=t,smooth,draw=none,domain=-10:10]  (t,-t, 3);
			\addplot3 [blue!30,fill opacity=0.5] fill between [of=A and B];
			\addplot3 [green!30,fill opacity=0.5] fill between [of=C and E];
			\addplot3 [purple!30,fill opacity=0.5] fill between [of=D and F];
			
		\end{axis}
		
	\end{tikzpicture}
	\begin{tikzpicture}
		\begin{axis}
			[scale=0.6,
			hide axis,
			zmin=0.0,
			zmax=3.5,
			ticks=none,
			enlargelimits=false,
			view={-45}{-45}
			]
			\addplot3[name path=A,color=blue,domain=0:10,samples y=1] (0,x, {exp(-x^2/10)});
			\addplot3[name path=B,color=green,domain=0:7,samples y=1] (-x,0, {1.5*exp(-x^2/10)+2*x});
			\addplot3[name path=C,color=purple,domain=0:7,samples y=1] (x,-x, {2*exp(-x^2/10)+x});
			\addplot3[name path=D,variable=t,domain=0:10]  (0,t, 0) node [pos=0.095] {$ \begin{pmatrix}
					0\\1\\0
				\end{pmatrix} $};
			\addplot3[name path=E,variable=t,domain=0:7]  (-t,0, 2*t) node [pos=0.0045] {$ \begin{pmatrix}
					-1\\0\\2
				\end{pmatrix} $};
			\addplot3[name path=F,variable=t,domain=0:7]  (t,-t, t) node [right, pos=0.006] {$ \begin{pmatrix}
					1\\-1\\1
				\end{pmatrix} $};
			\addplot3 [blue!30,fill opacity=0.5] fill between [of=A and D];
			\addplot3 [green!30,fill opacity=0.5] fill between [of=B and E];
			\addplot3 [purple!30,fill opacity=0.5] fill between [of=C and F];
			
			\addplot3[name path=A,color=purple,domain=0:10,samples y=1] (-x,x, {2*exp(-x^2/10)+x});
			\addplot3[name path=B,color=blue,domain=0:10,samples y=1] (0,-x, {exp(-x^2/10)}+2*x);
			\addplot3[name path=C,color=green,domain=0:10,samples y=1] (x,0, {1.5*exp(-x^2/10)});
			\addplot3[name path=D,variable=t,domain=0:10]  (-t,t, t) node [left, pos=0.004] {$ \begin{pmatrix}
					-1\\1\\1
				\end{pmatrix} $};
			\addplot3[name path=E,variable=t,domain=0:10]  (0,-t, 2*t) node [pos=0.0025] {$ \begin{pmatrix}
					0\\-1\\2
				\end{pmatrix} $};
			\addplot3[name path=F,variable=t,domain=0:10]  (t,0, 0) node [pos=0.015] {$ \begin{pmatrix}
					1\\0\\0
				\end{pmatrix} $};
			\addplot3 [purple!30,fill opacity=0.5] fill between [of=A and D];
			\addplot3 [blue!30,fill opacity=0.5] fill between [of=B and E];
			\addplot3 [green!30,fill opacity=0.5] fill between [of=C and F];
			
		\end{axis}
	\end{tikzpicture}
	\caption{Convex base diagrams corresponding to the restricted Lagrangian fibration related with the second decomposition of $Q_6$}
	\label{fig:cbdQ62d}
\end{figure}

As in the first decomposition, we can perform the transferring the cuts operations. The resulting convex base diagram is presented on the right of Figure \ref{fig:cbdQ62d}.\\

We get the same cone as in Section \ref{firstdecomp}. The corresponding potential function is:

\begin{equation} \label{potentialQ62d}
	\mathfrak{PO}(z_1,z_2,z_3)= z_3(1+z_1)(1+z_2)(1+z_1z_2).
\end{equation}

Using Lemma \ref{critpoint}, we conclude that this potential has critical points for $K^*= \Lambda^*, \mathbb{C}^*,$ or $ \mathrm{U}(1)$, where $z_3$ is free to be any element on $K^*$, and $(z_1,z_2)\in \{(-1,-1), (-1,1),(1,-1) \}$.
\begin{rmk}
	These examples were already known to Jonathan Evans and Renato Vianna.
\end{rmk}  

\subsection{Symplectization of unit cosphere bundles of 3-dimensional lens spaces}

As it was pointed out in \cite{abreu2018contact}, the symplectization of $ S^*L^3_p(q) $ is the toric symplectic cone determined by the cone $ C \subset \mathbb{R}^3 $ with normals.
\[ v_1=(q+1,p,1),v_2=(0,0,1),v_3=(1,0,1) \text{, and } v_4=(q,p,1). \]

If we let $ Q:= \text{Conv}\{ (0,0),(1,0),(q,p),(q+1,p) \} \subset \mathbb{R}^2 $, and $ \sigma=C(Q) \subset \mathbb{R}^3 $, then we have that $ C=\sigma^\vee $. For $ Q $, we have the following  Minkowski decomposition:
\[ \vcenter{\hbox{\begin{tikzpicture}
			\draw[fill=gray!25] (0, 0)  -- (1,0) -- (2,2) --(1,2) -- cycle;
			\path (1,1) -- (0,0) node[pos=1.3]{$ (0,0) $};
			\path (1,1) -- (1,0) node[pos=1.3]{$ (1,0) $};
			\path (1,1) -- (1,2) node[pos=1.5]{$ (q,p) $};
			\path (1,1) -- (2,2) node[pos=1.5]{$ (q+1,p) $};
\end{tikzpicture}}} = \vcenter{\hbox{\begin{tikzpicture}
			\draw[-] (0, 0)  -- (1,0);
			\path (1,1) -- (0,0) node[pos=1.3]{$ (0,0) $};
			\path (1,1) -- (1,0) node[pos=1.3]{$ (1,0) $};
\end{tikzpicture}}} + \vcenter{\hbox{\begin{tikzpicture}
			\draw[-] (0, 0)  -- (1,2);
			\path (1,1) -- (0,0) node[pos=1.3]{$ (0,0) $};
			\path (0.5,0.5) -- (1,2) node[pos=1.3]{$ (q,p) $};
\end{tikzpicture}}}\]

\begin{rmk}
	For $p=q=1$, we get the singular quadric studied in \cite{chan2013lagrangian}, which is equivalent to the cone on the unitary square.
\end{rmk}

We write $ Q=M_1 +M_2$, where $ M_1 =\text{Conv}\{ (0,0),(1,0)\} $ and $ M_2 = \text{Conv}\{ (0,0),(q,p) \}  $, so we can use our construction to understand the convex base diagram and the potential function of the monotone fibers in the symplectization of $S^*L^3_p(q)$..\\

Recalling our restricted Lagrangian fibration construction, the walls are in $r=|\epsilon_i-1|$. The collapsing class corresponding to the walls $ r=1, |1-\epsilon| $ are $ (1,0,0) $, $ (q,p,0) $ in the base $ (\theta_1,\theta_2,\theta_3) $, as in \eqref{circles},  respectively. Therefore, we obtain the convex base diagram at the left of Figure \ref{fig:cbdlens}.
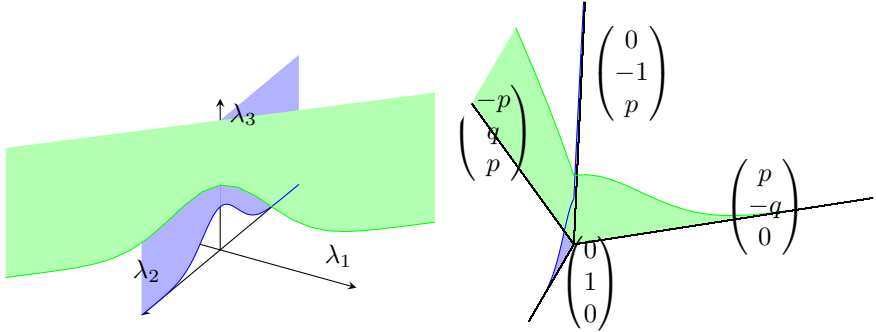
\begin{figure}[h!] 
	
	\centering
	\begin{tikzpicture}
		\begin{axis}
			[scale=0.6,
			axis x line=center,
			axis y line=center,
			axis z line=middle,
			zmin=0.0,
			zmax=3.5,
			ticks=none,
			enlargelimits=false,
			view={-30}{-45},
			xlabel = \(\lambda_1\),
			ylabel = {\(\lambda_2\)},
			zlabel = {\(\lambda_3\)}
			]
			\addplot3[name path=A,color=blue,domain=0:10,samples y=1] (0,x, {exp(-x^2/10)});
			\addplot3[name path=D,color=blue,domain=0:10,samples y=1] (0,-x, {exp(-x^2/10)});
			\addplot3[name path=B,variable=t,smooth,draw=none,domain=0:10]  (0,t, 3);
			\addplot3[name path=F,variable=t,smooth,draw=none,domain=0:10]  (0,-t, 3);
			\addplot3 [blue!30,fill opacity=0.5] fill between [of=A and B];
			\addplot3 [blue!30,fill opacity=0.5] fill between [of=D and F];
			
			\addplot3[name path=G,color=green,domain=-10:10,samples y=1] (-2*x,x, {1.5*exp(-x^2/10)});
			\addplot3[name path=H,variable=t,smooth,draw=none,domain=-10:10]  (2*t,-t, 3);
			\addplot3 [green!30,fill opacity=0.5] fill between [of=G and H];
			
		\end{axis}
	\end{tikzpicture}
	\begin{tikzpicture}
		\begin{axis}
			[scale=0.6,
			hide axis,
			zmin=0.0,
			zmax=3.5,
			ticks=none,
			enlargelimits=false,
			view={-15}{-45}
			]
			\addplot3[name path=C,color=blue,domain=0:10,samples y=1] (0,x, {exp(-x^2/10)});
			\addplot3[name path=D,color=blue,domain=0:7,samples y=1] (0,-x, {exp(-x^2/10)+2*x});
			\addplot3[name path=E,variable=t,domain=0:10]  (0,t, 0) node [right, pos=0.990] {$ \begin{pmatrix}
					0\\1\\0
				\end{pmatrix} $};
			\addplot3[name path=F,variable=t,domain=0:7]  (0,-t, 2*t) node [right, pos=0.005] {$ \begin{pmatrix}
					0\\-1\\p
				\end{pmatrix} $};
			\addplot3 [blue!30,fill opacity=0.5] fill between [of=C and E];
			\addplot3 [blue!30,fill opacity=0.5] fill between [of=D and F];
			
			\addplot3[name path=G,color=green,domain=0:10,samples y=1] (-2*x,x, {1.5*exp(-x^2/10)+2*x});
			\addplot3[name path=J,color=green,domain=0:10,samples y=1] (2*x,-x, {1.5*exp(-x^2/10)});
			\addplot3[name path=H,variable=t,domain=0:10]  (2*t,-t, 0) node [left, pos=0.996] {$ \begin{pmatrix}
					p\\-q\\0
				\end{pmatrix} $};
			\addplot3[name path=I,variable=t,domain=0:10]  (-2*t,t, t) node [pos=0.0055] {$ \begin{pmatrix}
					-p\\q\\p
				\end{pmatrix} $};
			\addplot3 [green!30,fill opacity=0.5] fill between [of=G and I];
			\addplot3 [green!30,fill opacity=0.5] fill between [of=J and H];
			
		\end{axis}
	\end{tikzpicture}
	
	\caption{Convex base diagrams corresponding to the restricted Lagrangian fibration related with $ S^*L^3_p(q)) $}
	\label{fig:cbdlens}
\end{figure}

With the same notations as in Section \ref{affine}, let $S_1$ be the image of the singular fibres corresponding to the case when $0\in\gamma(r)$, and $S_2$ be the image of the singular fibres corresponding to the case when $\epsilon \in \gamma(r)$. The cones $C_i$ for $i=1,2$, are defined with $v_1=v_2=(0,0,1)$. Let
\[ D_{1,0} = \{ (\lambda_1,\lambda_2,\lambda_3) \in \text{Im}(\pi) | \lambda_1>0 \}, \]   
\[ D_{1,1} = \{ (\lambda_1,\lambda_2,\lambda_3) \in \text{Im}(\pi) | \lambda_1<0\}, \] 
\[ D_{2,0} = \{ (\lambda_1,\lambda_2,\lambda_3) \in \text{Im}(\pi) | q\lambda_1+p\lambda_2>0 \}, \] 
\[ D_{2,1} = \{ (\lambda_1,\lambda_2,\lambda_3) \in \text{Im}(\pi) | q\lambda_1+p\lambda_2<0\}, \] 
and define $\mathfrak{b}_{i,j}$ as in Section \ref{affine}.\\

The topological monodromies $ M_{\mathfrak{b}_{i,j}}$ around the circles $ \mathfrak{b}_{i,j} $ are:

\[ M_{\mathfrak{b}_{1,1}}=\begin{pmatrix}
	1 & 0 & 1 \\
	0 & 1 & 0 \\
	0 & 0 & 1
\end{pmatrix},   M_{\mathfrak{b}_{2,1}} = \begin{pmatrix}
	1 & 0 & q \\
	0 & 1 & p \\
	0 & 0 & 1
\end{pmatrix}. \]
respectively.\\

The corresponding affine monodromies are:
\[ M_{\mathfrak{b}_{1,1}}^{\text{af}}= \begin{pmatrix}
			1 & 0 & 0 \\
			0 & 1 & 0 \\
			-1 & 0 & 1
		\end{pmatrix}, M_{\mathfrak{b}_{2,1}}^{\text{af}}= \begin{pmatrix}
			1 & 0 & 0 \\
			0 & 1 & 0 \\
			-q & -p & 1
		\end{pmatrix}.\]

The result of applying two transferring the cut operations (Definition \ref{tcut}) is obtained by following the steps:

\begin{enumerate}
	\item Apply the matrix $  M_{\mathfrak{b}_{1,1}}^{\text{af}} $ to $ \overline{D_{1,1}} $. 
	\item Apply the matrix $  M_{\mathfrak{b}_{2,1}}^{\text{af}} $ to the image of $ \overline{D_{2,1}} $ after step 1.
\end{enumerate}
The resulting almost toric base diagram is presented at the right of Figure \ref{fig:cbdlens}.

The corresponding potential function is:

\[ \mathfrak{PO}(z_1,z_2,z_3)= z_3(1+z_1)(1+z_1^qz_2^p). \]

Using Lemma \ref{critpoint}, we conclude that this potential has critical points for $K^*= \Lambda^*, \mathbb{C}^*,$ or $ \mathrm{U}(1)$, where $z_1=-1$, $z_2$ equals to one of the $p$ roots of $(-1)^{q+1}$, and $z_3$ is free in $K^*$.

\begin{rmk}
	We note that a Lagrangian Lens space $L^3_p(q)$ appears as the union of the $T^2$-orbits over a segment connecting the singularities at $0$ and $\epsilon$, for which $\lambda_1=\lambda_2=0$. The smoothing $Y_{\widetilde{\sigma},\epsilon} \cong T^*L^3_p(q)$.
\end{rmk}

\subsection{Cone over the cubic}\label{conecubic}

Consider $Q_3:= \text{Conv} \{(0,0),(3,0),(0,3)\} \subseteq \mathbb{Z}^2$ and its Minkowski decomposition:

\[ \vcenter{\hbox{\begin{tikzpicture}
			\draw[fill=gray!25] (0, 0)  -- (3,0) -- (0,3) -- cycle;
			\path (1,1) -- (0,0) node[pos=1.2]{$ (0,0) $};
			\path (1,1) -- (3,0) node[pos=1.2]{$ (3,0) $};
			\path (1,1) -- (0,3) node[pos=1.2]{$ (0,3) $};
\end{tikzpicture}}} = \vcenter{\hbox{\begin{tikzpicture}
			\draw[fill=gray!25] (0, 0)  -- (1,0) -- (0,1) -- cycle;
			\path (0,1) -- (0,0) node[pos=1.3]{$ (0,0) $};
			\path (1,1) -- (1,0) node[pos=1.3]{$ (1,0) $};
			\path (0,0) -- (0,1) node[pos=1.2]{$ (0,1) $};
\end{tikzpicture}}} + \vcenter{\hbox{\begin{tikzpicture}
			\draw[fill=gray!25] (0, 0)  -- (1,0) -- (0,1) -- cycle;
			\path (0,1) -- (0,0) node[pos=1.3]{$ (0,0) $};
			\path (1,1) -- (1,0) node[pos=1.3]{$ (1,0) $};
			\path (0,0) -- (0,1) node[pos=1.2]{$ (0,1) $};
\end{tikzpicture}}} + \vcenter{\hbox{\begin{tikzpicture}
			\draw[fill=gray!25] (0, 0)  -- (1,0) -- (0,1) -- cycle;
			\path (0,1) -- (0,0) node[pos=1.3]{$ (0,0) $};
			\path (1,1) -- (1,0) node[pos=1.3]{$ (1,0) $};
			\path (0,0) -- (0,1) node[pos=1.2]{$ (0,1) $};
\end{tikzpicture}}}\]
We write $Q_3=M_1+M_2+M_3$, where $M_1=M_2=M_3=\text{Conv}\{(0,0),(1,0),(0,1)\}$. This Minkowski decomposition is admissible. Let $ Y_{\sigma} $ the affine variety related to the cone $ \sigma:=C(Q_3) $ and $ Y_{\widetilde{\sigma},\epsilon} $ be the smoothing of $ Y_{\sigma} $. The ideal $ I(Y_{\widetilde{\sigma}},\epsilon)\subseteq \mathbb{C}[x_{1},x_{2},y,t] $ is generated by:

\[ x_{1}x_{2}y=t(t-\epsilon_1)(t-\epsilon_2). \]

   We obtain a complex fibration, given by the projection to the $ t $ character, with three singular fibres modeled as $ x_{1}x_{2}y=0 \subset \mathbb{C}^3_{(x_{1},x_{2},y)}  $ over $ t=0,\epsilon_1,\epsilon_2 $. We build our Lagrangian fibration with a convex base diagram as in the left picture of Figure \ref{fig:cbdQ3}. After transferring the cut operations, we get the right picture of Figure \ref{fig:cbdQ3}.

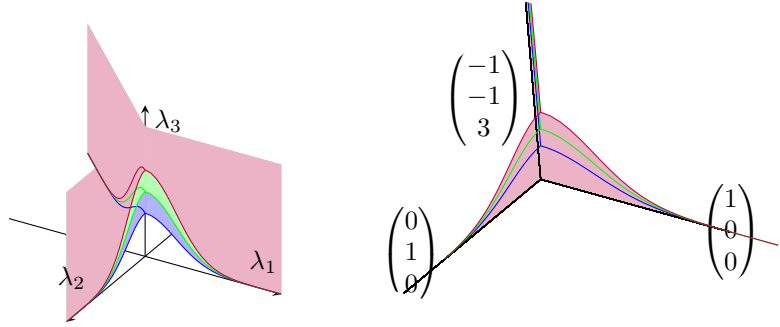
\begin{figure}[h!] 
	
	\centering
	\begin{tikzpicture}
		\begin{axis}
			[scale=0.6,
			axis x line=center,
			axis y line=center,
			axis z line=middle,
			zmin=0.0,
			zmax=3.5,
			ticks=none,
			enlargelimits=false,
			view={-30}{-45},
			xlabel = \(\lambda_1\),
			ylabel = {\(\lambda_2\)},
			zlabel = {\(\lambda_3\)}
			]
			\addplot3[name path=A,color=blue,domain=0:10,samples y=1] (-x,-x, {exp(-x^2/10)});
			\addplot3[name path=C,color=blue,domain=0:10,samples y=1] (0,x, {exp(-x^2/10)});
			\addplot3[name path=D,color=blue,domain=0:10,samples y=1] (x,0, {exp(-x^2/10)});
			\addplot3[name path=B,variable=t,smooth,draw=none,domain=0:10]  (-t,-t, 3);
			\addplot3[name path=E,variable=t,smooth,draw=none,domain=0:10]  (0,t, 3);
			\addplot3[name path=F,variable=t,smooth,draw=none,domain=0:10]  (t,0, 3);
			\addplot3 [blue!30,fill opacity=0.5] fill between [of=A and B];
			\addplot3 [blue!30,fill opacity=0.5] fill between [of=C and E];
			\addplot3 [blue!30,fill opacity=0.5] fill between [of=D and F];

			\addplot3[name path=A,color=green,domain=0:10,samples y=1] (-x,-x, {1.5*exp(-x^2/10)});
			\addplot3[name path=C,color=green,domain=0:10,samples y=1] (0,x, {1.5*exp(-x^2/10)});
			\addplot3[name path=D,color=green,domain=0:10,samples y=1] (x,0, {1.5*exp(-x^2/10)});
			\addplot3[name path=B,variable=t,smooth,draw=none,domain=0:10]  (-t,-t, 3);
			\addplot3[name path=E,variable=t,smooth,draw=none,domain=0:10]  (0,t, 3);
			\addplot3[name path=F,variable=t,smooth,draw=none,domain=0:10]  (t,0, 3);
			
			\addplot3 [green!30,fill opacity=0.5] fill between [of=A and B];
			\addplot3 [green!30,fill opacity=0.5] fill between [of=C and E];
			\addplot3 [green!30,fill opacity=0.5] fill between [of=D and F];
			
			\addplot3[name path=A,color=purple,domain=0:10,samples y=1] (-x,-x, {2*exp(-x^2/10)});
			\addplot3[name path=C,color=purple,domain=0:10,samples y=1] (0,x, {2*exp(-x^2/10)});
			\addplot3[name path=D,color=purple,domain=0:10,samples y=1] (x,0, {2*exp(-x^2/10)});
			\addplot3[name path=B,variable=t,smooth,draw=none,domain=0:10]  (-t,-t, 3);
			\addplot3[name path=E,variable=t,smooth,draw=none,domain=0:10]  (0,t, 3);
			\addplot3[name path=F,variable=t,smooth,draw=none,domain=0:10]  (t,0, 3);
			
			\addplot3 [purple!30,fill opacity=0.5] fill between [of=A and B];
			\addplot3 [purple!30,fill opacity=0.5] fill between [of=C and E];
			\addplot3 [purple!30,fill opacity=0.5] fill between [of=D and F];

		\end{axis}
		
	\end{tikzpicture}
	\begin{tikzpicture}
		\begin{axis}
			[scale=0.6,
			hide axis,
			zmin=0.0,
			zmax=4.5,
			ticks=none,
			enlargelimits=false,
			view={-30}{-45}
			]
			\addplot3[name path=A,color=blue,domain=0:10,samples y=1] (0,x, {exp(-x^2/10)});
			\addplot3[name path=B,color=blue,domain=0:7,samples y=1] (x,0, {exp(-x^2/10)});
			\addplot3[name path=C,color=blue,domain=0:7,samples y=1] (-x,-x, {exp(-x^2/10)+3*x});
			\addplot3[name path=D,variable=t,domain=0:10]  (0,t, 0) node [left,pos=0.095] {$ \begin{pmatrix}
					0\\1\\0
				\end{pmatrix} $};
			\addplot3[name path=E,variable=t,domain=0:8]  (t,0, 0) node [pos=1] {$ \begin{pmatrix}
					1\\0\\0
				\end{pmatrix} $};
			\addplot3[name path=F,variable=t,domain=0:7]  (-t,-t, 3*t) node [left, pos=0.002] {$ \begin{pmatrix}
					-1\\-1\\3
				\end{pmatrix} $};
			\addplot3 [blue!30,fill opacity=0.5] fill between [of=A and D];
			\addplot3 [blue!30,fill opacity=0.5] fill between [of=B and E];
			\addplot3 [blue!30,fill opacity=0.5] fill between [of=C and F];
			
			\addplot3[name path=G,color=green,domain=0:10,samples y=1] (x,0, {1.5*exp(-x^2/10)});
			\addplot3[name path=H,color=green,domain=0:7,samples y=1] (0,x, {1.5*exp(-x^2/10)});
			\addplot3[name path=I,color=green,domain=0:7,samples y=1] (-x,-x, {1.5*exp(-x^2/10)+3*x});
			
			\addplot3[name path=D,variable=t,smooth,draw=none,domain=0:10]  (t,0, 0);
			\addplot3[name path=E,variable=t,smooth,draw=none,domain=0:10]  (0,t, 0);
			\addplot3[name path=F,variable=t,smooth,draw=none,domain=0:10]  (-t,-t, 3*t);
			
			\addplot3 [green!30,fill opacity=0.5] fill between [of=G and D];
			\addplot3 [green!30,fill opacity=0.5] fill between [of=H and E];
			\addplot3 [green!30,fill opacity=0.5] fill between [of=I and F];
			
			\addplot3[name path=J,color=purple,domain=0:10,samples y=1] (x,0, {2*exp(-x^2/10)});
			\addplot3[name path=K,color=purple,domain=0:7,samples y=1] (0,x, {2*exp(-x^2/10)});
			\addplot3[name path=L,color=purple,domain=0:7,samples y=1] (-x,-x, {2*exp(-x^2/10)+3*x});
			
			\addplot3 [purple!30,fill opacity=0.5] fill between [of=J and D];
			\addplot3 [purple!30,fill opacity=0.5] fill between [of=K and E];
			\addplot3 [purple!30,fill opacity=0.5] fill between [of=L and F];
			
		\end{axis}
	\end{tikzpicture}
	
	\caption{Convex base diagrams corresponding to the restricted Lagrangian fibration related to Minkowski decomposition of $Q_3$.}
	\label{fig:cbdQ3}
\end{figure}

As we proved in Section \ref{thm}, this cone coincides with:
\[ \sigma^\vee =\text{Cone}\left\{  \begin{pmatrix}
	-1\\-1\\3
\end{pmatrix},
\begin{pmatrix}
	0\\0\\1
\end{pmatrix},
\begin{pmatrix}
	0\\1\\0
\end{pmatrix},
\begin{pmatrix}
	1\\0\\0
\end{pmatrix} \right\}. \]
After applying the matrix 
\[ \mathscr{M}=\begin{pmatrix}
	1 & 0 & 0 \\
	0 & 1 & 0 \\
	1 & 1 & 1
	
\end{pmatrix} \in SL(3,\mathbb{Z})\]
to $ \sigma^\vee $, we obtain:
\[ \mathscr{M}\sigma^\vee =\text{Cone}\left\{  \begin{pmatrix}
	-1\\-1\\1
\end{pmatrix},
\begin{pmatrix}
	0\\0\\1
\end{pmatrix},
\begin{pmatrix}
	0\\1\\1
\end{pmatrix},
\begin{pmatrix}
	1\\0\\1
\end{pmatrix} \right\}. \]
In this new cone, we have $ Q_3^\vee $ at height 1. 

\begin{figure}[h!]
	\centering
	\begin{tikzpicture}
		\draw[fill=gray!25] (-1,-1) --(0,1)-- (1,0) -- cycle;
		\path (0,0) -- (-1,-1) node[pos=1.3]{$ (-1,-1) $};
		\path (0,0) -- (0,1) node[pos=1.3]{$ (0,1) $};
		\path (0,0) -- (1,0) node[pos=1.5]{$ (1,0) $};
	\end{tikzpicture}
	\caption{$Q_3^\vee$}
\end{figure}

By \eqref{potential}, the potential function is given by: 

\begin{equation}\label{cubic}
	\mathfrak{PO}=z_{3}(1+z_1+z_2)^3,
\end{equation}
and using Lemma \ref{critpoint}, we get that the critical locus has $\dim_{\mathbb{C}}=2$ when $K^*= \Lambda^*$ or $\mathbb{C}^*$, and a 2-$\dim_{\mathbb{R}}$ family of critical points, when $K^*=\mathrm{U}(1)$.

\subsection{Example where $\mathfrak{PO}$ doesn't have critical points}

\[ \vcenter{\hbox{\begin{tikzpicture}
			\draw[fill=gray!25] (0, 0)  -- (0,1) -- (1,1) -- (2,0)-- cycle;
			\path (1,1) -- (0,0) node[pos=1.2]{$ (0,0) $};
			\path (1,0) -- (0,1) node[pos=1.2]{$ (0,1) $};
			\path (0,0) -- (1,1) node[pos=1.2]{$ (1,1) $};
			\path (1,1) -- (2,0) node[pos=1.2]{$ (2,0) $};
\end{tikzpicture}}} = \vcenter{\hbox{\begin{tikzpicture}
			\draw[fill=gray!25] (0, 0)  -- (1,0) -- (0,1) -- cycle;
			\path (1,1) -- (0,0) node[pos=1.2]{$ (0,0) $};
			\path (0,1) -- (1,0) node[pos=1.2]{$ (1,0) $};
			\path (1,0) -- (0,1) node[pos=1.2]{$ (0,1) $};
\end{tikzpicture}}} + \vcenter{\hbox{\begin{tikzpicture}
			\draw[-] (0, 0)  -- (1,0);
			\path (1,1) -- (0,0) node[pos=1.3]{$ (0,0) $};
			\path (1,1) -- (1,0) node[pos=1.3]{$ (1,0) $};
\end{tikzpicture}}} \]

Consider $M_1= \text{Conv} \{(0,0),(0,1),(1,0)\}$, $M_2=\text{Conv} \{ (0,0),(1,0) \}$, and $Q=M_1+M_2$. This Minkowski decomposition is admissible. Let $ Y_{\sigma} $ the affine variety related to the cone $ \sigma:=C(Q) $ and $ Y_{\widetilde{\sigma},\epsilon} $ be the smoothing of $ Y_{\sigma} $. The ideal $ I(Y_{\widetilde{\sigma}},\epsilon)\subseteq \mathbb{C}[x_{1,1},x_{1,2},y_1,y_2,z,t] $ is generated by:

\begin{equation*} \label{eqexmpnot}
	\begin{aligned}
		&x_{1,1}y_1- tz  &\qquad &x_{1,1}y_2-t(t-\epsilon)  &\qquad  &x_{1,2}z  - (t-\epsilon)\\
		&x_{1,2}y_1-y_2  &\qquad  &y_1(t-\epsilon)-zy_2 &\qquad  &
	\end{aligned}
\end{equation*}

   We obtain a complex fibration, given by the projection to the $ t $ character, with two singular fibres modeled as $ x_{1,1}x_{1,2}y_1=0 \subset \mathbb{C}^3_{(x_{1,1},x_{1,2},y_1)}  $ over $ t=0 $, and $ x_{1,1}y_2=0 \subset \mathbb{C}^2_{(x_{1,1},y_2)} \times \mathbb{C}^*_{(x_{1,2})}  $ over $ t=\epsilon $. Out of that, we build our Lagrangian fibration with a convex base diagram as in the left picture of Figure \ref{fig:nondisplaceable} (analogous to the one depicted in Figure \ref{fig:cbdQ5} for our toy example). After transferring the cut operations, we get the right picture of Figure \ref{fig:nondisplaceable}.

\begin{figure}[h!]
	
	\centering
	\begin{tikzpicture}
		\begin{axis}
			[scale=0.6,
			axis x line=center,
			axis y line=center,
			axis z line=middle,
			zmin=0.0,
			zmax=3.5,
			ticks=none,
			enlargelimits=false,
			view={-30}{-45},
			xlabel = \(\lambda_1\),
			ylabel = {\(\lambda_2\)},
			zlabel = {\(\lambda_3\)}
			]
			\addplot3[name path=A,color=blue,domain=0:10,samples y=1] (x,0, {exp(-x^2/10)});
			\addplot3[name path=C,color=blue,domain=0:10,samples y=1] (0,x, {exp(-x^2/10)});
			\addplot3[name path=D,color=blue,domain=0:10,samples y=1] (-x,-x, {exp(-x^2/10)});
			\addplot3[name path=B,variable=t,smooth,draw=none,domain=0:10]  (t,0, 3);
			\addplot3[name path=E,variable=t,smooth,draw=none,domain=0:10]  (0,t, 3);
			\addplot3[name path=F,variable=t,smooth,draw=none,domain=0:10]  (-t,-t, 3);
			\addplot3 [blue!30,fill opacity=0.5] fill between [of=A and B];
			\addplot3 [blue!30,fill opacity=0.5] fill between [of=C and E];
			\addplot3 [blue!30,fill opacity=0.5] fill between [of=D and F];
			
			\addplot3[name path=G,color=green,domain=-10:10,samples y=1] (0,x, {1.5*exp(-x^2/10)});
			\addplot3[name path=H,variable=t,smooth,draw=none,domain=-10:10]  (0,t, 3);
			\addplot3 [green!30,fill opacity=0.5] fill between [of=G and H];
			
		\end{axis}
		
	\end{tikzpicture}
	\begin{tikzpicture}
		\begin{axis}
			[scale=0.6,
			hide axis,
			zmin=0.0,
			zmax=4.5,
			ticks=none,
			enlargelimits=false,
			view={-30}{-45}
			]
			\addplot3[name path=A,color=blue,domain=0:10,samples y=1] (x,0, {exp(-x^2/10)});
			\addplot3[name path=C,color=blue,domain=0:10,samples y=1] (0,x, {exp(-x^2/10)});
			\addplot3[name path=D,color=blue,domain=0:7,samples y=1] (-x,-x, {exp(-x^2/10)+2*x});
			\addplot3[name path=B,variable=t,domain=0:10]  (t,0, 0) node [right, pos=0.990] {$ \begin{pmatrix}
					1\\0\\0
				\end{pmatrix} $};
			\addplot3[name path=E,variable=t,domain=0:10]  (0,t, 0) node [left, pos=0.990] {$ \begin{pmatrix}
					0\\1\\0
				\end{pmatrix} $};
			\addplot3[name path=F,variable=t,domain=0:7]  (-t,-t, 2*t) node [left, pos=0.003] {$ \begin{pmatrix}
					-1\\-1\\2
				\end{pmatrix} $};
			\addplot3 [blue!30,fill opacity=0.5] fill between [of=A and B];
			\addplot3 [blue!30,fill opacity=0.5] fill between [of=C and E];
			\addplot3 [blue!30,fill opacity=0.5] fill between [of=D and F];
			
			\addplot3[name path=G,color=green,domain=0:10,samples y=1] (0,x, {1.5*exp(-x^2/10)});
			\addplot3[name path=J,color=green,domain=0:10,samples y=1] (0,-x, {1.5*exp(-x^2/10)+x});
			\addplot3[name path=H,variable=t,domain=0:10]  (0, t, 0);
			\addplot3[name path=I,variable=t,domain=0:10]  (0,-t, t) node [right, pos=0.0055] {$ \begin{pmatrix}
					0\\-1\\1
				\end{pmatrix} $};
			\addplot3 [green!30,fill opacity=0.5] fill between [of=G and I];
			\addplot3 [green!30,fill opacity=0.5] fill between [of=J and H];
			
		\end{axis}
	\end{tikzpicture}
	\caption{Convex base diagrams corresponding to the restricted Lagrangian fibration related to the polytope given by $\text{Conv}\{(0,0),(0,1),(1,1),(2,0)\} $. }
	\label{fig:nondisplaceable}
\end{figure}
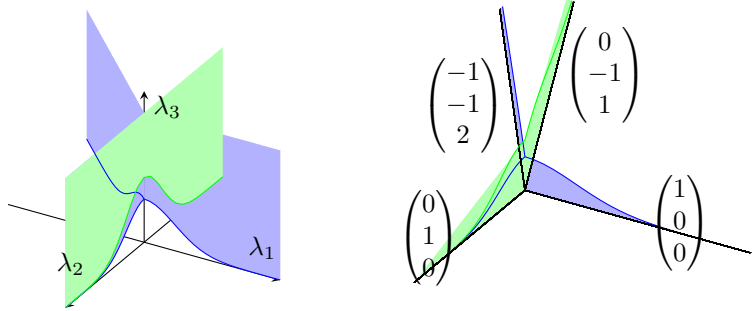

By \eqref{potential}, the potential function is given by: 

\begin{equation}\label{displa}
\mathfrak{PO}=z_{3}(1+z_1+z_2)(1+z_1),
\end{equation}

and using Lemma \ref{critpoint}, we get that \eqref{displa} does not have critical points. 
\begin{rmk}
	In this case, we can not use McDuff's method of probes \cite{mcduff2011displacing} to displace the monotone fibres over the line $\{(0,0,l) | l \in \mathbb{R}_{\geq 0} \}$. So it remains unclear whether the monotone fibres in this example are displaceable.
\end{rmk}

\section{Compactification} \label{secompact}
In this section, we will discuss the existence of a compactification of $Y_{\widetilde{\sigma},\epsilon}$ that is compatible with our construction. Under the assumption of $Q$ giving rise to a smooth toric projective Fano variety $F_Q$, i.e., being the convex hull of the generators of its fan, we get a compactification of $Y_{\widetilde{\sigma},\epsilon}$ as a complete intersection of hyperplanes on a projective variety $X_{\mathscr{A}_{\mathscr{H}}}$. Moreover, this is well adapted with the complex fibration constructed in Theorem \ref{fibration}, in the sense that we get a `pencil' in $\overline{Y_{\widetilde{\sigma},\epsilon}}$, well defined in $\overline{Y_{\widetilde{\sigma},\epsilon}} \setminus D \to \mathbb{P}^1$ where $Y_{\widetilde{\sigma},\epsilon}= \overline{Y_{\widetilde{\sigma},\epsilon}} \setminus \overline{F_{\infty}}$, $\overline{F_{\infty}}=F_{\infty}\cup D$ is the compactification of the fibre at infinity and is the toric projective manifold associated with the polytope $Q$. This pencil extends the fibration $Y_{\widetilde{\sigma},\epsilon}\to \mathbb{C}$ constructed in Theorem \ref{fibration}.\\

Symplectically, this corresponds to an infinite rescaling of the base diagram where the image is shrunk to height one, and the divisor $Q^\vee$ appears at height one. In particular, the singular fibres on $Y_{\widetilde{\sigma},\epsilon}$ are asymptotic to height one.\\

\begin{exmp}
	
	Continuing with the cone over the cubic presented in Section \ref{conecubic}, we discuss its compactification. In Figure \ref{fig:compactQ3}, we show the base diagram of the algebraic compactification. 
	
	\begin{figure}[h!] 
	\centering
	\begin{tikzpicture}
		\begin{axis}
			[scale=0.6,
			hide axis,
			zmin=0.0,
			ticks=none,
			enlargelimits=false,
			view={10}{25}
			]
			\addplot3[name path=A,color=blue,domain=0:7,samples y=1] (0,x, {exp(-x^2/10)+x});
			\addplot3[name path=B,color=blue,domain=0:7,samples y=1] (x,0, {exp(-x^2/10)+x});
			\addplot3[name path=C,color=blue,domain=0:7,samples y=1] (-x,-x, {exp(-x^2/10)+x});
			\addplot3[name path=D,variable=t,domain=0:7]  (0,t, t);
			\addplot3[name path=E,variable=t,domain=0:7]  (t,0, t);
			\addplot3[name path=F,variable=t,domain=0:7]  (-t,-t, t);
			\addplot3 [blue!30,fill opacity=0.5] fill between [of=A and D];
			\addplot3 [blue!30,fill opacity=0.5] fill between [of=B and E];
			\addplot3 [blue!30,fill opacity=0.5] fill between [of=C and F];
			
			\addplot3[name path=G,color=green,domain=0:7,samples y=1] (x,0, {1.5*exp(-x^2/10)+x});
			\addplot3[name path=H,color=green,domain=0:7,samples y=1] (0,x, {1.5*exp(-x^2/10)+x});
			\addplot3[name path=I,color=green,domain=0:7,samples y=1] (-x,-x, {1.5*exp(-x^2/10)+x});
			
			\addplot3[name path=D,variable=t,smooth,draw=none,domain=0:7]  (t,0, t);
			\addplot3[name path=E,variable=t,smooth,draw=none,domain=0:7]  (0,t, t);
			\addplot3[name path=F,variable=t,smooth,draw=none,domain=0:7]  (-t,-t, t);
			
			\addplot3 [green!30,fill opacity=0.5] fill between [of=G and D];
			\addplot3 [green!30,fill opacity=0.5] fill between [of=H and E];
			\addplot3 [green!30,fill opacity=0.5] fill between [of=I and F];
			
			\addplot3[name path=J,color=purple,domain=0:7,samples y=1] (x,0, {2*exp(-x^2/10)+x});
			\addplot3[name path=K,color=purple,domain=0:7,samples y=1] (0,x, {2*exp(-x^2/10)+x});
			\addplot3[name path=L,color=purple,domain=0:7,samples y=1] (-x,-x, {2*exp(-x^2/10)+x});
			
			\addplot3 [purple!30,fill opacity=0.5] fill between [of=J and D];
			\addplot3 [purple!30,fill opacity=0.5] fill between [of=K and E];
			\addplot3 [purple!30,fill opacity=0.5] fill between [of=L and F];
			
			\addplot3[name path=D,variable=t,domain=0:7]  (t,7-t, 7);
			\addplot3[name path=E,variable=t,domain=0:7]  (7-2*t,-t, 7);
			\addplot3[name path=F,variable=t,domain=0:7]  (-7+t,-7+2*t, 7);
			
		\end{axis}
	\end{tikzpicture}
	
	\caption{Compactifications of the cubic.}
	\label{fig:compactQ3}
\end{figure}
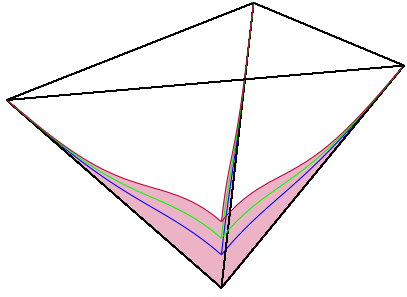
	
\end{exmp}

Let us now prove the following result that ensures the algebraic compactification when $Y_{\widetilde{\sigma}}=Y_{\mathscr{A}_{\mathscr{H}}} $ is the affine cone of the projective variety $ X_{\mathscr{A}_{\mathscr{H}}} $ (see Proposition \ref{hom}).

\begin{thm} \label{comp1}
	With the assumptions given in Theorem \ref{fibration}, suppose that $ Q=M_1+\dots+M_k $ with $ k\geq 2 $, and that $ Y_{\widetilde{\sigma}}=Y_{\mathscr{A}_{\mathscr{H}}} $ is the affine cone of the projective variety $ X_{\mathscr{A}_{\mathscr{H}}} $. Then, the compactification of $ Y_{\sigma,\epsilon} $ in $\mathbb{CP}^{|\mathscr{A}_{\mathscr{H}}|}$ is isomorphic to the intersection of $ k-2 $ different hyperplanes in $ X_{\mathscr{A}_{\mathscr{H}}} $. 
\end{thm}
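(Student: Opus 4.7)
The plan is to realize the projective closure $\overline{Y_{\widetilde{\sigma},\epsilon}} \subset \mathbb{CP}^{|\mathscr{A}_{\mathscr{H}}|}$ as the graph of an explicit linear morphism over a complete intersection of $k-2$ hyperplane sections of $X_{\mathscr{A}_{\mathscr{H}}}$, using the linear projection that forgets the extra homogeneous coordinate.

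Recall from the proof of Theorem \ref{fibration} that $Y_{\widetilde{\sigma},\epsilon}=\Psi^{-1}(\epsilon)$ is cut out of $Y_{\widetilde{\sigma}}\subset\mathbb{C}^{|\mathscr{A}_{\mathscr{H}}|}$ by the $k-1$ affine linear relations $t_1-t_i=\epsilon_i$, $i=2,\ldots,k$. The hypothesis that $Y_{\widetilde{\sigma}}=Y_{\mathscr{A}_{\mathscr{H}}}$ is the affine cone of $X_{\mathscr{A}_{\mathscr{H}}}$ is equivalent, by Proposition \ref{hom}, to the ideal $I(Y_{\widetilde{\sigma}})$ being homogeneous under the standard grading (each coordinate having degree one). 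So after embedding $\mathbb{C}^{|\mathscr{A}_{\mathscr{H}}|}\hookrightarrow\mathbb{CP}^{|\mathscr{A}_{\mathscr{H}}|}$ via a new homogeneous coordinate $T_0$, we obtain
\[
\overline{Y_{\widetilde{\sigma}}}=Y_{\widetilde{\sigma}}\sqcup X_{\mathscr{A}_{\mathscr{H}}},\qquad \overline{Y_{\widetilde{\sigma},\epsilon}}=\overline{Y_{\widetilde{\sigma}}}\cap\bigcap_{i=2}^{k}\{T_1-T_i-\epsilon_iT_0=0\}.
\]

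Next, introduce the $k-2$ linear forms $F_i:=\epsilon_2(T_1-T_i)-\epsilon_i(T_1-T_2)$ for $i=3,\ldots,k$, each depending only on $T_1,\ldots,T_{|\mathscr{A}_{\mathscr{H}}|}$, and set $Z:=X_{\mathscr{A}_{\mathscr{H}}}\cap\{F_3=0\}\cap\cdots\cap\{F_k=0\}\subset\mathbb{CP}^{|\mathscr{A}_{\mathscr{H}}|-1}$, the intersection of $k-2$ hyperplanes in $X_{\mathscr{A}_{\mathscr{H}}}$. I claim the linear projection $\pi:\mathbb{CP}^{|\mathscr{A}_{\mathscr{H}}|}\dashrightarrow\mathbb{CP}^{|\mathscr{A}_{\mathscr{H}}|-1}$ forgetting $T_0$ restricts to an isomorphism $\overline{Y_{\widetilde{\sigma},\epsilon}}\xrightarrow{\sim}Z$. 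Its indeterminacy point $[1:0:\cdots:0]$ does not lie in $\overline{Y_{\widetilde{\sigma},\epsilon}}$, since the defining equations would force $\epsilon_iT_0=0$ for all $i$ while some $\epsilon_i\neq 0$; eliminating $T_0$ between the pair $T_1-T_i-\epsilon_iT_0=0$ and $T_1-T_2-\epsilon_2T_0=0$ yields exactly $F_i=0$, so $\pi(\overline{Y_{\widetilde{\sigma},\epsilon}})\subseteq Z$. Conversely, define $s:Z\to\mathbb{CP}^{|\mathscr{A}_{\mathscr{H}}|}$ by $s([T_1:\cdots:T_{|\mathscr{A}_{\mathscr{H}}|}])=[(T_1-T_2)/\epsilon_2:T_1:\cdots:T_{|\mathscr{A}_{\mathscr{H}}|}]$; its components are linear forms with empty common vanishing locus, so $s$ is a regular morphism, it maps $Z$ into $\overline{Y_{\widetilde{\sigma},\epsilon}}$ (since the $F_i$ relations are equivalent to $(T_1-T_i)/\epsilon_i=(T_1-T_2)/\epsilon_2$, forcing $T_1-T_i=\epsilon_iT_0$ with $T_0:=(T_1-T_2)/\epsilon_2$), and evidently $\pi\circ s=\mathrm{id}_Z$ and $s\circ\pi=\mathrm{id}_{\overline{Y_{\widetilde{\sigma},\epsilon}}}$.

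The principal subtlety to verify is that $s$ is regular along the boundary stratum $Z\cap\{T_1=T_2\}$, where the intended $T_0$-component $(T_1-T_2)/\epsilon_2$ vanishes; this is automatic projectively as soon as $X_{\mathscr{A}_{\mathscr{H}}}$ is not contained in the linear subspace $\{T_1=\cdots=T_k=0\}$, which follows from the construction of $\mathscr{A}_{\mathscr{H}}$ (it contains the vectors $(\mathbf{0},e_i)$ corresponding to the $T_i$). One also checks the $F_i$ impose $k-2$ genuinely independent conditions, a consequence of the $\epsilon_i$ being pairwise distinct. The boundary case $k=2$ specialises to $Z=X_{\mathscr{A}_{\mathscr{H}}}$, consistent with the single equation $T_1-T_2=\epsilon_2T_0$ merely determining $T_0$ in terms of $T_1,T_2$ and so effecting an isomorphism with the full $X_{\mathscr{A}_{\mathscr{H}}}$.
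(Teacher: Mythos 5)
Your argument is correct and follows essentially the same route as the paper: you homogenize the $k-1$ affine-linear constraints with a new coordinate $T_0$, eliminate $T_0$ from all but one pair of them to produce $k-2$ hyperplane sections of $X_{\mathscr{A}_{\mathscr{H}}}$, and exhibit an explicit linear isomorphism whose inverse is $[T_1:\cdots]\mapsto[(T_1-T_2)/\epsilon_2:T_1:\cdots]$, which is exactly the map $F$ in the paper's proof. The extra checks you include (the indeterminacy locus of the projection misses the closure, the hyperplanes are independent since the $\epsilon_i$ are pairwise distinct, and the $k=2$ degenerate case) are useful but not a different method; your remark about regularity of $s$ along $\{T_1=T_2\}$ is in fact vacuous, since the components of $s$ include all of $T_1,\ldots,T_{|\mathscr{A}_{\mathscr{H}}|}$ and hence have empty common zero locus in $\mathbb{CP}^{|\mathscr{A}_{\mathscr{H}}|-1}$ regardless of $X_{\mathscr{A}_{\mathscr{H}}}$.
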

\begin{proof}
	The compactification of  $ Y_{\widetilde{\sigma},\epsilon} $ is the projective variety $ \overline{Y_{\widetilde{\sigma},\epsilon}} = V(I^h(Y_{\widetilde{\sigma},\epsilon})) $, where $ I^h(Y_{\widetilde{\sigma},\epsilon}) $ is the homogenization of the ideal of $ Y_{\widetilde{\sigma},\epsilon} $. Recall that
	\[ Y_{\widetilde{\sigma},\epsilon}= V(I(Y_{\mathscr{A}_{\mathscr{H}}})) \cap \left( \bigcap_{i=1}^{k-1} V(t_1-t_{i+1}-\epsilon_i) \right) \subseteq \mathbb{C}^{|\mathscr{A}_{\mathscr{H}}|}_{(\dots,t_1,\dots,t_k)} ,\] 
	then 
	\[ \overline{Y_{\widetilde{\sigma},\epsilon}} = V(I(Y_{\mathscr{A}_{\mathscr{H}}})) \cap \left( \bigcap_{i=1}^{k-1} V(t_1-t_{i+1}-\epsilon_iz) \right) \subseteq \mathbb{CP}^{|\mathscr{A}_{\mathscr{H}}|}_{(\dots,t_1,\dots,t_k,z)}.  \] since by Proposition \ref{hom} the ideal $ I(Y_{\mathscr{A}_{\mathscr{H}}}) $ is homogeneous, and $ t_1-t_{i+1}-\epsilon_iz $ is the homogenization of $ t_1-t_{i+1}-\epsilon_i$.\\
	
	Finally, we have the isomorphism: 
	\begin{alignat*}{3}
		F:X_{\mathscr{A}_{\mathscr{H}}} \cap \left( \bigcap_{i=2}^{k-1} V\left(t_1-t_{i+1}-\epsilon_i\left(\frac{t_1-t_2}{\epsilon_1}\right)\right) \right)&\longrightarrow \overline{Y_{\widetilde{\sigma},\epsilon}}\\
		[\dots:t_1:\dots:t_k]&\longmapsto\left[\dots:t_1:\dots:t_k:\frac{t_1-t_2}{\epsilon_1}\right]
	\end{alignat*}.
	
\end{proof}

In this setting, we can consider the complex fibration  
\begin{alignat*}{3}
	f:\overline{Y_{\widetilde{\sigma},\epsilon}}&\longrightarrow \mathbb{CP}^1 \\
	[\dots:t_1:\dots:t_k:z]&\longmapsto [t_1:z].
\end{alignat*}

Note that for $ z\neq 0 $, it coincides with the complex fibration we constructed before, and that $ f^{-1}([1:0])= X_{\sigma} \setminus V(t_1) $.\\

Our toy example in section \ref{Q5} satisfies the conditions of Theorem \ref{comp1}. By examining the equations, we conclude that $ \overline{Y_{\widetilde{\sigma},\epsilon}} $ is equal to the blow-up of $ \mathbb{CP}^3 $ in one point.\\

In the example discussed in Section \ref{firstdecomp}, $Y_{\epsilon,\widetilde{\sigma}}$ compactifies to $\mathbb{P}^1\times \mathbb{P}^1 \times \mathbb{P}^1$ (See Figure \ref{fig:compactQ61d}). Indeed, one recovers $Y_{\epsilon,\widetilde{\sigma}}$ by deleting a smooth fibre of the "pencil" 
\begin{alignat*}{3}
	(\mathbb{P}^1)^3 & \to \mathbb{P}^1 \\
	([x_0:x_1],[y_0:y_1],[z_0,z_1]) & \mapsto [x_1y_1z_1:x_0y_0z_0].
\end{alignat*}  
Here, the singular fibres are at $0$ and $\infty$ instead of $0$ and $\epsilon$. We see in \eqref{potentialQ62d} that
\[\mathfrak{PO}=z_3(1+z_1+z_2+z_1^2z_2+z_1z_2^2+z_1^2z_2^2+3z_1z_2)\]
where $3z_1z_2$ is the term corresponding to the center of the polytope $Q$. According to \cite{diogoprep}, the Lagrangians $\mathcal{L}_a$ viewed in the compactification $Y_{\epsilon,\widetilde{\sigma}} \subseteq (\mathbb{P}^1)^3$ are lifts of the monotone Lagrangian $\mathcal{L}'$ on the compactification of the divisor $F_{\infty}$, viewed as the added fibre at $\infty$ on the 'pencil' over the compactification, which is a toric manifold with moment polytope $Q^\vee$, i.e., $\text{Bl}_3 \mathbb{C}P^2$. Its potential is then given by $T^b\mathfrak{PO} +T^a(z_1z_2z_3)^{-1} $; where $(z_1z_2z_3)^{-1}$ corresponds to the fibre disk passing through the compactification of the fibre $F_\infty$, with area $a$. $(\mathfrak{PO}-3z_1z_2z_3)$ correspond the terms in the potential of $\mathcal{L}'$ and $3z_1z_2z_3$ is the extra term, where $3$ is reinterpreted in \cite{diogoprep} as the number of $c_1=1$ rational curves in $(\mathbb{P}^1)^3$ passing transversally through a point in the divisor $F_{\infty}$.

\begin{figure}[h!] 
	\centering
	\begin{tikzpicture}
		\begin{axis}
			[scale=0.6,
			hide axis,
			zmin=0.0,
			ticks=none,
			enlargelimits=false,
			view={10}{20}
			]
			\addplot3[name path=A,color=blue,domain=0:7,samples y=1] (0,x, {exp(-x^2/10)+x});
			\addplot3[name path=B,color=blue,domain=0:7,samples y=1] (-x,0, {exp(-x^2/10)+x});
			\addplot3[name path=C,color=blue,domain=0:7,samples y=1] (x,-x, {exp(-x^2/10)+x});
			
			\addplot3[name path=D,variable=t,domain=0:7, draw=none]  (0,t, t);
			\addplot3[dashed] coordinates {(0,0,0) (0,7,7)};
			
			\addplot3[name path=E,variable=t,domain=0:7]  (-t,0, t);
			\addplot3[name path=F,variable=t,domain=0:7]  (t,-t, t);
			\addplot3 [blue!30,fill opacity=0.5] fill between [of=A and D];
			\addplot3 [blue!30,fill opacity=0.5] fill between [of=B and E];
			\addplot3 [blue!30,fill opacity=0.5] fill between [of=C and F];
			
			\addplot3[name path=A,color=green,domain=0:7,samples y=1] (-x,x, {1.5*exp(-x^2/10)+x});
			\addplot3[name path=B,color=green,domain=0:7,samples y=1] (0,-x, {1.5*exp(-x^2/10)+x});
			\addplot3[name path=C,color=green,domain=0:7,samples y=1] (x,0, {1.5*exp(-x^2/10)+x});
			\addplot3[name path=D,variable=t,domain=0:7, draw=none]  (-t,t, t);
			\addplot3[dashed] coordinates {(0,0,0) (-7,7,7)};
			
			\addplot3[name path=E,variable=t,domain=0:7]  (0,-t, t);
			\addplot3[name path=F,variable=t,domain=0:7]  (t,0, t);
			\addplot3 [green!30,fill opacity=0.5] fill between [of=A and D];
			\addplot3 [green!30,fill opacity=0.5] fill between [of=B and E];
			\addplot3 [green!30,fill opacity=0.5] fill between [of=C and F];
			
			\addplot3[variable=t, domain=0:7]  (t,7-t, 7);
			\addplot3[variable=t, domain=0:7]  (-t,7, 7);
			\addplot3[variable=t, domain=0:7]  (-7,7-t, 7);
			\addplot3[variable=t, domain=0:7]  (-7+t,-t, 7);
			\addplot3[variable=t, domain=0:7]  (t,-7, 7);
			\addplot3[variable=t, domain=0:7]  (7,-7+t, 7);
			
		\end{axis}
	\end{tikzpicture}
	\begin{tikzpicture}
		\begin{axis}
			[scale=0.6,
			hide axis,
			zmin=0.0,
			ticks=none,
			enlargelimits=false,
			view={-50}{30}
			]
			\addplot3[name path=A,color=blue,domain=0:7,samples y=1] ({exp(-x^2/10)+x},{-exp(-x^2/10)}, {exp(-x^2/10)});
			\addplot3[name path=B,color=blue,domain=0:7,samples y=1] ({exp(-x^2/10)},{-exp(-x^2/10)-x}, {exp(-x^2/10)});
			\addplot3[name path=C,color=blue,domain=0:7,samples y=1] ({exp(-x^2/10)},{-exp(-x^2/10)}, {exp(-x^2/10)+x});
			
			\addplot3[name path=G, color=blue, domain=0:7, samples y=1, draw=none] ({exp(-x^2/10)},{-exp(-x^2/10)}, {exp(-x^2/10)});
			
			\addplot3[name path=D, variable=t, domain=0:7, draw=none]  (t,0, 0);
			\addplot3[dashed] coordinates {(0,0,0) (7,0,0)};
			\addplot3[name path=E, variable=t, domain=0:7]  (0,-t, 0);
			\addplot3[name path=F, variable=t, domain=0:7]  (0,0, t);
			\addplot3 [blue!30,fill opacity=0.5] fill between [of=A and D];
			\addplot3 [blue!30,fill opacity=0.5] fill between [of=B and E];
			\addplot3 [blue!30,fill opacity=0.5] fill between [of=C and G];
			
			\addplot3[name path=A, color=green, domain=0:7, samples y=1] ({7-exp(-x^2/10)-x},{exp(-x^2/10)-7}, {-exp(-x^2/10)+7});
			\addplot3[name path=B,color=green,domain=0:7,samples y=1] ({7-exp(-x^2/10)},{exp(-x^2/10)-7+x}, {-exp(-x^2/10)+7});
			\addplot3[name path=C,color=green,domain=0:7,samples y=1] ({7-exp(-x^2/10)},{exp(-x^2/10)-7}, {-exp(-x^2/10)+7-x});
			
			\addplot3[name path=G, domain=0:7, samples y=1, draw=none] ({7-exp(-x^2/10)},{exp(-x^2/10)-7}, {-exp(-x^2/10)+7});
						
			\addplot3[name path=D,variable=t,domain=0:7]  (7-t,-7,7);
			\addplot3[name path=E,variable=t,domain=0:7]  (7,-7+t,7);
			\addplot3[name path=F,variable=t,domain=0:7]  (7,-7,7- t);
			\addplot3 [green!30,fill opacity=0.5] fill between [of=A and D];
			\addplot3 [green!30,fill opacity=0.5] fill between [of=B and E];
			\addplot3 [green!30,fill opacity=0.5] fill between [of=C and G];
			
			\addplot3[dashed] coordinates {(7,0,0) (7,0,7)};
			\addplot3[dashed] coordinates {(7,0,0) (7,-7,0)};
			\addplot3[] coordinates {(0,-7,0) (0,-7,7)};
			\addplot3[] coordinates {(0,0,7) (7,0,7)};
			\addplot3[] coordinates {(0,0,7) (7,0,7)};
			\addplot3[] coordinates {(0,0,7) (0,-7,7)};
			\addplot3[] coordinates {(7,-7,0) (0,-7,0)};
			
		\end{axis}
	\end{tikzpicture}
	
	\caption{Transferring one set of trivalent cuts from the compactified cone over the hexagon, one gets the polytope of $(\mathbb{P}^1)^3$}
	\label{fig:compactQ61d}
\end{figure}
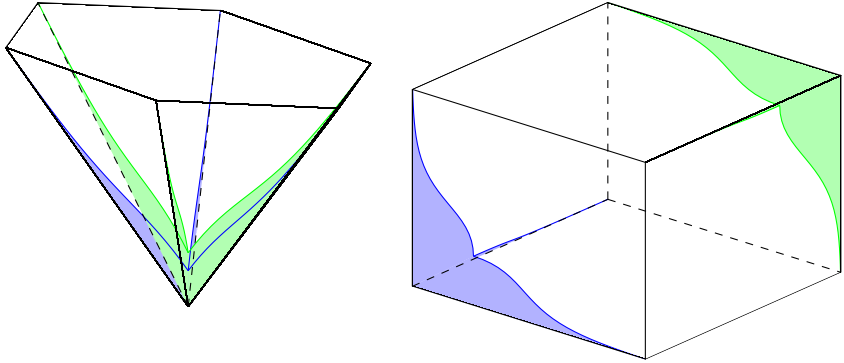

In the example discussed in Section \ref{seconddecompQ6}, it can be seen that $Y_{\widetilde{\sigma},\epsilon}$ compactifies to $Fl(3)$ in this case. One can identify the constructed Lagrangian fibration as a modification of the standard Gelfand-Cetlin polytope in the cases of $Fl(3)$ as indicated in Figure \ref{fig:compactQ62d}.
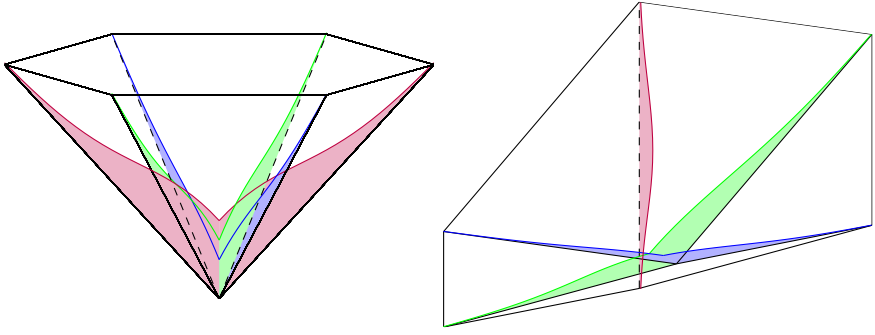
\begin{figure}
\begin{tikzpicture}
		\begin{axis}
			[scale=0.6,
			hide axis,
			zmin=0.0,
			ticks=none,
			enlargelimits=false,
			view={-45}{20}
			]
			\addplot3[name path=A,color=blue,domain=0:6,samples y=1] (0,x, {exp(-x^2/10)+x});
			\addplot3[name path=B,color=green,domain=0:6,samples y=1] (-x,0, {1.5*exp(-x^2/10)+x});
			\addplot3[name path=C,color=purple,domain=0:6,samples y=1] (x,-x, {2*exp(-x^2/10)+x});
			\addplot3[name path=D,variable=t,domain=0:6, draw=none]  (0,t, t);
			\addplot3[dashed] coordinates {(0,0,0) (0,6,6)};
			\addplot3[name path=E,variable=t,domain=0:6]  (-t,0, t);
			\addplot3[name path=F,variable=t,domain=0:6]  (t,-t, t);
			\addplot3 [blue!30,fill opacity=0.5] fill between [of=A and D];
			\addplot3 [green!30,fill opacity=0.5] fill between [of=B and E];
			\addplot3 [purple!30,fill opacity=0.5] fill between [of=C and F];
			
			\addplot3[name path=A,color=purple,domain=0:6,samples y=1] (-x,x, {2*exp(-x^2/10)+x});
			\addplot3[name path=B,color=blue,domain=0:6,samples y=1] (0,-x, {exp(-x^2/10)+x});
			\addplot3[name path=C,color=green,domain=0:6,samples y=1] (x,0, {1.5*exp(-x^2/10)+x});
			\addplot3[name path=D,variable=t,domain=0:6]  (-t,t, t);
			\addplot3[name path=E,variable=t,domain=0:6]  (0,-t, t);
			\addplot3[name path=F,variable=t,domain=0:6, draw=none]  (t,0, t);
			\addplot3[dashed] coordinates {(0,0,0) (6,0,6)};
			\addplot3 [purple!30,fill opacity=0.5] fill between [of=A and D];
			\addplot3 [blue!30,fill opacity=0.5] fill between [of=B and E];
			\addplot3 [green!30,fill opacity=0.5] fill between [of=C and F];
			
			\addplot3[variable=t, domain=0:6]  (t,6-t, 6);
			\addplot3[variable=t, domain=0:6]  (-t,6, 6);
			\addplot3[variable=t, domain=0:6]  (-6,6-t, 6);
			\addplot3[variable=t, domain=0:6]  (-6+t,-t, 6);
			\addplot3[variable=t, domain=0:6]  (t,-6, 6);
			\addplot3[variable=t, domain=0:6]  (6,-6+t, 6);
			
		\end{axis}
	\end{tikzpicture}
	\begin{tikzpicture}
		\begin{axis}
			[scale=0.6,
			hide axis,
			zmin=0.0,
			ticks=none,
			enlargelimits=false,
			view={-50}{10}
			]
			
			\addplot3[dashed, name path=purple1] coordinates {(0,0,0) (0,0,6)};
			\addplot3[] coordinates {(-2,0,0) (-2,0,2)};
			\addplot3[] coordinates {(-2,0,0) (0,0,0)};
			\addplot3[] coordinates {(0,0,6) (-2,0,2)};
			\addplot3[name path=blue1] coordinates {(-2,-1,2) (-2,0,2)};
			\addplot3[name path=green2] coordinates {(-2,-1,2) (-2,0,0)};
			\addplot3[] coordinates {(0,-1,6) (0,0,6)};
			\addplot3[name path=green1] coordinates {(0,-1,6) (-2,-1,2)};
			
			\addplot3[] coordinates {(0,-1,6) (0,-1,2)};
			\addplot3[] coordinates {(0,0,0) (0,-1,2)};
			\addplot3[name path=blue2] coordinates {(-2,-1,2) (0,-1,2)};
			
			\addplot3[name path=A, color=green, domain=0:1, samples y=1] ({-2+2*x+0.2*exp(-(2*x)^2)},{-1+0.2*exp(-(2*x)^2)}, {2+4*x});
			
			\addplot3[name path=B,color=green,domain=0:1,samples y=1] (-2+0.2*exp(-(2*x)^2),{-1+x+0.2*exp(-(2*x)^2)}, {2-2*x});
			
			\addplot3[name path=C, color=blue, domain=0:1,samples y=1] (-2+0.2*exp(-(2*x)^2),{-1+x+0.14*exp(-(2*x)^2)}, {2});
			
			\addplot3[name path=D, color=blue, domain=0:1,samples y=1] (-2+0.2*exp(-(2*x)^2)+2*x,{-1+0.14*exp(-(2*x)^2)}, {2});
			
			\addplot3[name path=E, color=purple, domain=0:1,samples y=1] ({-0.1*exp(-(3*(x-0.5))^2}, {-0.1*exp(-(3*(x-0.5))^2}, 6*x);
			
			\addplot3 [green!30,fill opacity=0.5] fill between [of=A and green1];
			\addplot3 [green!30,fill opacity=0.5] fill between [of=B and green2];
			\addplot3 [blue!30,fill opacity=0.5] fill between [of=C and blue1];
			\addplot3 [blue!30,fill opacity=0.5] fill between [of=D and blue2];
			\addplot3 [purple!30,fill opacity=0.5] fill between [of=E and purple1];
			
		\end{axis}
	\end{tikzpicture}

	\caption{Transferring one set of cuts from the compactified cone over the hexagon, one gets a modification of the standard Gelfand-Cetlin polytope in the cases of $Fl(3)$}
	\label{fig:compactQ62d}
\end{figure}

Also for $Y_{\widetilde{\sigma},\epsilon}$ we get that the potential \eqref{potentialQ62d} is:
\[\mathfrak{PO}=z_3(1+z_1+z_2+z_1^2z_2+z_1z_2^2+z_1^2z_2^2+2z_1z_2)\]
and for the corresponding Lagrangian $\mathcal{L}_a$ viewed in the compactification $\overline{Y_{\widetilde{\sigma},\epsilon}} \subseteq Fl(3)$ as a lift of the moment Lagrangian $\mathcal{L}'$ on the compactifying divisor $F_\infty$, the potential is 
$T^b\mathfrak{PO}+T^a(z_1z_2z_3)^{-1}$ where $(z_1z_2z_3)^{-1}$ corresponds to the fibre disk, passing through the compactifying fibre $F_{\infty}$, with area $a$. Similarly to the previous example, $(\mathfrak{PO}-2z_1z_2z_3)$ corresponds the term in the potential of $\mathcal{L}'$ and $2z_1z_2z_3$ is the additional term, where 2 is reinterpreted in \cite{diogoprep} as the number of $c_1=1$ rational curves in $Fl(3)$ passing transversally through a point in the divisor $F_{\infty}$.

\section{Future Developments}\label{future}

In this section, we mention some developments that arise naturally from this work.
\begin{enumerate}
	\item Lagrangian skeleton of $Y_{\widetilde{\sigma},\epsilon}$ and its relation to Gelfand-Cetlin fibrations.\\

The affine smooth quadric $Y_{\widetilde{\sigma},\epsilon} =T^*S^3$ arise as the smoothing of the singular quadric $Y_\sigma$, where $\sigma$ is the cone over the square with sides of length 1. The cone $\sigma^\vee$ describes a symplectic torus fibration of the singular quadric, but also the same cone describes a Gelfand-Cetlin fibration (in the sense of \cite{shelukhin2018geometry}) of    $Y_{\widetilde{\sigma},\epsilon}$ with a Lagrangian $S^3$ in the vertex. This fibration can be regarded as a limit of a family of restricted almost toric fibrations constructed in Section \ref{singlagfib}. We expect to generalize this example to obtain Gelfand-Cetlin fibrations (in the sense of \cite{shelukhin2018geometry}) as limits of the fibrations in Section \ref{singlagfib}. These Gelfand-Cetlin fibrations have the cone $\sigma^\vee$ as a convex base diagram, and the singular non-toric fibres are in the codimension $\geq 2$ faces. These varieties are affine, so they are Stein and Weinstein, and they have a Lagrangian skeleton such that $Y_{\widetilde{\sigma},\epsilon}$ deforms to it. We expect to describe the Lagrangian skeleton in terms of the Minkowski decomposition of $Q$, to describe a Weinstein structure of $Y_{\widetilde{\sigma},\epsilon}$ from the Gelfand-Cetlin fibration, and to compute symplectic homology and the wrapped Fukaya category for these spaces.
	\item Homological mirror symmetry for $Y_{\widetilde{\sigma},\epsilon}$.\\

We are interested in proving homological mirror symmetry results for $Y_\epsilon$ and $Y_\epsilon \setminus \pi^{-1}(1)$. This project can be divided into the following steps. First, construct the mirror following the SYZ approach in \cite{auroux2007mirror, auroux2009special}, or following the construction in \cite{abouzaid2016lagrangian} (see also \cite{chan2016lagrangian, chan2013homological, chan2012syz}). Second, study the Fukaya category and wrapped Fukaya category of $Y_\epsilon$. We expect to use our complex fibration $\pi$ in Theorem \ref{fibration} and results about Lefschetz fibrations \cite{seidel2008fukaya} to achieve this goal (see also \cite{abouzaid2020monotone}). Third, relate Lagrangians of the Fukaya category with holomorphic line bundles on the mirror using the results in \cite{arinkin1998fukaya, leung}.
\end{enumerate}

\bibliographystyle{amsalpha}
\bibliography{biblio}

\end{document}